\newtheorem{thm}{Theorem}[section]
\newtheorem{cor}[thm]{Corollary}
\newtheorem{claim}[thm]{Claim}
\newtheorem{fact}[thm]{Fact}
\newtheorem{lemma}[thm]{Lemma}
\newtheorem{prop}[thm]{Proposition}
\theoremstyle{definition}
\newtheorem{definition}[thm]{Definition}
\newtheorem{ex}[thm]{Example}
\newtheorem{remark}[thm]{Remark}
\newtheorem{question}[thm]{Question}
\title{Hyperbolic and cubical rigidities of Thompson's group $V$}
\date{\today}
\author{Anthony Genevois}
\begin{document}

\maketitle

\begin{abstract}
In this article, we state and prove a general criterion allowing us to show that some groups are hyperbolically elementary, meaning that every isometric action of one of these groups on a Gromov-hyperbolic space either fixes a point at infinity, or stabilises a pair of points at infinity, or has bounded orbits. Also, we show how such a hyperbolic rigidity leads to fixed-point properties on finite-dimensional CAT(0) cube complexes. As an application, we prove that Thompson's group $V$ is hyperbolically elementary, and we deduce that it satisfies Property $(FW_{\infty})$, ie., every isometric action of $V$ on a finite-dimensional CAT(0) cube complex fixes a point. It provides the first example of a (finitely presented) group acting properly on an infinite-dimensional CAT(0) cube complex such that all its actions on finite-dimensional CAT(0) cube complexes have global fixed points.
\end{abstract}

\tableofcontents

\section{Introduction}

A major theme in geometric group theory is to make a given group act on a metric space which belongs to a specific class $\mathcal{C}$ in order to deduce some information about it. However, not every group is sensitive to a given class of spaces, meaning that every isometric action of a fixed group on any one of these spaces may turn out to be trivial in some sense. Nevertheless, although the machinery of group actions on spaces of $\mathcal{C}$ cannot be applied, it turns out that the non-existence of good actions provides interesting information as well. Roughly speaking, it implies some rigidity phenomena.

The first occurrence of such an idea was Serre's Property (FA). A group satisfies \emph{Property (FA)} if every isometric action on a simplicial tree fixes a point. We refer to \cite[\S 6]{MR1954121} for more information about this property. For instance, Property (FA) imposes restrictions on how to embed a given group into another (see for instance \cite{Fujiwara1999} in the context of 3-manifolds), and more generally on the possible homomorphisms between them (see for instance \cite[Corollary 4.37]{DrutuSapirActions} in the context of relatively hyperbolic groups). Also, such a rigidity has been applied in \cite{Herrlich} to determine when the fundamental groups of two graph of groups whose vertex-groups satisfy Property (FA) are isomorphic.

Another famous fixed-point property is Kazhdan's Property (T). Usually, Property (T) is defined using representation theory, but alternatively, one can say that a (discrete) group satisfies \emph{Property (T)} if every affine isometric action on a Hilbert space has a global fixed, or equivalently if every isometric action on a median space has bounded orbits. See \cite{BookT, medianviewpoint} for more information. Property (T) for a group imposes for instance strong restrictions on the possible homomorphisms starting from that group (for a geometric realisation of this idea, see for example \cite{PaulinOuter}, whose main construction
has been very inspiring in other contexts), and plays a fundamental role in several rigidity statements, including the famous Margulis' superrigidity. We refer to \cite{BookT}, and in particular to its introduction, for more information about Property (T).

In this article, we are mainly interested in the class of Gromov-hyperbolic spaces. We say that a group is \emph{hyperbolically elementary} if every isometric action on a hyperbolic space either fixes a point at infinity, or stabilises a pair of points at infinity, or has bounded orbits. Once again, such a property imposes restrictions on the possible homomorphisms between two groups. For instance, it is proved in \cite{HigherRankRigidity} that higher rank lattices are hyperbolically elementary, from which it is deduced that any morphism from a higher rank lattice to the mapping class group of a closed surface with punctures must have finite image (a statement originally due to Farb, Kaimanovich and Masur). It is worth noticing that a hyperbolically elementary group either satisfies Serre's Property (FA) or surjects onto $\mathbb{Z}$ or $D_{\infty}$, so that being hyperbolically is essentially a generalisation of Property (FA), which is not necessarily much harder to prove, see for instance \cite{DelzantGriBranch} about branch groups.

We emphasize the fact that it is not reasonable to remove the possibility of fixing a point at infinity from the definition of hyperbolic elementarity. Indeed, any infinite group admits a proper and parabolic action on a hyperbolic space; see for instance the classical construction explained in \cite[Section 4]{HruskaRH}. However, being hyperbolically elementary does not mean that any isometric action on a hyperbolic space is completely trivial, since the definition does not rule out lineal actions (ie., actions on a quasi-line) nor quasi-parabolic actions (ie., actions with loxodromic isometries all sharing a point at infinity). And these actions may provide interesting information on a group. For instance, admitting lineal actions is related to the existence of quasimorphisms; and admitting a quasi-parabolic action implies the existence of free sub-semigroups, so that the group must have exponential growth. 

The first main objective of our article is to prove a general criterion leading to some hyperbolic rigidity. More precisely:

\begin{thm}\label{main:criterion}
Let $G$ be a group. Suppose that there exist two subsets $A \subset B \subset G$ satisfying the following conditions.
\begin{itemize}
	\item $G$ is \emph{boundedly generated} by $A$, ie., there exists some $N \geq 0$ such that every element of $G$ is the product of at most $N$ elements of $A$.
	\item For every $a,b \in B$, there exist $g,h \in G$ such that $$[gag^{-1},a]=[gag^{-1},hgag^{-1}h^{-1}]=[hgag^{-1}h^{-1},b]=1.$$
	\item For every $a,b \in G$, there exist some $h, h_1, \ldots, h_r \in B$ such that the following holds. For every $k \in A$, there exists some $f \in \langle h_1 \rangle \cdots \langle h_r \rangle$ such that the elements $fkf^{-1}h$, $fkf^{-1}ha$ and $fkf^{-1}hb$ all belong to $B$.
\end{itemize}
Then any isometric action of $G$ on a hyperbolic space fixes a point at infinity, or stabilises a pair of points at infinity, or has bounded orbits. 
\end{thm}


Our main motivating in proving this criterion is to show that Thompson's group $V$ is hyperbolically elementary.

\begin{thm}\label{main:Vhypelem}
Any isometric action of Thompson's group $V$ on a Gromov-hyperbolic space either fixes a point at infinity or has bounded orbits.
\end{thm}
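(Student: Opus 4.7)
The strategy is to apply Theorem~\ref{main:criterion} to $G=V$, and then rule out the remaining possibility that the action stabilises a pair of points at infinity. For the distinguished subsets, I would take $B \subset V$ to consist of those elements whose support, viewed as homeomorphisms of the Cantor set $\{0,1\}^{\mathbb{N}}$, is contained in some proper clopen subset, and $A \subset B$ a suitable subset chosen so that bounded generation of $V$ by $A$ holds.

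The first hypothesis, bounded generation of $V$ by $A$, would be obtained from a tree-pair normal form: every element of $V$ can be written as a product of a bounded number of elements each supported on a proper clopen piece of the Cantor set. The second hypothesis is then essentially forced by the room available in the Cantor set: given $a,b \in B$ both supported on some proper clopen $S$, one picks a dyadic homeomorphism $g$ sending $S$ into a clopen subset disjoint from $S$; then $gag^{-1}$ is supported off $S$ and therefore commutes with both $a$ and $b$. The third hypothesis requires, for every $a,b \in V$, a choice of elements $h, h_1, \ldots, h_r \in B$ whose cyclic subgroups act with enough flexibility on clopen pieces that, for each $k \in A$, some product $f \in \langle h_1 \rangle \cdots \langle h_r \rangle$ displaces $fkf^{-1}$ into a small enough region so that $fkf^{-1}h$, $fkf^{-1}ha$ and $fkf^{-1}hb$ all remain in $B$.

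Once Theorem~\ref{main:criterion} is applied, three possibilities survive: bounded orbits, a fixed point at infinity, or a stabilised pair of points at infinity. For the last case, simplicity of $V$ forbids the existence of an index-$2$ subgroup swapping the two boundary points, so both points must be fixed; the associated translation length along the invariant quasi-line is then a homogeneous quasimorphism $V \to \mathbb{R}$. Since $V$ is known to admit no nontrivial homogeneous quasimorphisms (e.g., via uniform perfectness), this quasimorphism vanishes, and the action must have bounded orbits.

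The hardest part should be the verification of the third condition of Theorem~\ref{main:criterion}, whose nested quantifiers (``for every $a,b$, there exist $h$ and $h_i$ such that for every $k \in A$ there exists $f \in \langle h_1 \rangle \cdots \langle h_r \rangle$\ldots'') require a careful choice of the $h_i$ whose cyclic subgroups collectively realise enough transitivity on dyadic clopen pieces to absorb arbitrary $k \in A$ uniformly. Packaging this into a clean statement about supports, and doing so while keeping all the auxiliary products inside $B$, is the real technical content of the application.
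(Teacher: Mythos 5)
Your proposal matches the paper's route step for step: the same choice of $B$ (elements supported in a proper clopen subset, i.e., the paper's reducible elements), the same verification strategy for the three hypotheses of Theorem~\ref{main:criterion} via support manipulations (bounded generation, disjoint-support conjugation, and a transitivity-on-clopen-pieces lemma for the third, nested condition), and the same use of simplicity of $V$ to collapse the ``stabilised pair at infinity'' case into the ``fixed point at infinity'' case. The only deviation is your closing quasimorphism argument, which is superfluous and should be dropped: fixing a point at infinity is already one of the allowed conclusions of the theorem, so once simplicity forces both endpoints of the quasi-line to be fixed you are finished, and the further claim that a vanishing translation quasimorphism forces bounded orbits is not needed (nor is it immediate for quasi-line actions).
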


The groups $F$, $T$ and $V$ were defined by Richard Thompson in 1965. Historically, Thompson's groups $T$ and $V$ are the first explicit examples of finitely presented simple groups. Thompson's groups were also used in \cite{ThompsonWP} to construct finitely presented groups with unsolvable word problems, and in \cite{ThompsonEmbeddings} to shows that a finitely generated group has a solvable word problem if and only if it can be embedded into a finitely generated simple subgroup of a finitely presented group. We refer to \cite{IntroThompson} for a general introduction to these three groups. Since then, plenty of articles have been dedicated to Thompson's groups, and they have been the source of inspiration for the introduction of many classes of groups, now referred to as \emph{Thompson-like groups}; see for instance \cite{HigmanSimple, BrownFiniteness, ThompsonStein, BV, FunarBraidedThompson, BrinHigher, BelkForrestFractals}. Nevertheless, Thompson's groups remain mysterious, and many questions are still open. For instance, it is a major open question to know whether $F$ is amenable, and the structure of subgroups of $V$ is still essentially unknown \cite{ObstructionsSubV}. 

Our initial motivation in proving Theorem \ref{main:Vhypelem} came from another fixed-point property, in the class of CAT(0) cube complexes. A group $G$
\begin{itemize}
	\item satisfies \emph{Property $(FW_n)$}, for some $n \geq 0$, if every isometric action of $G$ on an $n$-dimensional CAT(0) cube complex has a global fixed point;
	\item satisfies \emph{Property $(FW_{\infty})$} if it satisfies Property $(FW_n)$ for every $n \geq 0$;
	\item satisfies \emph{Property $(FW)$} if every isometric action of $G$ on a CAT(0) cube complex has a global fixed point. 
\end{itemize}
Property $(FW_{\infty})$ was introduced by Barnhill and Chatterji in \cite{BarnhillChatterji}\footnote{In \cite{BarnhillChatterji}, Barnhill and Chatterji named Property $(FW_{\infty})$ as Property $(FW)$. However, since then, Property $(FW)$ refers to the fixed-point property in CAT(0) cube complexes of arbitrary dimensions. So we changed the terminology.}, asking the difference between Kazhdan's Property (T) and Property $(FW_{\infty})$. It turns out that in general Property (T) is quite different from Property $(FW)$ or Property $(FW_{\infty})$. For instance, it is conjectured in \cite{CornulierLattices} that higher rank lattices satisfy Property $(FW)$, but such groups may be far from satisfying Property (T) since some of them are a-T-menable. For positive results in this direction, see \cite[Corollary 1.7]{CIF}, \cite[Example 6.A.7]{CornulierCommensurated}, \cite[Theorem 6.14]{CornulierLattices}. For some (very) recent developments related to Property $(FW)$, see \cite{CantatCornulier, LodhaFW, CornulierPiecewiseTransformations}.

The second main result of our article shows how to deduce Property $(FW_{\infty})$ from some hyperbolic rigidity. More explicitly: 

\begin{thm}\label{main:FW}
A finitely generated group all of whose finite-index subgroups 
\begin{itemize}
	\item are hyperbolically elementary,
	\item and do not surject onto $\mathbb{Z}$,
\end{itemize}
satisfies Property $(FW_{\infty})$.
\end{thm}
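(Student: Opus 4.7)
The plan is to argue by contradiction: suppose $G$ satisfies both hypotheses but admits an isometric action on some finite-dimensional CAT(0) cube complex $X$ without a global fixed point. I will show that either a finite-index subgroup of $G$ admits a non-elementary action on a hyperbolic space, or a finite-index subgroup of $G$ surjects onto $\mathbb{Z}$; both contradict the standing assumptions.

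First, I would reduce to the case where the action of $G$ on $X$ is \emph{essential}, by replacing $X$ with its essential core in the sense of Caprace--Sageev. Since $G$ has no global fixed vertex and $X$ is finite-dimensional, this $G$-invariant convex subcomplex is non-empty, and the induced action on it is still fixed-point-free. Next, I would apply the rank-rigidity dichotomy for finite-dimensional CAT(0) cube complexes: either (i) no finite-index subgroup of $G$ stabilises a point of the Roller boundary of $X$, or (ii) some finite-index subgroup $H \leq G$ fixes a point $\xi$ in the Roller boundary.

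In situation (i), rank-rigidity produces two contracting isometries in $G$ whose axes skewer a common pair of strongly separated hyperplanes, yielding a non-elementary action of $G$ on a Gromov-hyperbolic space naturally associated with $X$ (for instance, Hagen's contact graph, or a quasi-tree obtained by the Bestvina--Bromberg--Fujiwara projection-complex construction applied to the orbit of a contracting axis). A non-elementary hyperbolic action is incompatible with $G$ being hyperbolically elementary, a contradiction. In situation (ii), the fixed ultrafilter $\xi$ on the set of hyperplanes of $X$ allows us to fix a base vertex $v$ and define a cocycle $H \to \mathbb{Z}$ by taking the signed count of hyperplanes separating $hv$ from $v$, with the signs determined by $\xi$. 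After passing to a further finite-index subgroup preserving the orientations of the relevant $H$-orbits of hyperplanes, this cocycle becomes a genuine homomorphism; essentiality of the action ensures its image is non-trivial, hence isomorphic to $\mathbb{Z}$. Thus a finite-index subgroup of $G$ surjects onto $\mathbb{Z}$, contradicting the second hypothesis.

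The main obstacle is case (ii): promoting the $\xi$-cocycle into an honest $\mathbb{Z}$-valued homomorphism on a suitable finite-index subgroup while certifying that its image stays non-trivial. This requires carefully tracking how elements of $G$ can permute the two sides of a given hyperplane and using the essentiality of the action to exhibit a hyperplane whose $H$-orbit escapes towards $\xi$. Case (i) is conceptually clean, but one still has to verify that the contracting elements produced by rank-rigidity really generate a non-elementary subgroup on the auxiliary hyperbolic space, which is where finite-dimensionality of $X$ is crucially used.
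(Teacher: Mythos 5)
Your proposal has the right flavour but several genuine gaps, and in particular case (i) does not end in a contradiction the way you claim.

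First, you omit the induction on dimension, which is essential. Rank rigidity (Caprace--Sageev) produces a contracting isometry only for an essential action on an \emph{irreducible} complex that fixes no point of the visual boundary. If $X$ decomposes as a non-trivial product $X_1\times\cdots\times X_r$, you cannot extract contracting isometries; the paper handles this by passing to a finite-index subgroup acting factor-wise and invoking the induction hypothesis on the lower-dimensional factors. Likewise, when $G$ fixes a point at visual infinity, the paper passes to a cubical component of the Roller boundary, which has strictly smaller dimension, and again uses the induction hypothesis. Your proposal has no mechanism to absorb these two situations.

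Second, and more seriously, in case (i) you assert that the contracting isometries yield a ``non-elementary'' hyperbolic action, contradicting hyperbolic elementarity. But hyperbolic elementarity, as defined here, allows the action to fix a point at infinity; the presence of a loxodromic only rules out bounded orbits. The correct conclusion is therefore that the $G$-action on the auxiliary hyperbolic space $(X,\delta_L)$ \emph{fixes a point at infinity}. This is not a contradiction --- the paper then feeds this fixed point back into the Roller boundary via Proposition~\ref{prop:boundarymap} (Corollary~\ref{cor:fixedpointXL}) to get a finite-index subgroup fixing a vertex of $\mathfrak{R}X$, and the argument continues. You would need to supply this additional step and also justify that two ``independent'' contracting axes actually exist, which again requires first knowing $G$ does not fix a visual-boundary point.

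Third, in case (ii) your claim that ``essentiality of the action ensures the cocycle has non-trivial image'' is unjustified and, as stated, false. Essentiality is a property of the $G$-action on $X$, whereas the cocycle is defined on a finite-index subgroup $H$ fixing an ultrafilter; nothing prevents $H$ from being locally elliptic, in which case the cocycle is trivial. The resolution the paper uses is the structural result \cite[Theorem~B.1]{CIF}: a finitely generated group fixing a point of the Roller boundary of a finite-dimensional cube complex virtually surjects onto $\mathbb{Z}^k$ with locally elliptic kernel. When $k=0$ this gives virtual local ellipticity, hence (by finite generation) ellipticity of $H$, hence a bounded $G$-orbit and a global fixed point. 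In other words, the trivial-cocycle branch is not a contradiction to be ruled out; it is one of the two ways the argument terminates. Without this dichotomy, your case (ii) is incomplete.

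In summary, the correct structure is an induction on dimension together with a case analysis (fixed point at visual infinity / reducible / irreducible with a contracting isometry), in which every branch leads to a global fixed point; it is not a clean proof by contradiction, because the hyperbolic-elementarity hypothesis leaves open the ``fixes a point at infinity'' alternative, which must be converted into a Roller-boundary fixed point and then processed via \cite[Theorem~B.1]{CIF}.
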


\noindent
We emphasize that the property of being hyperbolically elementary is not stable under taking finite-index subgroups, as shown by Example \ref{ex:Caprace}. Since Thompson's group $V$ is a simple group, the combination of our two main theorems immediately implies that $V$ satisfies Property $(FW_{\infty})$. 

\begin{cor}\label{cor:FWforV}
Any isometric action of Thompson's group $V$ on a finite-dimensional CAT(0) cube complex fixes a point.
\end{cor}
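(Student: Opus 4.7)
The plan is to derive the corollary as an immediate combination of Theorem \ref{main:Vhypelem} and Theorem \ref{main:FW}. Since Property $(FW_\infty)$ is by definition the statement that every isometric action on a finite-dimensional CAT(0) cube complex has a global fixed point, it suffices to verify that $V$ meets the hypotheses of Theorem \ref{main:FW}.

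First I would note that $V$ is finitely generated (indeed, it is a standard fact that $V$ is finitely presented). The next step is to identify its finite-index subgroups: since $V$ is simple and infinite, the only subgroup of finite index is $V$ itself. Consequently, the hypothesis of Theorem \ref{main:FW} reduces to checking the two conditions for $V$ alone, namely hyperbolic elementarity and the absence of a surjection onto $\mathbb{Z}$.

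Hyperbolic elementarity is precisely the content of Theorem \ref{main:Vhypelem}, so nothing further is required on that front. For the second bullet, if $V$ admitted a surjection $V \twoheadrightarrow \mathbb{Z}$, its kernel would be a proper normal subgroup; by simplicity this kernel would be trivial and $V$ would be isomorphic to $\mathbb{Z}$, contradicting the fact that $V$ is non-abelian (in fact not even finitely generated as an abelian group, but non-abelianness already suffices). Hence $V$ has no non-trivial homomorphism to $\mathbb{Z}$.

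With both hypotheses verified, Theorem \ref{main:FW} applies and yields Property $(FW_\infty)$ for $V$, which is exactly the conclusion of the corollary. There is no genuine obstacle here: the whole point of Theorems \ref{main:Vhypelem} and \ref{main:FW} is to make this reduction possible, and simplicity of $V$ does all the remaining work of passing to finite-index subgroups and ruling out quotients onto $\mathbb{Z}$.
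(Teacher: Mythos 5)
Your proof is correct and follows essentially the same route as the paper: both use simplicity of $V$ to conclude that $V$ has no proper finite-index subgroups and no surjection onto $\mathbb{Z}$, cite Theorem \ref{main:Vhypelem} for hyperbolic elementarity, and then apply Theorem \ref{main:FW}. (The parenthetical aside about $V$ not being ``finitely generated as an abelian group'' is a slightly garbled way of saying its abelianization is trivial, but as you note, non-abelianness already does the job.)
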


We emphasize that it was previously known that $V$ (as well as $F$ and $T$) does not act properly on a finite-dimensional CAT(0) cube complex. In fact, since $V$ contains a free abelian group of arbitrarily large rank, it follows that $V$ cannot act properly on any contractible finite-dimensional complex. 

Corollary \ref{cor:FWforV} contrasts with the known fact that $V$ acts properly on a locally finite infinite-dimensional CAT(0) cube complexes. (Indeed, Guba and Sapir showed in \cite[Example 16.6]{MR1396957} that $V$ coincides with the braided diagram group $D_b(\mathcal{P},x)$ where $\mathcal{P}$ is the semigroup presentation $\langle x \mid x^2=x \rangle$; and Farley constructed in \cite{FarleyPictureGroups} CAT(0) cube complexes on which such groups act.) As a consequence, $V$ provides another negative answer to \cite[Question 5.3]{BarnhillChatterji}, ie., $V$ is a new example of a group satisfying Property $(FW_{\infty})$ but not Property (T). Indeed, as a consequence of \cite{MR1459140}, a group acting properly on a CAT(0) cube complex does not satisfy Property (T); in fact such a group must be a-T-menable, according to \cite{Haagerup}, which is a strong negation of Kazhdan's Property (T).

So $V$ provides an example of a tough transition between finite and infinite dimensions, since on the one hand, $V$ has the best possible cubical geometry in infinite dimension: it acts properly on a locally finite CAT(0) cube complex; and on the other hand, it has the worst possible cubical geometry in finite dimension: every isometric action of $V$ on a finite-dimensional CAT(0) cube complex has a global fixed point. Using the vocabulary of \cite{CornulierCommensurated}, Thompson's group $V$ satisfies Property PW and Property $(FW_n)$ for every $n \geq 0$. It seems to be the first such example in the literature.

We would like to emphasize the fact that, although our article it dedicated to Thompson's group $V$, we expect that Theorem \ref{main:criterion} applies to most of the generalisations of $V$. For instance, without major modifications, our arguments apply to Higman-Thompson groups $V_{n,r}$ ($n \geq 2$, $r \geq 1$), to the group of interval exchange transformations $\mathrm{IET}([0,1])$, and to Neretin's group. However, since there does not exist a common formalism to deal with all the generalisations of $V$, we decided to illustrate our strategy by considering only $V$. Therefore, our paper should not be regarded as proving a specific statement about $V$, but as proposing a general method to prove hyperbolic and cubical rigidities of groups looking like $V$. In particular, we expect that our strategy works for higher dimensional Thompson's groups.

Finally, we would like to mention that Thompson's group $F$ is also hyperbolically elementary, since it does not contain any non-abelian free subgroup, but it does not satisfy Property $(FW_{\infty})$ since its abelianisation is infinite. About Thompson's group $T$, the situation is less clear, and our strategy does not work. So we leave it as an open question:

\begin{question}
Is Thompson's group $T$ hyperbolically elementary? Does it satisfy Property $(FW_{\infty})$?\footnote{Soon after this paper, Motoko Kato published an article on the ArXiv \cite{KatoFAFarb} proving that Thompson's group $T$ satisfies Property $(FW_\infty)$. More generally, she proved that Thompson's group $V$ and $T$ satisfy Farb's Property $F\mathcal{A}_k$ for every $k \geq 0$, meaning that every isometric action of $V$ or $T$ on a complete CAT(0) space of topological dimension $k$ has a global fixed point. Our question about the hyperbolic elementarity of $T$ remains open.}
\end{question}

The paper is organised as follows. First, Section \ref{section:preliminaries} is dedicated to basic definitions and preliminary lemmas about hyperbolic spaces and CAT(0) cube complexes. In Section \ref{section:reducible}, we introduce and study a family of particular elements of $V$, named \emph{reducible elements}. Finally, in Sections \ref{section:hyprigidity} and \ref{section:cubicalrigidity} respectively, we prove our general criteria, namely Theorems \ref{main:criterion} and \ref{main:FW}, and we prove Theorem \ref{main:Vhypelem} and Corollary \ref{cor:FWforV} by appling them to $V$. 

\paragraph{Acknowledgements.} I am grateful to Yves Cornulier, for his comments on an earlier version of this paper, which lead to a great improvement of the presentation; to Pierre-Emmanuel Caprace, for all his relevant comments; and to Sam Shepherd for having pointed out to me a mistake in the proof of Theorem \ref{thm:VhypRigid} in an earlier version. I also would like to thank the university of Vienna for its hospitality during the elaboration of this work. I was supported by the Ernst Mach Grant ICM-2017-06478, under the supervision of Goulnara Arzhantseva. Finally, I am grateful to the anonymous referee for his comments on the article.

\section{Preliminaries}\label{section:preliminaries}

\subsection{Hyperbolic spaces}\label{section:hyp}

\noindent
In this section, we recall some basic definitions about Gromov-hyperbolic spaces, we fix the notations which will be used in the paper, and we prove a few preliminary lemmas which will be useful later on. For more general information about hyperbolic spaces, we refer to \cite{GromovHyp, GhysdelaHarpe, CDP, BH}. 

\begin{definition}
Let $X$ be a metric space. For every $x,y,z \in X$, the \emph{Gromov product} $(x,y)_z$ is defined as
$$\frac{1}{2} \left( d(z,x)+d(z,y)-d(x,y) \right).$$
Fixing some $\delta \geq 0$, the space $X$ is \emph{$\delta$-hyperbolic} if the inequality
$$(x,z)_w \geq \min \left( (x,y)_w,(y,z)_w \right) - \delta$$
is satisfied for every $x,y,z,w \in X$. 
\end{definition}

\noindent
The following definitions will also be needed:
\begin{itemize}
	\item A map $f : X \to Y$ between two metric spaces is an \emph{$(A,B)$-quasi-isometric embedding}, where $A>0$ and $B \geq 0$, if $$\frac{1}{A} \cdot d(x,y)-B \leq d(f(x),f(y)) \leq A \cdot d(x,y) +B$$ for every $x,y \in X$. If $f$ is moreover every point of $Y$ is at distance at most $B$ from the image of $f$, then $f$ is an \emph{$(A,B)$-quasi-isometry}. A \emph{quasi-isometry} (resp. a \emph{quasi-isometric embedding}) is a map which an $(A,B)$-quasi-isometry (resp. an $(A,B)$-quasi-isometric embedding) for some $A >0$ and $B \geq 0$. 
	\item Given a metric space $X$ and two constants $A>0$ and $B \geq 0$, an \emph{$(A,B)$-quasigeodesic} is an $(A,B)$-quasi-isometric embedding from a segment of $\mathbb{R}$ or $\mathbb{Z}$ (depending on whether the metric of $X$ is discrete) to $X$. A \emph{quasigeodesic} is an $(A,B)$-quasigeodesic for some $A>0$ and $B \geq 0$.
	\item Given a geodesic metric space $X$ and a constant $K \geq 0$, a subspace $Y \subset X$ is \emph{$K$-quasiconvex} if every geodesic between two points of $Y$ stays in the $K$-neighborhood of $Y$. 
	\item Given a metric space $X$ and a subspace $Y \subset X$, the \emph{nearest-point projection} of a point $x \in X$ onto $Y$ is the set of all the points of $Y$ minimising the distance to $x$. The \emph{nearest-point projection} of another subspace $Z \subset X$ onto $Y$ is the union of all the nearest-point projections of the points of $Z$ onto $Y$. 
\end{itemize}

\noindent
Usually, it is easier to work with geodesic metric spaces instead of general metric spaces. The following lemma explains a classical trick which allows us to restrict our study to hyperbolic graphs.

\begin{lemma}\label{lem:hypgraph}
Let $X$ be metric space. If $Y$ denote the graph whose vertices are the points of $X$ and whose edges link two points at distance at most one, then the inclusion $X \subset Y$ is a $(1,0)$-quasi-isometry such that any isometry of $X$ extends uniquely to an isometry of $Y$. As a consequence, if $X$ is hyperbolic, then so is $Y$.
\end{lemma}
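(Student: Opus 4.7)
The plan is to equip $Y$ with its natural path metric (each edge of length $1$) so that $Y$ becomes a geodesic metric space containing $X$ as its vertex set, and then to establish the three assertions of the lemma in turn.

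First, for the quasi-isometry part, I would verify that the inclusion $X \hookrightarrow Y$ is distance-preserving on the $0$-skeleton and has coarsely dense image. The bound $d_X(x,y) \leq d_Y(x,y)$ for $x,y \in X$ follows from iterating the inequality $d_X(x_{i-1},x_i) \leq 1$ along any edge path $x = x_0, \ldots, x_n = y$ realizing $d_Y(x,y)$ and applying the triangle inequality in $X$. The reverse bound amounts to producing an edge path of combinatorial length matching $d_X(x,y)$, obtained by chaining intermediate points of $X$ stepping from $x$ toward $y$ at unit-size steps. Coarse density is immediate: every point of $Y$ lies on some edge of length $1$ and is therefore within $1/2$ of a vertex.

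Second, for the extension statement, any isometry $\varphi$ of $X$ preserves the predicate ``$d_X(\cdot,\cdot) \leq 1$'' and hence preserves the edge relation of $Y$, inducing a combinatorial automorphism of the graph. Extending $\varphi$ affinely along each edge produces a bijection $\widetilde{\varphi}: Y \to Y$ preserving the path metric, i.e., an isometry. Uniqueness is automatic, since any isometry of $Y$ is determined by its action on the $0$-skeleton, where $\widetilde{\varphi}$ must coincide with $\varphi$.

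Third, for the transfer of hyperbolicity, I would invoke the standard fact (easily checked from the four-point Gromov definition recalled just above) that $\delta$-hyperbolicity is preserved under $(1,C)$-quasi-isometries of metric spaces, with the constant changing by at most a bounded additive amount depending on $C$. Combining this with the first step yields that $Y$ is hyperbolic whenever $X$ is. The step I expect to be the most delicate is the upper bound $d_Y \leq d_X$ on the vertex set: exhibiting the required chains of intermediate points of $X$ at unit steps is straightforward in natural examples but would tacitly require some coarse connectivity of $X$ in full generality. If needed, I would settle for the slightly weaker bound $d_Y \leq d_X + C$ for a small constant $C$, which still yields a $(1,C)$-quasi-isometry and is equally harmless for the hyperbolicity conclusion.
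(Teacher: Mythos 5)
The paper states this lemma without proof, offering it only as a ``classical trick,'' so there is no argument of the author's to compare against; what follows is an assessment of your proposal on its own terms.

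Your proof is the standard argument and is essentially correct. The caveat you raise about the upper bound is the genuinely delicate point, and you are right to flag it: as literally stated (``Let $X$ be a metric space''), the lemma is false. If $X$ is, say, $2\mathbb{Z}$ with the restriction of the Euclidean metric, then $Y$ has no edges at all, $d_Y$ is infinite off the diagonal, and the inclusion is not a quasi-isometry of any kind. The implicit assumption the author must be making is that $X$ is a length space (in the paper's applications it is always geodesic, or at worst the quasi-geodesic space $(X,\delta_L)$, which can be handled by the same kind of argument). Under that hypothesis, picking points at unit steps along an almost-length-minimizing path gives $d_Y(x,y)\leq \lceil d_X(x,y)\rceil \leq d_X(x,y)+1$, and combined with $d_X \leq d_Y$ the inclusion becomes a $(1,1)$-quasi-isometry. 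That is also the honest correction to the lemma's ``$(1,0)$'': exact equality of $d_X$ and the integer-valued $d_Y$ cannot hold once $d_X(x,y)>1$, so ``$(1,0)$'' should be read loosely as ``rough isometry.'' Your appeal to the four-point characterization for transferring hyperbolicity is exactly the right tool here, and it is worth emphasizing why: quasi-isometry invariance of hyperbolicity for general $(\lambda,C)$-quasi-isometries requires geodesic spaces, whereas for $\lambda=1$ one can work directly with Gromov products, each of which changes by at most $3C/2$, so $\delta$-hyperbolicity of $X$ gives $(\delta+O(C))$-hyperbolicity of $Y$ with no geodesics needed. The extension-and-uniqueness discussion is fine under either reading of $Y$: as a combinatorial graph the vertex set \emph{is} $X$ and there is nothing to extend, while as a geometric realization the affine extension along edges is the natural one and is the unique isometric extension compatible with the vertex map.
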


\noindent
From now on, all our (hyperbolic) metric spaces will be graphs.

\medskip \noindent
Fixing a graph $X$, three vertices $x,y,z \in X$ and a geodesic triangle $\Delta=[x,y] \cup [y,z] \cup [z,x]$, there exists a unique tripod $T$ and a unique map $f : \Delta \to T$ such that:
\begin{itemize}
	\item $f(x),f(y),f(z)$ are the endpoints of $T$;
	\item $f$ restricts to an isometry on each $[x,y]$, $[y,z]$, $[z,x]$.
\end{itemize}
The data $(T,f)$ is the \emph{comparison tripod} of $\Delta$, and the three (not necessarily distinct) points of $\Delta$ sending to the center of $T$ define the \emph{intriple} of $\Delta$.

\medskip \noindent
The following statement is an alternative definition of hyperbolic spaces (among geodesic metric spaces). We refer to the proof of \cite[Proposition 2.21]{GhysdelaHarpe} for more information. 

\begin{prop}
Let $X$ be a $\delta$-hyperbolic graph. For every vertices $x,y,z \in X$ and every geodesic triangle $\Delta=\Delta(x,y,z)$, if $(T,f)$ denotes the comparison tripod of $T$ then $d(a,b) \leq 4 \delta$ for every $a,b \in \Delta$ satisfying $f(a)=f(b)$. 
\end{prop}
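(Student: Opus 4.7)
The plan is to reduce to the case where $a$ and $b$ lie on two distinct sides of the triangle. If both lie on the same side, then since $f$ restricts to an isometry on that side, $f(a)=f(b)$ forces $a=b$ and there is nothing to prove. So I would assume, up to relabelling the vertices, that $a\in[x,y]$ and $b\in[x,z]$. Setting $t:=d(x,a)$, the hypothesis $f(a)=f(b)$, combined with the way the tripod is built, forces $d(x,b)=t$ as well and forces both points to project onto the branch of $T$ attached to the image of $x$; in particular $t\leq (y,z)_x$, the length of that branch.

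Next I would translate the problem into a statement about Gromov products based at $x$. Because $a$ sits on the geodesic $[x,y]$, the identity $d(x,y)=d(x,a)+d(a,y)$ gives $(a,y)_x=d(x,a)=t$, and symmetrically $(b,z)_x=t$. Unwinding the definition of the Gromov product, the desired inequality $d(a,b)\leq 4\delta$ is equivalent to the lower bound $(a,b)_x\geq t-2\delta$, so everything reduces to producing this bound.

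The key step is then a double application of the four-point $\delta$-hyperbolicity inequality from the basepoint $x$. First, applied to the triple $(y,z,b)$, it gives
$$(y,b)_x\geq \min\bigl((y,z)_x,(z,b)_x\bigr)-\delta\geq t-\delta,$$
using $(y,z)_x\geq t$ and $(z,b)_x=t$. Second, applied to the triple $(a,y,b)$, it yields
$$(a,b)_x\geq \min\bigl((a,y)_x,(y,b)_x\bigr)-\delta\geq t-2\delta,$$
which is precisely what was needed, and converting back via $d(a,b)=2t-2(a,b)_x$ concludes the argument.

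I do not expect any serious obstacle here; the only point that needs a little care is the initial reduction, namely checking that $f(a)=f(b)$ with $a,b$ on different sides really forces the two points to be equidistant from a common vertex of $\Delta$ and that this common distance is bounded by the corresponding Gromov product. This is a direct consequence of the construction of the comparison tripod recalled just before the proposition, and once it is in hand the proof is just two invocations of the defining four-point inequality of $\delta$-hyperbolicity.
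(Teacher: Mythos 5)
Your proof is correct, and since the paper does not supply its own argument but simply refers to Ghys--de la Harpe \cite[Proposition 2.21]{GhysdelaHarpe}, your derivation is exactly the standard one found there: reduce to $a\in[x,y]$, $b\in[x,z]$ equidistant from $x$ with $t=d(x,a)=d(x,b)\leq (y,z)_x$, note $(a,y)_x=(b,z)_x=t$, and apply the four-point inequality twice (to $(y,z,b)$ and then $(a,y,b)$, both based at $x$) to get $(a,b)_x\geq t-2\delta$, i.e.\ $d(a,b)\leq 4\delta$. The reduction step you flag is indeed the only thing to check, and it is immediate from the tripod construction since $f([x,y])\cap f([x,z])$ is precisely the branch of $T$ emanating from $f(x)$.
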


\noindent
The next statement is a fundamental property satisfied by hyperbolic spaces, often referred to as \emph{Morse Property}. See for instance \cite[Theorem III.H.1.7]{BH}. 

\begin{thm}
Let $X$ be a $\delta$-hyperbolic graph. For every $A >0$ and every $B \geq 0$, there exists some $M(\delta,A,B)$, called the \emph{Morse constant}, such that: for every $x,y \in X$, any two $(A,B)$-quasigeodesics between $x$ and $y$ stay at Hausdorff distance at most $M(\delta,A,B)$. 
\end{thm}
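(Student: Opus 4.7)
The plan is to first reduce to the case of continuous quasigeodesics via the standard ``taming'' trick, then establish the one-sided bound that any continuous $(A,B)$-quasigeodesic between $x$ and $y$ lies in a bounded neighbourhood of any geodesic $[x,y]$, and finally derive the two-sided Hausdorff estimate by a triangle-inequality argument.

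For the taming step, given an $(A,B)$-quasigeodesic $c : [0,L] \to X$ from $x$ to $y$, I would replace it by the continuous piecewise-geodesic path $c'$ obtained by concatenating geodesic segments between the samples $c(0), c(1), \ldots, c(\lfloor L \rfloor), c(L)$. A direct computation using the $(A,B)$-condition shows that $c'$ is an $(A',B')$-quasigeodesic with $A', B'$ depending only on $A, B$, and that its image lies within Hausdorff distance $A + B$ of the image of $c$; so it is enough to prove the one-sided bound for continuous quasigeodesics, up to an additive adjustment in the final Morse constant. For the core one-sided bound, the natural approach is to argue by contradiction: suppose $c$ is a continuous $(A,B)$-quasigeodesic from $x$ to $y$ and let $D = \sup_t d(c(t), [x,y])$. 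Pick a point $w \in [x,y]$ whose distance $D'$ to the image of $c$ is essentially $D$, then points $y_-, y_+$ on $[x,y]$ at distance $2D'$ from $w$ on either side (truncating at $x$ or $y$ if needed), and finally nearest points $c(t_-), c(t_+)$ on the image of $c$. The $c$-subarc from $c(t_-)$ to $c(t_+)$ has length at most roughly $6 A D' + A B$ by the quasi-isometry inequality, yet it must avoid the open ball $B(w, D')$ because $D'$ was chosen maximal---and in a $\delta$-hyperbolic space, any continuous path avoiding $B(w, D')$ while joining two points lying near the endpoints of a geodesic through $w$ at distance $\sim 4D'$ apart has length at least exponential in $D'/\delta$.

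The main obstacle is executing this exponential-divergence estimate cleanly: one has to quantify the intuition that ``going around a deep hole'' in a hyperbolic space is much more expensive than in a Euclidean space, ideally via the thin-triangle property applied to suitably placed comparison tripods. Comparing the at-most-linear length $6AD' + AB$ with the exponential detour cost then forces $D \leq M_0(\delta, A, B)$. Once this one-sided bound is secured, the reverse inclusion---every point of $[x,y]$ lies within $M_0 + O(\delta)$ of the image of $c$---follows from a discrete intermediate-value argument: the nearest-point projection of $c(n)$, for $n = 0, 1, \ldots, \lfloor L \rfloor$, onto $[x,y]$ varies by at most $A + B + O(\delta)$ per step (projections are coarsely Lipschitz in hyperbolic spaces) and runs from near $x$ to near $y$, hence visits every point of $[x,y]$ up to $O(A + B + \delta)$. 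Finally, given two $(A,B)$-quasigeodesics $c_1, c_2$ from $x$ to $y$, both images lie within $M_0$ of any geodesic $[x,y]$, while $[x,y]$ lies within $M_0 + O(\delta)$ of each image, so the Hausdorff distance between $c_1$ and $c_2$ is at most $2M_0 + O(\delta) =: M(\delta, A, B)$.
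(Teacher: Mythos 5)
This statement is not proved in the paper at all---it is the classical Morse lemma, and the author simply cites Bridson--Haefliger \cite[Theorem III.H.1.7]{BH}. So the comparison is with the standard textbook proof, which your sketch does follow in outline (taming, exponential divergence, then the reverse inclusion). The ideas are all present, but there is a genuine logical gap in how the two one-sided Hausdorff bounds are obtained, and as written the argument is circular.

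The problem is at the core step. You set $D = \sup_t d(c(t),[x,y])$ (how far the quasigeodesic strays from the geodesic) and then ``pick a point $w \in [x,y]$ whose distance $D'$ to the image of $c$ is essentially $D$.'' But $D$ and $D' := \sup_{w \in [x,y]} d(w,\operatorname{im} c)$ are the two \emph{different} one-sided Hausdorff distances, and there is no a priori reason they are comparable --- indeed, their comparability is precisely (part of) what the Morse lemma asserts, so you cannot assume $D'\approx D$. The exponential-divergence estimate you then run (path from $c(t_-)$ to $c(t_+)$ of linear length must avoid $B(w,D')$, hence $D'$ is at most logarithmic in that length) is a correct and standard argument, but what it bounds is $D'$, not $D$: it shows the \emph{geodesic} lies in a bounded neighbourhood of the \emph{quasigeodesic}. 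Your subsequent ``discrete intermediate-value'' argument with coarsely Lipschitz projections does exactly the same thing: it shows that the projections of $c(0),c(1),\dots$ sweep out $[x,y]$, hence every $w\in[x,y]$ is within $O(A+B+\delta)+d(c(n),[x,y])$ of some $c(n)$ --- which bounds $D'$ in terms of $D$, again the same direction. So both halves of your proof establish $[x,y] \subset N_{?}(\operatorname{im} c)$, and the inclusion $\operatorname{im} c \subset N_{?}([x,y])$, i.e.\ the bound on $D$, is never proved.

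To close the gap, after obtaining the bound $D' \le D_0(\delta,A,B)$ you need a separate argument for $D$. The standard one (in Bridson--Haefliger) is: take a maximal parameter interval $[a',b']$ on which $c$ stays outside the closed $D_0$-neighbourhood of $[x,y]$; since $[x,y]$ is connected and every point of it is within $D_0$ of $\operatorname{im}(c)$, while $c([a',b'])$ is too far away to be used, there is a single point $\zeta\in[x,y]$ within $D_0$ of both some $c(s)$ with $s\le a'$ and some $c(t)$ with $t\ge b'$; then $d(c(s),c(t))\le 2D_0$, so the quasigeodesic inequality bounds $|s-t|$ and hence the length of $c|_{[a',b']}$, which bounds $\sup_{u\in[a',b']}d(c(u),[x,y])$. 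With that inserted, your outline becomes the textbook proof.
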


\noindent
Now, let us prove two preliminary lemmas which will be useful in the next sections. 

\begin{lemma}\label{lem:geodinterproj}
Let $X$ be a $\delta$-hyperbolic graph and $\gamma_1,\gamma_2$ two lines which are $K$-quasiconvex for some $K \geq 0$. For every $x_1 \in \gamma_1$ and $x_2 \in \gamma_2$, any geodesic $[x_1,x_2]$ between $x_1$ and $x_2$ intersects the $M(\delta,1,2(4 \delta+K))$-neighborhood of the nearest-point projection of $x_1$ onto $\gamma_2$.  
\end{lemma}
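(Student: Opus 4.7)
The plan is to introduce the nearest-point projection $p_2\in\gamma_2$ of $x_1$ onto $\gamma_2$, build an explicit path $\alpha\cup\beta$ from $x_1$ to $x_2$ that runs through $p_2$, verify that this path is a $(1,2(4\delta+K))$-quasigeodesic, and then invoke the Morse property to conclude that the arbitrary geodesic $[x_1,x_2]$ passes within $M(\delta,1,2(4\delta+K))$ of $p_2$.

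Concretely, I would set $\alpha=[x_1,p_2]$ and let $\beta=[p_2,x_2]$ be any geodesic. Because $p_2,x_2\in\gamma_2$ and $\gamma_2$ is $K$-quasiconvex, the whole of $\beta$ lies in the $K$-neighbourhood of $\gamma_2$. The key technical step is the following bound on a Gromov product: for every $v\in\beta$,
\[ (x_1,v)_{p_2}\leq 4\delta+K. \]
To prove it, complete the concatenation $\alpha\cup\beta|_{[p_2,v]}$ to a geodesic triangle by adjoining a geodesic $[x_1,v]$ and consider the comparison tripod. The intriple points $a\in[x_1,p_2]$ and $b\in[p_2,v]$ both sit at distance $(x_1,v)_{p_2}$ from $p_2$, and the thin-triangle property gives $d(a,b)\leq 4\delta$. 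Quasiconvexity of $\gamma_2$ supplies a point $b'\in\gamma_2$ with $d(b,b')\leq K$; minimality of $p_2$ then yields
\[ d(x_1,p_2)\leq d(x_1,b')\leq d(x_1,b)+K\leq d(x_1,a)+4\delta+K. \]
Since $d(x_1,a)=d(x_1,p_2)-(x_1,v)_{p_2}$, rearranging gives the claimed inequality, and hence $d(x_1,v)\geq d(x_1,p_2)+d(p_2,v)-2(4\delta+K)$.

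To upgrade this into a quasi-geodesicity property, take arbitrary $u\in\alpha$ and $v\in\beta$. Since $d(x_1,u)+d(u,p_2)=d(x_1,p_2)$, the triangle inequality $d(u,v)\geq d(x_1,v)-d(x_1,u)$ combined with the previous estimate delivers $d(u,v)\geq d(u,p_2)+d(p_2,v)-2(4\delta+K)$. Parametrising $\alpha\cup\beta$ by arclength therefore exhibits it as a $(1,2(4\delta+K))$-quasigeodesic between $x_1$ and $x_2$. The Morse property then ensures that $[x_1,x_2]$ and $\alpha\cup\beta$ are at Hausdorff distance at most $M(\delta,1,2(4\delta+K))$; in particular, since $p_2\in\alpha\cup\beta$, the geodesic $[x_1,x_2]$ meets the $M(\delta,1,2(4\delta+K))$-neighbourhood of $p_2$, as required.

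The hard step is the Gromov product bound. The nearest-point-projection inequality only controls $d(x_1,\cdot)$ on $\gamma_2$ itself, so one must move back and forth between a point of $\beta$ and its $\gamma_2$-companion (paying $K$) while simultaneously invoking $4\delta$-thinness of the intriple; balancing these two contributions so that $K$ enters additively, rather than multiplicatively, is the technical heart of the argument, and dictates the exact shape of the constant $M(\delta,1,2(4\delta+K))$ appearing in the statement.
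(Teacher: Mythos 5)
Your proposal is correct and follows essentially the same route as the paper's proof: decompose the broken path into $[x_1,p_2]\cup[p_2,x_2]$, use a thin comparison triangle together with $K$-quasiconvexity of $\gamma_2$ and the nearest-point property of $p_2$ to show the concatenation is a $(1,2(4\delta+K))$-quasigeodesic, then invoke the Morse property. The paper works directly with the triangle $\Delta(a,b,p_2)$ for arbitrary $a\in[x_1,p_2]$, $b\in[p_2,x_2]$, whereas you bound the Gromov product $(x_1,v)_{p_2}$ and then pass to general $u\in[x_1,p_2]$ by the triangle inequality; this is only a cosmetic reorganization of the same argument.
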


\begin{proof}
Fix two points $x_1 \in \gamma_1$ and $x_2 \in \gamma_2$, and a geodesic $[x_1,x_2]$ between them. Let $p \in \gamma_2$ be a nearest-point projection of $x_1$ onto $\gamma_2$. Fixing some geodesics $[x_1,p]$ and $[p,x_2]$, we claim that $[x_1,p] \cup [p,x_2]$ is a $(1,2(4\delta+K)$-quasigeodesic.

\medskip \noindent
The only point to verify is that, given two points $a \in [x_1,p]$ and $b \in [p,x_2]$, the inequality
$$d(a,b) \geq d(a,p)+d(p,b) -2(4\delta+K)$$
holds. Let us consider a geodesic triangle $\Delta = \Delta(a,b,p)$, and let $\{q_1,q_2,q_3\}$ denote its intriple where $q_1 \in [a,b]$, $q_2 \in [a,p]$ and $q_3 \in [b,p]$. Notice that, since $\gamma_2$ is $K$-quasiconvex, there exists some $q \in \gamma_2$ satisfying $d(q_3,q) \leq K$. One has
$$\begin{array}{lcl} d(a,b) & = & d(a,q_1)+d(q_1,b) = d(a,q_2)+d(b,q_3) \\ \\ & = & d(a,p)-d(p,q_2) + d(p,b) - d(q_3,p) \\ \\ & = & d(a,p)+d(p,b)- 2d(p,q_2) \end{array}$$
On the other hand,
$$\begin{array}{lcl} d(x_1,q_2)+d(q_2,p) & = & d(x_1, \gamma_2) \leq d(x_1,q) \\ \\ & \leq & d(x_1,q_2) + d(q_2,q_3) + d(q_3,q) \\ \\ & \leq & d(x_1,q_2) + 4\delta +K \end{array}$$
hence $d(p,q_2) \leq 4\delta +K$. Our claim follows. We register our conclusion for future use.

\begin{fact}\label{fact:quasigeod}
Let $X$ be a $\delta$-hyperbolic graph, $\gamma$ a $K$-quasiconvex line, and $a \in X$, $b \in \gamma$ two vertices. If $p \in \gamma$ denotes a nearest-point projection of $a$ onto $\gamma$, then any concatenation $[a,p] \cup [p,b]$ defines a $(1,2(4\delta+K))$-quasigeodesic.
\end{fact}

\noindent
Now, we conclude from the Morse property that the Hausdorff distance between $[x_1,x_2]$ and $[x_1,p]\cup [p,x_2]$ is at most $M(\delta,1,2(4\delta+K))$. The desired conclusion follows. 
\end{proof}

\begin{lemma}\label{lem:brokengeod}
Let $X$ be a $\delta$-hyperbolic graph, $x,y \in X$ two vertices and $\gamma$ a $K$-quasiconvex line. Fix two nearest-point projections $x',y' \in \gamma$ respectively of $x,y$ onto $\gamma$, and suppose that $d(x',y') > 36\delta+5K$. Then
$$d(x,x')+d(x',y')+d(y,y')- 4(6\delta+K) \leq d(x,y) \leq d(x,x')+d(x',y')+d(y,y').$$
\end{lemma}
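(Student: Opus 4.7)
The upper bound is immediate from the triangle inequality, so the only substantive task is the lower bound.

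My plan is to use the \emph{four-point condition} equivalent to $\delta$-hyperbolicity: for any four points, the two largest of the three pair sums differ by at most $2\delta$. I apply this to the quadruple $\{x,y,x',y'\}$, setting
\[ S_1 = d(x,x')+d(y,y'), \quad S_2 = d(x,y')+d(y,x'), \quad S_3 = d(x,y)+d(x',y'). \]

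First I collect two a priori inequalities. Since $x'$ and $y'$ are nearest-point projections and $y',x' \in \gamma$, one has $d(x,x') \leq d(x,y')$ and $d(y,y') \leq d(y,x')$, so $S_1 \leq S_2$. Next, Fact~\ref{fact:quasigeod} applied with the triples $(x,x',y')$ and $(y,y',x')$ produces two $(1,2(4\delta+K))$-quasigeodesics $[x,x'] \cup [x',y']$ and $[y,y'] \cup [y',x']$, whence
\[ d(x,y') \geq d(x,x')+d(x',y')-2(4\delta+K) \ \ \text{and}\ \  d(y,x') \geq d(y,y')+d(x',y')-2(4\delta+K); \]
adding these yields $S_2 - S_1 \geq 2\,d(x',y') - 4(4\delta+K)$.

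I then run the case analysis dictated by the four-point condition. Because $S_1 \leq S_2$, the maximum among $S_1, S_2, S_3$ is $S_2$ or $S_3$, and the second largest is within $2\delta$ of it. The only configuration in which $S_3$ fails to lie within $2\delta$ of $S_2$ is when the two largest are $S_1$ and $S_2$, forcing $S_2 - S_1 \leq 2\delta$. Combined with the inequality above, this would impose $d(x',y') \leq \delta + 2(4\delta+K) = 9\delta + 2K$, contradicting the hypothesis $d(x',y') > 36\delta + 5K$. Hence $S_3 \geq S_2 - 2\delta$, and substituting the lower bound on $S_2$ gives
\[ d(x,y) = S_3 - d(x',y') \geq d(x,x')+d(x',y')+d(y,y') - (18\delta+4K), \]
which is stronger than the stated bound since $18\delta+4K \leq 4(6\delta+K) = 24\delta+4K$.

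The anticipated obstacle is choosing the right tool: a direct ``quasi-geodesic patching'' of the full broken path $[x,x']\cup[x',y']\cup[y',y]$ also works in principle, but going through the four-point condition reduces everything to one clean case analysis, and the large-projection hypothesis intervenes in precisely one place, namely to exclude the single degenerate configuration.
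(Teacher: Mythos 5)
Your proof is correct, and it takes a genuinely different route from the paper's. The paper proves the lower bound by fixing geodesics $[x,y]$, $[x,x']$, $[x',y]$, $[x',y']$, $[y,y']$, passing to comparison tripods of the two triangles $\Delta(x,y,x')$ and $\Delta(x',y',y)$, and tracking the intriple points together with quasiconvexity of $\gamma$; the hypothesis $d(x',y')>36\delta+5K$ enters to rule out one degenerate case ($d(x',p_3)>d(x',q_3)$), exactly as your hypothesis rules out the configuration where the two largest pair-sums are $S_1$ and $S_2$. You instead invoke the four-point reformulation of $\delta$-hyperbolicity (which does follow from the paper's Gromov-product definition with constant $2\delta$, so your bookkeeping is right), re-use Fact~\ref{fact:quasigeod} twice to show $S_2-S_1 \geq 2d(x',y')-4(4\delta+K)$, and then the entire geometric case analysis reduces to one arithmetic dichotomy. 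Your approach is more modular, avoids re-deriving thin-triangle estimates that are already encapsulated in Fact~\ref{fact:quasigeod}, and in fact gives the slightly sharper error term $18\delta+4K$ in place of $4(6\delta+K)=24\delta+4K$. The only mild overhead is that you need the equivalence between the Gromov-product and four-point formulations of hyperbolicity, which the paper never states but which is standard and valid with the constants you use.
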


\begin{proof}
The right-hand side of our inequality is a consequence of the triangle inequality, so we only have to prove its left-hand side.

\medskip \noindent
Fix some geodesics $[x,y]$, $[x',y']$, $[x,x']$, $[y,y']$ and $[x',y]$. Let $\{p_1,p_2,p_3\}$ be the intriple of the geodesic triangle $\Delta(x,y,x')$ where $p_1 \in [x,x']$, $p_2 \in [x,y]$, $p_3 \in [x',y]$; and similarly let $\{q_1,q_2,q_3\}$ be the intriple of the geodesic triangle $\Delta(x',y',y)$ where $q_1 \in [y,y']$, $q_2 \in [x',y']$, $q_3 \in [x',y]$. Notice that, since $\gamma$ is $K$-quasiconvex, there exists some $q \in \gamma$ satisfying $d(q_2,q) \leq K$. The configuration is summarised by Figure \ref{figureCAT}. Notice that
$$\begin{array}{lcl} d(y,q_1)+d(q_1,y') & = & d(y,\gamma) \leq d(y,q) \\ \\ & \leq & d(y,q_1)+d(q_1,q_2) + d(q_2,q) \\ \\ & \leq & d(y,q_1)+ 4 \delta + K \end{array}$$
hence $d(y',q_1) \leq 4\delta +K$. Now, we distinguish two cases.
\begin{figure}
\begin{center}
\includegraphics[scale=0.16]{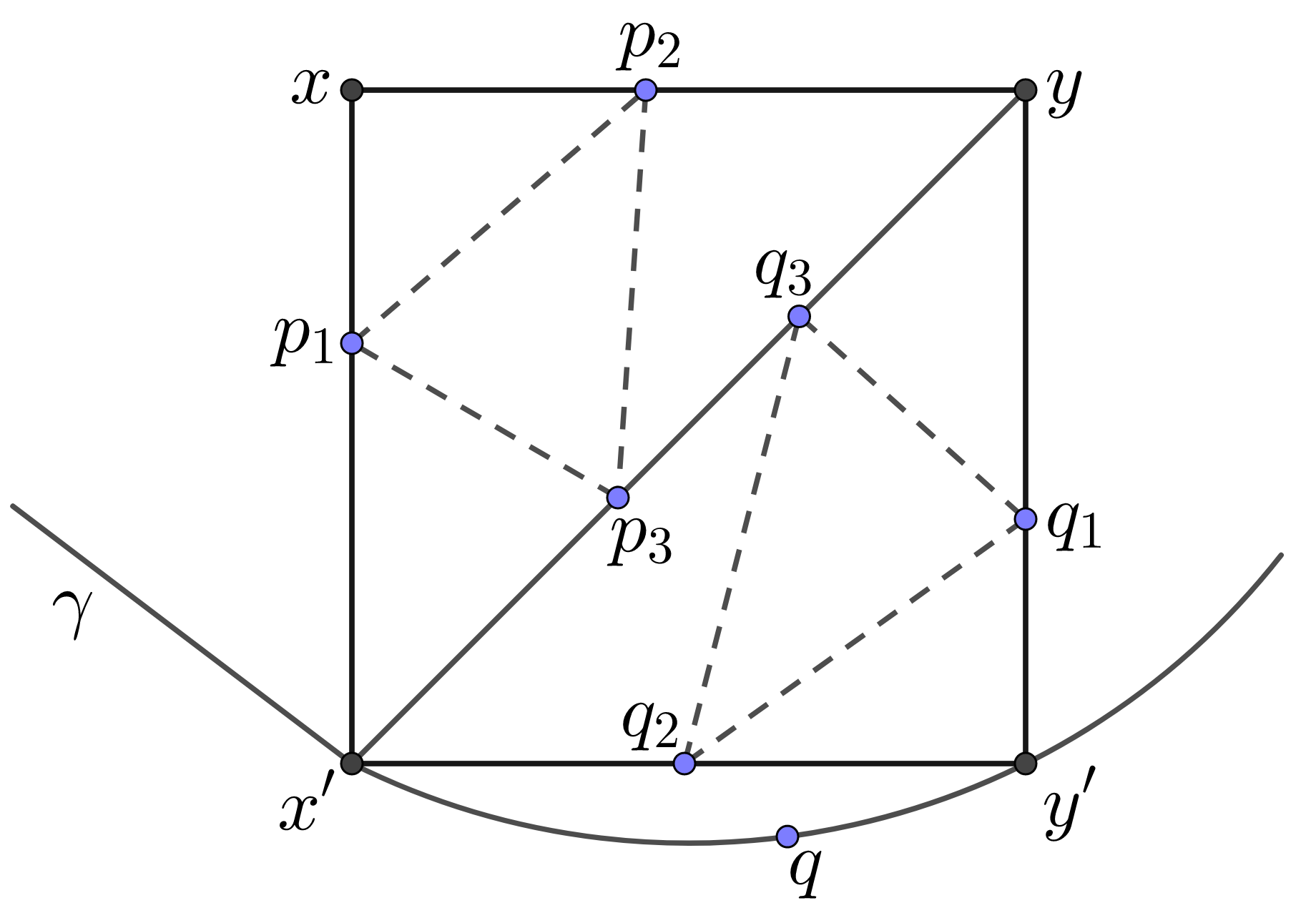}
\caption{The configuration of points in the proof of Lemma \ref{lem:brokengeod}.}
\label{figureCAT}
\end{center}
\end{figure}

\medskip \noindent
\textbf{Case 1:} Suppose that $d(x',p_3) \leq d(x',q_3)$. Then there exists some $p_3' \in [x',y']$ satisfying $d(p_3,p_3') \leq 4 \delta$, and because $\gamma$ is $K$-quasiconvex, there exists some $p_3'' \in \gamma$ satisfying $d(p_3',p_3'') \leq K$. One has
$$\begin{array}{lcl} d(x,p_1)+d(p_1,x') & =& d(x,x') = d(x,\gamma) \leq d(x,p_3'') \\ \\ & \leq & d(x,p_1)+d(p_1,p_3)+d(p_3,p_3')+d(p_3',p_3'') \\ \\ & \leq & d(x,p_1) + 4 \delta +4\delta +K \end{array}$$
hence $d(x',p_1) \leq 8\delta+K$. Next, notice that
$$\begin{array}{lcl} d(x,y) & = & d(x,p_2)+d(p_2,y) = d(x,p_1)+d(y,p_3) \\ \\ & = & d(x,x') - d(x',p_1) + d(y,x')-d(x',p_3) \\ \\ & =& d(x,x') + d(y,x')-2d(x',p_1) \end{array}$$
and that
$$\begin{array}{lcl} d(y,x') & = & d(y,q_3) + d(q_3,x') = d(y,q_1) + d(x',q_2) \\ \\ & = & d(y,y')-d(y',q_1) + d(x',y') - d(y',q_2) \\ \\ & = & d(y,y')+d(x',y') - 2d(y',q_1) \end{array}$$
We conclude that
$$\begin{array}{lcl} d(x,y) & = & d(x,x')+d(x',y')+d(y,y') - 2 \left( d(x',p_1)+d(y',q_1) \right) \\ \\ & \geq & d(x,x')+d(x',y')+d(y,y') -4(6\delta+K) \end{array}$$
\textbf{Case 2:} Suppose that $d(x',p_3)> d(x',q_3)$. As a consequence, there exists some $q_3' \in [x,x']$ satisfying $d(q_3,q_3') \leq 4\delta$. Notice that 
$$\begin{array}{lcl} d(x,q_3')+d(q_3',x') & = & d(x, \gamma)  \leq d(x,q) \leq d(x,q_3') + d(q_3',q_3)+d(q_3,q_2)+d(q_2,q) \\ \\ & \leq & d(x,q_3') + 4\delta + 4 \delta +K\end{array}$$
hence $d(q_3',x') \leq 4(6 \delta +K)$. Therefore, 
$$\begin{array}{lcl} d(x',y') & \leq & d(x',q_3')+d(q_3',q_3)+d(q_3,q_1)+d(q_1,y) \\ \\ & \leq & 4(6\delta+K)+4\delta + 4 \delta + (4\delta+K) \end{array}$$
ie., $d(x',y') \leq 36\delta+5K$. This contradicts our assumptions, so our second case cannot happen. 
\end{proof}

\begin{cor}\label{cor:foot}
Let $X$ be a $\delta$-hyperbolic graph, $x,y \in X$ two vertices, $[x,y]$ a geodesic between $x$ and $y$, and $\gamma$ a $K$-quasiconvex line. Fix two nearest-point projections $x',y' \in \gamma$ respectively of $x,y$ onto $\gamma$, and suppose that $d(x',y') > 36\delta+5K$. Then $d(x,y) \geq d(x,x')$, so that there exists a unique point $a \in [x,y]$ satisfying $d(x,a)=d(x,x')$, and moreover $d(a,x') \leq 2M(\delta, 1,4(6\delta+K))$.
\end{cor}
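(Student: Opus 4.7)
The plan is to construct a $(1,C)$-quasigeodesic $\alpha$ passing through $x'$, apply the Morse property to bring $\alpha$ close to $[x,y]$, and then exploit the fact that two points on a common geodesic at similar distances from an endpoint are close.

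First I would justify the existence and uniqueness of $a$. Lemma \ref{lem:brokengeod} gives $d(x,y) \geq d(x,x') + d(x',y') + d(y,y') - 4(6\delta+K)$, and the hypothesis $d(x',y') > 36\delta + 5K > 24\delta + 4K = 4(6\delta+K)$ forces $d(x,y) > d(x,x')$. Since $[x,y]$ is a geodesic, the point $a \in [x,y]$ with $d(x,a) = d(x,x')$ is then well-defined and unique.

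Next I would form the broken path $\alpha = [x,x'] \cup [x',y'] \cup [y',y]$ of length $L = d(x,x') + d(x',y') + d(y',y)$ and show that it is a $(1, 4(6\delta+K))$-quasigeodesic. This is the only step that requires a small argument: Lemma \ref{lem:brokengeod} only bounds the distance between the \emph{endpoints} of $\alpha$, but for any $p, q \in \alpha$ with $p$ preceding $q$ we have $d_{\alpha}(x,p) + d_{\alpha}(p,q) + d_{\alpha}(q,y) = L$, and the triangle inequality gives
\[ L - 4(6\delta+K) \leq d(x,y) \leq d_{\alpha}(x,p) + d(p,q) + d_{\alpha}(q,y), \]
which rearranges to $d(p,q) \geq d_{\alpha}(p,q) - 4(6\delta+K)$, as needed.

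Finally, setting $M := M(\delta, 1, 4(6\delta+K))$, the Morse property places $\alpha$ and $[x,y]$ within Hausdorff distance $M$. As $x' \in \alpha$, there is some $a' \in [x,y]$ with $d(a', x') \leq M$. Then $|d(x,a') - d(x,a)| = |d(x,a') - d(x,x')| \leq M$, and since $a$ and $a'$ both lie on the geodesic $[x,y]$ this forces $d(a,a') \leq M$; the triangle inequality concludes with $d(a,x') \leq 2M$. I do not expect any substantial obstacle here: the proof is a straightforward chaining of Lemma \ref{lem:brokengeod}, the Morse property, and an elementary transfer of position from $\alpha$ back to $[x,y]$, with the sole technical point being the upgrade of the endpoint estimate of $\alpha$ to all pairs of its points.
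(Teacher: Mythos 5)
Your proof is correct and follows the same overall route as the paper: show $d(x,y)>d(x,x')$ via Lemma \ref{lem:brokengeod}, argue that the broken path $\alpha=[x,x']\cup[x',y']\cup[y',y]$ is a $(1,4(6\delta+K))$-quasigeodesic, invoke the Morse property to find $a'\in[x,y]$ near $x'$, and transfer this to $a$ using that $a,a'$ both lie on the geodesic $[x,y]$. The one place where you improve on the paper's exposition is the quasigeodesic step. The paper attributes this to ``Fact \ref{fact:quasigeod} and Lemma \ref{lem:brokengeod}'' without spelling out how the various pairs of points on $\alpha$ are handled, and Fact \ref{fact:quasigeod} on its own only covers two-segment subpaths adjacent to the projection point. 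You instead observe the clean general fact that a concatenation of geodesics whose total length exceeds the distance between its endpoints by at most $C$ is automatically a $(1,C)$-quasigeodesic, and verify it by the chain
\[
L-C\ \leq\ d(x,y)\ \leq\ d_\alpha(x,p)+d(p,q)+d_\alpha(q,y),
\]
which is precisely what Lemma \ref{lem:brokengeod} feeds into. This is a self-contained replacement for the case analysis the paper leaves implicit, and is a slicker way to reach the same constant.
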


\begin{proof}
First of all, notice that, as a consequence of Lemma \ref{lem:brokengeod}, one has
$$\begin{array}{lcl} d(x,y) & \geq & d(x,x')+d(x',y')+d(y',y) - 4(6 \delta +K) \\ \\ & > & d(x,x') + 36\delta+5K - 4(6 \delta +K) \geq d(x,x') \end{array}$$
which proves the first assertion of our statement.

\medskip \noindent
Fix some geodesics $[x,x']$, $[y,y']$, $[x',y']$. As a consequence of Fact \ref{fact:quasigeod} and Lemma \ref{lem:brokengeod}, we know that $[x,x'] \cup [x',y'] \cup [y',y]$ defines a $(1,4(6\delta+K))$-quasigeodesic between $x$ and $y$. It follows from the Morse property that there exists some $a' \in [x,y]$ satisfying $d(x',a') \leq M(\delta, 1, 4(6\delta+K))$. One has
$$d(a,x') \leq d(a,a')+d(a',x') \leq d(a,a')+ M(\delta,1,4(6\delta+K)).$$
On the other hand,
$$|d(x,a')-d(x,x')| \leq d(a',x') \leq M(\delta,1,4(6 \delta+K)),$$
so
$$d(a,a')= |d(x,a)-d(x,a')| = |d(x,x')-d(x,a')| \leq M(\delta,1,4(6\delta+K)).$$
The desired conclusion follows. 
\end{proof}

\begin{cor}\label{cor:distproj}
Let $X$ be a $\delta$-hyperbolic graph, $\gamma$ a $K$-quasiconvex line and $x,y \in X$ two points. If $x',y' \in \gamma$ are nearest-point projections onto $\gamma$ of $x,y$ respectively, then
$$d \left( x',y' \right) \leq d(x,y) + 36 \delta+5K.$$
\end{cor}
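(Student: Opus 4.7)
The plan is to prove this by a direct two-case split on whether $d(x',y')$ is large or small compared to the constant $36\delta+5K$ appearing in the statement.

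First I would handle the trivial case: if $d(x',y') \leq 36\delta+5K$, then since $d(x,y) \geq 0$, the inequality
$$d(x',y') \leq 36\delta+5K \leq d(x,y)+36\delta+5K$$
holds immediately, with no hyperbolicity input needed.

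The interesting case is $d(x',y') > 36\delta+5K$, where exactly the hypothesis of Lemma \ref{lem:brokengeod} is satisfied. I would apply that lemma to the points $x,y$ and the quasiconvex line $\gamma$ to get
$$d(x,x')+d(x',y')+d(y,y') - 4(6\delta+K) \leq d(x,y).$$
Rearranging and using $d(x,x'),d(y,y') \geq 0$, this yields
$$d(x',y') \leq d(x,y) + 24\delta+4K \leq d(x,y) + 36\delta+5K,$$
which is the desired bound.

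There is no real obstacle here: the corollary is essentially a clean repackaging of Lemma \ref{lem:brokengeod}, with the slack constant chosen so that the small-distance case is absorbed into the same inequality. The only thing to be careful about is making sure the threshold $36\delta+5K$ in the assumption of Lemma \ref{lem:brokengeod} matches the additive constant on the right-hand side (which it does, with room to spare), so that the two cases glue together without needing to optimise.
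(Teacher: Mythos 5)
Your proof is correct and matches the paper's argument exactly: both split on whether $d(x',y')$ exceeds $36\delta+5K$, dispose of the small case trivially, and otherwise invoke Lemma~\ref{lem:brokengeod} and drop the nonnegative terms $d(x,x')$ and $d(y,y')$.
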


\begin{proof}
If $d(x',y') \leq 36\delta +5K$ there is nothing to prove, so suppose that $d(x',y') >36\delta+5K$. As a consequence of Lemma \ref{lem:brokengeod},
$$d(x,y) \geq d(x',y') - 4(6\delta+K) \geq d(x',y') - 36\delta-5K,$$
which concludes the proof of our corollary. 
\end{proof}

\noindent
Now, let $X$ be a $\delta$-hyperbolic graph and $g \in \mathrm{Isom}(X)$ an isometry. The \emph{translation length} of $g$ is 
$$[g]= \inf \left\{ d(x,g \cdot x) \mid x \in X \right\};$$
and the \emph{minimal set} of $g$ is
$$C_g = \left\{ x \in X \mid d(x,g \cdot x) = [g] \right\}.$$ 
It is worth noticing that, because $X$ is a graph, the infinimum in the definition of $[g]$ turns out to be a minimum, so that $C_g$ is non-empty. 

\begin{definition}
Let $X$ be a hyperbolic graph and $g \in \mathrm{Isom}(X)$ a loxodromic isometry. An \emph{axis} of $g$ is a concatenation $\ell= \bigcup\limits_{k \in \mathbb{Z}} g^k \cdot [x,g \cdot x]$ for some $x \in C_g$. 
\end{definition}

\noindent
Noticing that an axis of $g$ is a $[g]$-local geodesic, the following lemma follows from \cite[Theorem III.$\Gamma$.1.13]{BH}.

\begin{lemma}
Let $X$ be a $\delta$-hyperbolic graph and $g \in \mathrm{Isom}(X)$ a loxodromic isometry satisfying $[g] >32 \delta$. Any axis of $g$ is $12\delta$-quasiconvex.
\end{lemma}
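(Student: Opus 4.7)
My plan is to deduce this in two stages. First, I will verify the observation implicit in the lemma's statement: any axis $\ell$ of $g$ is a $[g]$-local geodesic of $X$. Second, I will combine this with Theorem III.$\Gamma$.1.13 of Bridson--Haefliger, which controls how $k$-local geodesics stay close to honest geodesics whenever $k$ is much larger than $\delta$, and read off the constant $12\delta$.

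For the local-geodesy step, fix $x \in C_g$ and write $\ell = \bigcup_{k \in \mathbb{Z}} g^k \cdot [x, g \cdot x]$. Each translate $g^k \cdot [x, g \cdot x]$ is itself a geodesic of length $[g]$, so any sub-path of $\ell$ of length at most $[g]$ either sits inside one such segment (nothing to prove) or straddles exactly one junction $g^k \cdot x$, say with endpoints $y$ on the left and $z$ on the right. Applying $g$ to $y$ sends it to the segment $g^k \cdot [x, g \cdot x]$ containing $z$; parameterising that segment from $g^k \cdot x$ places $g y$ at distance $[g] - d_\ell(y, g^k x)$ and $z$ at distance $d_\ell(g^k x, z)$ from $g^k \cdot x$, so
$$d(z, g y) = [g] - d_\ell(y, g^k x) - d_\ell(g^k x, z).$$
The minimality bound $d(y, g y) \geq [g]$ together with the triangle inequality $d(y, g y) \leq d(y, z) + d(z, g y)$ then forces $d(y, z) \geq d_\ell(y, g^k x) + d_\ell(g^k x, z)$, which matches the intrinsic length of the sub-path. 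Combined with the reverse triangle inequality, equality holds, and $\ell$ is locally geodesic at scale $[g]$.

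For the quasiconvexity itself, fix $p, q \in \ell$ and a geodesic $[p, q]$ in $X$. The sub-arc of $\ell$ from $p$ to $q$ remains a $[g]$-local geodesic with the same endpoints, and since $[g] > 32\delta$ is comfortably above the $8\delta$ threshold of III.$\Gamma$.1.13, the cited theorem bounds the Hausdorff distance between $[p, q]$ and that sub-arc in terms of $\delta$, which reads off as $12\delta$ under the hypothesis $[g] > 32\delta$. Hence $[p, q]$ lies in the $12\delta$-neighborhood of $\ell$, as claimed. The only genuinely substantive step is the verification of local geodesy at the junction points; once that is in hand, the quasiconvexity statement is an immediate invocation of the standard stability theorem for local geodesics in Gromov-hyperbolic spaces.
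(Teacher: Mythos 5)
Your proof is correct and takes essentially the same route as the paper: observe that an axis is a $[g]$-local geodesic and then invoke Bridson--Haefliger Theorem III.$\Gamma$.1.13. The paper dispatches this in one line without verifying the local-geodesy claim, which you correctly fill in by exploiting the minimality of the displacement $[g]$ at the junction points $g^k x$.
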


\noindent
We conclude this section with a last preliminary lemma, which will be fundamental in the proof of the hyperbolic rigidity of Thompson's group $V$.

\begin{lemma}\label{lem:hypgeom}
Let $X$ be a hyperbolic graph and $g,h \in \mathrm{Isom}(X)$ two isometries. Suppose that $g$ is loxodromic of translation length at least $525\delta$ and that $h$ is elliptic. Fix an axis $\ell$ of $g$. If $hg$ is elliptic, then there exists a point $x \in \ell$ such that 
$$d(x,hx) \leq 8M(\delta, 1, 62\delta)+243\delta.$$
\end{lemma}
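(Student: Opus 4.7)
The plan is to leverage the elliptic property of $hg$, together with the quasi-convexity of $\ell$, to locate an approximately fixed point of $h$ on $\ell$. Since $[g] \geq 525\delta > 32\delta$, the preceding lemma gives that $\ell$ is $12\delta$-quasiconvex, so $K = 12\delta$ and the constants of the preliminary section become $36\delta+5K = 96\delta$ (Corollary \ref{cor:distproj}) and $4(6\delta+K) = 72\delta$ (Corollary \ref{cor:foot}). Since $hg$ is elliptic on the $\delta$-hyperbolic graph $X$, I would first pick a point $y \in X$ with $d(y, hgy) \leq C_0$ for some constant $C_0 = O(\delta)$; equivalently, by the isometry $h^{-1}$, $d(h^{-1}y, gy) \leq C_0$.

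Next, I would analyse the geodesic triangle $(y, gy, hgy)$: the sides $d(y, gy) \geq [g]$ and $d(gy, hgy) \in [[g] - C_0, [g] + C_0]$ are both long, while $d(y, hgy) \leq C_0$ is short. The thin triangle property therefore forces the two long sides $[gy, y]$ and $[gy, hgy]$ to fellow-travel from $gy$ for distance approximately $[g] - C_0/2$, within $O(\delta)$. Letting $p = \pi_\ell(y)$, so that $gp = \pi_\ell(gy)$ and $d(p, gp) = [g] > 96\delta$, Corollary \ref{cor:foot} applied to $(y, gy)$ and to $(gy, hgy)$ (whose projections on $\ell$ are still at distance $\geq [g] - C_0 - 96\delta > 96\delta$, by Corollary \ref{cor:distproj}) yields foot points close to $p$ and to $gp$ on each of the two long sides. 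Combined with the fellow-traveling, this identifies a point $m \in \ell$ near the midpoint of the segment $[p, gp] \subset \ell$ that lies close to both long geodesics of the triangle.

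The final step is to control $d(m, hm)$. The symmetric relation $d(h^{-1}y, gy) \leq C_0$ allows the same foot-point and fellow-traveling analysis to be applied to the triangle $(y, gy, h^{-1}y)$, positioning $h^{-1}m$ close to $m$ and hence, by isometry, $hm$ close to $m$. The stated bound $8M(\delta, 1, 62\delta) + 243\delta$ emerges from tracking the constants carefully: four applications of Corollary \ref{cor:foot} give the coefficient $8$ (with the quasi-convexity inflated along the combined broken paths so that $2(4\delta+K') = 62\delta$, i.e.\ $K' = 27\delta$), while the accumulated thin-triangle slack and the projection losses of Corollary \ref{cor:distproj} contribute the additive $243\delta$. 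The main obstacle is that the distance $d(y, \ell)$ has no a priori bound depending only on $\delta$; only the interplay of the two symmetric relations $hgy \approx y$ and $h^{-1}y \approx gy$ with the $12\delta$-quasi-convexity of $\ell$ absorbs this dependency and produces a uniform constant.
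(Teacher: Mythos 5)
Your proposal has a fundamental gap: it never uses the hypothesis that $h$ is elliptic. The two relations you extract from the ellipticity of $hg$, namely $d(y,hgy)\leq C_0$ and $d(h^{-1}y,gy)\leq C_0$, are in fact the same relation (apply the isometry $h^{-1}$ to the first to get the second), so analysing the triangle $(y,gy,h^{-1}y)$ gives no information beyond what the triangle $(y,gy,hgy)$ already gave. In particular, knowing that $m\in\ell$ lies close to the geodesic $[y,h^{-1}y]$ does not ``position $h^{-1}m$ close to $m$'': it tells you where $m$ sits relative to a geodesic, but nothing about where $h^{-1}$ sends $m$, since $h^{-1}[y,h^{-1}y]=[h^{-1}y,h^{-2}y]$ is a geodesic whose second endpoint is completely uncontrolled. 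A clean counter-example to the argument you sketch is $h=g^{-1}$: then $hg=\mathrm{id}$ is elliptic and one can take $y$ to be any point with $C_0=0$, all your fellow-travelling steps go through perfectly, yet $h$ translates every point of $\ell$ by $[g]\geq 525\delta$ and the conclusion is false. This shows that the ellipticity of $h$ is not an obstacle to be absorbed but an essential input.

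The paper's proof uses the two ellipticity hypotheses in very different ways, and the asymmetry is the whole point. The ellipticity of $h$ is used constructively: one picks $x$ with $\mathrm{diam}(\langle h\rangle\cdot x)\leq 17\delta$, so that $hx$ is already close to $x$, and then compares the foot-points of $[x,y]$ and $h\cdot[x,y]$ on $\ell$ via Corollary~\ref{cor:foot}. The ellipticity of $hg$ is used only by contradiction: the proof splits into two cases according to whether $h$ approximately preserves distances of projections onto $\ell$ of points far from $\pi(x)$; in the favourable case one bounds $d(\pi(x),h\pi(x))$ directly, and in the unfavourable case one shows $d(x,(hg)^2x)>d(x,hgx)+18\delta$, which by \cite[Corollaire 8.22]{GhysdelaHarpe} forces $hg$ to be loxodromic, contradiction. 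Your proposal has no analogue of the dichotomy and no use of the ellipticity of $h$, so it cannot be repaired by merely sharpening constants; you would need to start from a near-fixed point of $h$, not of $hg$.
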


\begin{proof}
For convenience, fix a $G$-equivariant map $\pi : X \to \ell$ sending every point of $X$ to one of its nearest-point projections, and set $M= 2M(\delta,1,62\delta)$. Because $h$ is elliptic, we know from \cite[Lemma III.$\Gamma$.3.3]{BH} that there exists some $x \in X$ such that $\langle h \rangle \cdot x$ has diameter at most $17 \delta$. 

\medskip \noindent
First, suppose that there exists some $y \in  X$ such that the distances $d(\pi(x),\pi(y))$ and $d(\pi(hy),\pi(hx))$ are both greater than $96\delta$. Fix a geodesic $[x,y]$. We know from Corollary \ref{cor:foot} that there exists a point $z \in [x,y]$ satisfying $d(x,z)=d(x,\pi(x))$ such that $d(z,\pi(x)) \leq 2M$. Similarly, we know from Corollary \ref{cor:foot} that there exists a point $w \in h \cdot [x,y]$ satisfying $d(hx,w)=d(hx,\pi(hx))$ such that $d(w,\pi(hx)) \leq 2M$. Notice that
$$\begin{array}{lcl} d(hz,w) & = & |d(hx,w)- d(hx,hz)| = | d(hx,\pi(hx)) - d(x, \pi(x))| \\ \\ & \leq & d(x,hx)+d(\pi(x),\pi(hx)) \leq 130\delta \end{array}$$
where the last inequality is justified by Corollary \ref{cor:distproj}. Consequently,
$$\begin{array}{lcl} d(z,hz) & \leq & d(z,\pi(x)) + d(\pi(x),\pi(hx)) + d(\pi(hx),w)+d(w,hz) \\ \\ & \leq & 2M + (17\delta + 96 \delta) +2M + 130 \delta = 4M+243 \delta \end{array}$$
We conclude that $\pi(x)$ is a point satisfying the conclusion of our lemma, since
$$d(\pi(x),h \pi(x)) \leq d(z,hz)+2d(z,\pi(x)) \leq 8M+243\delta.$$
Next, suppose that for every $y \in X$ satisfying $d(\pi(x),\pi(y))>96\delta$ one has $d(\pi(hy),\pi(hx)) \leq 96\delta$. Consequently, since 
$$d(\pi(x),\pi(gx))=d(\pi(x),g \pi(x)) = [g] >96\delta,$$ 
it follows that $d(\pi(hx),\pi(hgx)) \leq 96\delta$. So
$$\begin{array}{lcl} d(\pi(gx),\pi(ghgx)) & = & d(\pi(x), \pi(hgx)) \leq d(\pi(x),\pi(hx)) + d(\pi(hx),\pi(hgx)) \\ \\ & \leq & d(x,hx) + 96\delta + 96\delta \leq 209\delta \end{array}$$
where the first inequality of the second line is justified by Corollary \ref{cor:distproj}. Next, since
$$d(\pi(x),\pi(ghgx)) \geq d(\pi(x),\pi(gx))- d(\pi(gx),\pi(ghgx)) \geq [g] - 209\delta > 96\delta,$$
we deduce from Lemma \ref{lem:brokengeod} that
$$\begin{array}{lcl} d(x,ghgx) & \geq & d(x,\pi(x))+d(\pi(x),\pi(ghgx)) +d(\pi(ghgx),ghgx) - 72 \delta. \end{array}$$
By noticing that
$$d(\pi(x), \pi(ghgx)) \geq d(\pi(x),\pi(gx))-d(\pi(gx),\pi(ghgx) \geq [g] - 209\delta$$
and that
$$\begin{array}{lcl} d(\pi(ghgx),ghgx) & = & d(\pi(hgx),hgx) \geq d(hgx,x)-d(x, \pi(hgx)) \\ \\ & \geq & d(x,hgx) - d(x,\pi(x))-d(\pi(x),\pi(hgx)) \\ \\ & \geq & d(x,hgx)-d(x, \pi(x)) - 209\delta \end{array}$$
the previous inequality becomes
$$\begin{array}{lcl} d(x,ghgx) & \geq & d(x,\pi(x)) + [g]-209\delta +d(x,hgx)-d(x,\pi(x)) - 209\delta - 72 \delta \\ \\ & \geq & d(x,hgx) + [g] -490 \delta \end{array}$$
hence
$$d(x,(hg)^2x) \geq d(x,ghgx)-d(x,hx) \geq d(x,hgx) + [g] - 507\delta.$$
Since $[g]>525\delta$ by assumption, it follows that
$$d(x,(hg)^2x) > d(x,hgx)+18 \delta.$$
According to \cite[Corollaire 8.22]{GhysdelaHarpe}, this inequality implies that $hg$ is loxodromic, contradiction our hypotheses.
\end{proof}

%

\subsection{CAT(0) cube complexes}\label{section:cubecomplexes}

\noindent
In this paper, we suppose that the reader is familiar with the basic definitions and properties of CAT(0) cube complexes. For details, we refer to \cite{SageevCAT(0), WiseBook}. Nevertheless, we recall the following fundamental property of cubical complexes, which will be used several times in Section \ref{section:cubicalrigidity} without mentioning it. We refer to \cite[Theorem 11.9]{Roller} for a proof. 

\begin{thm}
Let $G$ be a group acting on some CAT(0) cube complex $X$. If $G$ has a bounded orbit, then $G$ stabilises a cube. As a consequence, the action has a global fixed point.
\end{thm}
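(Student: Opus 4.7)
The plan is to combine the Bruhat--Tits fixed-point theorem for complete CAT(0) spaces with a standard structural observation about cube complexes. First, I would equip $X$ with its piecewise-Euclidean metric $d_{\mathrm{CAT}}$; this makes $X$ into a complete CAT(0) geodesic space (a standard fact, via Bridson--Haefliger, Chapter II, combined with Gromov's link condition for CAT(0)-ness). On each $n$-cube the $\ell^2$-norm is dominated by the $\ell^1$-norm, so $d_{\mathrm{CAT}}$ is everywhere bounded above by the combinatorial edge-path metric; hence any $G$-orbit that is bounded combinatorially is also bounded in $d_{\mathrm{CAT}}$.

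Second, I would apply the Bruhat--Tits theorem: a group acting by isometries on a complete CAT(0) space with a bounded orbit fixes a point, obtained as the circumcenter of the closed convex hull of the orbit. This yields a $G$-fixed point $p \in X$, which already settles the ``global fixed point'' assertion. To promote $p$ to a stabilised cube, I would use the fact that the relatively open cubes of $X$ partition the underlying space, so $p$ lies in the relative interior of a unique cube $C(p)$---namely, the intersection of all closed cubes containing $p$. Since the cubical cell structure is invariantly defined, any cellular isometry fixing $p$ must send $C(p)$ to itself; thus $G$ stabilises $C(p)$, completing the argument.

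The main technical point, and likely the only place where care is needed, is the completeness of $d_{\mathrm{CAT}}$ when $X$ has unbounded dimension, since the cleanest Bridson--Haefliger statement presupposes boundedly many cell shapes. When this is an issue, one can reduce to the finite-dimensional setting by first passing to a $G$-invariant subcomplex, for instance the combinatorial convex hull of a bounded vertex orbit, before applying Bruhat--Tits; alternatively, one can bypass the CAT(0) metric entirely and run a purely combinatorial argument based on the median structure, as in Roller's original treatment.
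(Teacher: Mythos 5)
Your argument via Bruhat--Tits is sound in the finite-dimensional case: there the cube complex has finitely many isometry types of cells, so Bridson's theorem gives completeness of $d_{\mathrm{CAT}}$, the $\ell^2\leq\ell^1$ comparison converts a bounded combinatorial orbit into a bounded $d_{\mathrm{CAT}}$-orbit, Bruhat--Tits produces a fixed point $p$, and the unique open cube containing $p$ is stabilised. This is a perfectly good proof of the statement under a dimension hypothesis, and in fact it is all the present paper needs, since in Section~\ref{section:cubicalrigidity} the theorem is only invoked for finite-dimensional complexes and their Roller-boundary components, which have strictly smaller dimension by Lemma~\ref{lem:dimcubicalcomp}.

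However, the statement as written has no dimension hypothesis, and your first proposed fix for the infinite-dimensional case does not work: passing to the combinatorial convex hull of a bounded orbit does \emph{not} reduce to finite dimension. In the CAT(0) cube complex $\{0,1\}^{(\mathbb{N})}$ (finitely supported binary sequences, cubes spanned by standard basis directions), the set $\{0\}\cup\{e_i : i\geq 1\}$ has combinatorial diameter $2$, yet its convex hull is the entire complex, which is infinite-dimensional and whose $d_{\mathrm{CAT}}$ is not complete (the points $a_n=(1/2,1/4,\dots,1/2^n,0,\dots)$ form a Cauchy sequence with no limit in the complex). So Bruhat--Tits cannot be applied after that reduction; whether the circumcenter happens to lie in the complex would require a separate argument you have not supplied. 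Your second suggestion is the right one and is the route the paper takes: \cite[Theorem 11.9]{Roller} (in the spirit of Gerasimov's argument) works entirely with the median/pocset structure of the vertex set and produces the stabilised cube combinatorially, with no appeal to the CAT(0) metric and hence no completeness issue. The paper cites that reference rather than reproving it, so to close the gap you should either restrict the statement to finite-dimensional complexes (sufficient here) or actually carry out the combinatorial circumcenter argument instead of deferring to it as an alternative.
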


\noindent
The rest of this section is dedicated to some properties of Roller boundary of CAT(0) cube complexes and of the hyperbolic model introduced in \cite{MoiHypCube}. These statements will be useful in Section \ref{section:cubicalrigidity}.

\paragraph{Roller boundary.} Let $X$ be a CAT(0) cube complex. An \emph{ultrafilter} $\sigma$ is a collection of halfspaces of $X$ such that
\begin{itemize}
	\item $\sigma$ contains exactly one of the two halfspaces delimited by a given hyperplane;
	\item if $D_1$ and $D_2$ are two halfspaces satisfying $D_1 \subset D_2$, then $D_1 \in \sigma$ implies $D_2 \in \sigma$.
\end{itemize}
For every vertex $x \in X$, the collection $\sigma_x$ of all the halfspaces of $X$ containing $x$ is the \emph{principal ultrafilter} defined by $x$. 

\medskip \noindent
The \emph{Roller compactification} of $X$ is the graph $\overline{X}$ whose vertices are the ultrafilters of $X$ and whose edges link two ultrafilters whenever their symmetric difference has cardinality two. The Roller compactification is usually not connected, but each connected component turns out to be a median graph (which we identify canonically with a CAT(0) cube complex; see \cite{mediangraphs}). Moreover, the map $x \mapsto \sigma_x$ defines an embedding $X^{(1)} \hookrightarrow \overline{X}$ whose image is a connected component of $\overline{X}$. We refer to the connected components of $\overline{X}$ as its \emph{cubical components}, and we identify $X$ with the cubical component of the principal ultrafilters. The \emph{Roller boundary} of $X$ is $\mathfrak{R}X := \overline{X} \backslash X$. 

\medskip \noindent
Finally, we define a topology on $\overline{X}$, and a fortiori on $\mathfrak{R}X$, as follows. By labelling the two halfspaces delimited by a given hyperplane with $0$ and $1$, we can naturally thought of $\overline{X}$ as a subset of $\{0,1\}^{\mathcal{H}}$, where $\mathcal{H}$ denotes the set of all the hyperplanes of $X$. The topology of $\overline{X}$ is the topology induced by the product topology on $\{0,1\}^{\mathcal{H}}$. Since $\overline{X}$ is closed in $\{0,1\}^{\mathcal{H}}$, it follows that $\overline{X}$ is compact. More details about Roller boundary can be found in \cite{SageevCAT(0), Roller}.

\medskip \noindent
The following statement provides a useful trick when arguing by induction on the dimension.

\begin{lemma}\label{lem:dimcubicalcomp}
Let $X$ be a finite-dimensional CAT(0) cube complex. For every cubical component $Y \subset \mathfrak{R}X$, the inequality $\dim(Y)<\dim(X)$ holds.
\end{lemma}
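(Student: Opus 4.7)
The plan is to argue by contradiction: assume $\dim(Y) = \dim(X) =: n$ (the inequality $\dim(Y) \leq \dim(X)$ is automatic, since pairwise transverse hyperplanes of $Y$ lift to pairwise transverse hyperplanes of $X$) and produce $n+1$ pairwise transverse hyperplanes of $X$. First I would pick a top-dimensional cube of $Y$ and a vertex $\sigma$ of it; the $n$ edges of this cube at $\sigma$ correspond to $n$ pairwise transverse hyperplanes $h_1, \ldots, h_n$ of $X$, whose halfspaces $D_1, \ldots, D_n$ containing $\sigma$ must each be minimal in $\sigma$ (otherwise the flipped ultrafilters would not remain in $\overline{X}$).

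Next I would invoke the standard fact that pairwise transverse hyperplanes of a CAT(0) cube complex bound a common $n$-cube $Q \subset X$, and let $v$ denote the vertex of $Q$ lying in $D_1 \cap \cdots \cap D_n$. The key observation, which is really where the choice of $v$ matters, is that each $D_i$ is also minimal in the principal ultrafilter $\sigma_v$. Indeed, if a halfspace $D \in \sigma_v$ were strictly contained in $D_i$, then the edge of $Q$ based at $v$ and crossing $h_i$ would have to leave $D$ as well, contradicting the fact that each edge of $X$ crosses exactly one hyperplane.

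Since $\sigma \in \mathfrak{R}X$ is non-principal while $\sigma_v$ is principal, the two ultrafilters sit in distinct cubical components of $\overline{X}$, so their symmetric difference is infinite; in particular there exists a hyperplane $h$ on which $\sigma$ and $\sigma_v$ disagree. Because $D_i$ belongs to both $\sigma$ and $\sigma_v$, necessarily $h \neq h_i$ for every $i$. Writing $h^+ \in \sigma$ and $h^- \in \sigma_v$ for the two halfspaces of $h$, one already sees $h^+ \cap D_i \neq \emptyset$ (both sides lie in $\sigma$) and $v \in h^- \cap D_i$. If we had $h^+ \subseteq D_i$, the strict inclusion $h^+ \subsetneq D_i$ (forced by $h \neq h_i$) with $h^+ \in \sigma$ would violate the minimality of $D_i$ in $\sigma$; symmetrically, $h^- \subseteq D_i$ would violate the minimality of $D_i$ in $\sigma_v$. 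Hence $h$ is transverse to each $h_i$, and $\{h, h_1, \ldots, h_n\}$ provides the required $n+1$ pairwise transverse hyperplanes of $X$, contradicting $\dim(X) = n$.

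The genuinely delicate point is the choice of $v$: an arbitrary vertex of $D_1 \cap \cdots \cap D_n$ would not suffice, because minimality of $D_i$ in $\sigma_v$ is precisely what rules out the case $h^- \subsetneq D_i$. Taking $v$ to be the distinguished corner of the cube spanned by the $h_i$'s is exactly what enforces this minimality and makes the rigidity of the four-quadrant analysis work.
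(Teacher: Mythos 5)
Your argument is correct. Since the paper does not supply its own proof---it defers to \cite[Proposition 4.29]{RollerMedian}, where the statement is established in the broader setting of median spaces---there is nothing to compare against line by line, but your self-contained combinatorial proof is sound and is exactly the kind of direct argument one would expect in the purely cubical setting.

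To spell out the few places a reader might pause: the reduction $\dim(Y)\le\dim(X)$ is fine because transversality of two hyperplanes of the median graph $Y$ forces the corresponding four quadrants in $X$ to be non-empty (otherwise one of the four flipped ultrafilters in the square of $Y$ would fail to be upward closed). The minimality of each $D_i$ in $\sigma$ is precisely the condition that the adjacent ultrafilter flipping $h_i$ is a genuine ultrafilter, so it is automatic from the existence of the $n$-cube of $Y$ at $\sigma$. The minimality of $D_i$ in the principal ultrafilter $\sigma_v$ holds exactly because $v$ is chosen as the corner of the spanning cube $Q$ lying in $D_1\cap\cdots\cap D_n$: each of the $n$ edges of $Q$ at $v$ crosses only its own hyperplane, so no halfspace through $v$ can sit strictly inside any $D_i$. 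You correctly identify this choice of $v$ as the crux; an arbitrary vertex of $D_1\cap\cdots\cap D_n$ would not do. The final four-quadrant analysis then delivers transversality of $h$ with every $h_i$, and the Helly/cube-spanning property of pairwise transverse hyperplanes yields an $(n+1)$-cube in $X$, giving the contradiction. This is a clean, elementary argument that avoids any appeal to the median-space machinery invoked by the cited reference.
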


\noindent
A proof can be found for instance in \cite[Proposition 4.29]{RollerMedian}, in the more general context of median spaces. 

\paragraph{Hyperbolic model of cube complexes.} In \cite{MoiHypCube}, we introduced a hyperbolic model (depending on a parameter) of CAT(0) cube complexes. Below, we recall the first definitions and properties, and we prove a proposition related to its Gromov-boundary. 

\begin{definition}
Let $X$ be a CAT(0) cube complex and $L \geq 0$ an integer. A \emph{facing triple} is the data of three pairwise disjoint hyperplanes such that no one separates the other two. Two hyperplanes $J_1,J_2$ are \emph{$L$-well-separated} if they are not transverse and if every collection of hyperplanes transverse to both $J_1$ and $J_2$ which does not contain any facing triple has cardinality at most $L$. An isometry $g \in \mathrm{Isom}(X)$ is \emph{$L$-contracting} if it \emph{skewers} a pair of $L$-well-separated hyperplanes, ie., if there exist two $L$-well-separated hyperplanes $J_1,J_2$ delimiting two halfspaces $D_1,D_2$ respectively such that $g D_2 \subsetneq D_1 \subset D_2$.
\end{definition}

\noindent
The terminology ``$L$-contracting isometry'' is justified as follows. In an arbitrary metric space $X$, a \emph{contracting isometry} usually refers to an isometry $g \in \mathrm{Isom}(X)$ such that there exists some point $x_0 \in X$ satisfying the following two conditions:
\begin{itemize}
	\item the orbit map $n \mapsto g^n \cdot x_0$ defines a quasi-isometric embedding $\mathbb{Z} \hookrightarrow X$;
	\item there exists some $B \geq 0$ such that the nearest-point projection of any ball disjoint from $\langle g \rangle \cdot x_0$ onto $\langle g \rangle \cdot x_0$ has diameter at most $B$.
\end{itemize}
In \cite[Theorem 3.13]{article3}, we proved the following characterisation:

\begin{prop}
Let $X$ be a CAT(0) cube complex. An isometry $g \in \mathrm{Isom}(X)$ is contracting if and only if there exists some $L \geq 0$ such that $g$ is $L$-contracting.
\end{prop}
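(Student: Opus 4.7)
The plan is to handle the equivalence by showing separately that $L$-contraction implies metric contraction, and then by contraposition that metric contraction forces some skewered pair to be $L$-well-separated. Throughout I will exploit the standard combinatorial fact that distance in the $1$-skeleton equals the number of hyperplanes separating two vertices, which lets me translate metric statements into hyperplane counting.

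For the easy direction ($L$-contracting $\Rightarrow$ contracting), fix a pair $J_1, J_2$ of $L$-well-separated hyperplanes bounding halfspaces with $gD_2 \subsetneq D_1 \subsetneq D_2$. The translates $g^k J_1$ ($k \in \mathbb{Z}$) are pairwise disjoint and nested, so for any base vertex $x_0 \in D_2 \setminus gD_2$ each $g^k J_1$ with $0 \le k < n$ separates $x_0$ from $g^n x_0$. This yields $d(x_0, g^n x_0) \ge n$ and hence the quasi-isometric embedding condition. For the bounded projection property, consider a combinatorial axis $Y = \bigcup_k g^k [x_0, gx_0]$ and two vertices $y, y'$ in a ball disjoint from $Y$ with nearest-point projections $p, p'$ onto $Y$. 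Choose $k$ so that the slab between $g^k J_1$ and $g^k J_2$ lies strictly between $p$ and $p'$. Any hyperplane $H$ crossing a combinatorial geodesic $[y,y']$ that also separates $p$ from $p'$ must be transverse to both $g^k J_1$ and $g^k J_2$. After discarding the facing triples inside this family (which can be controlled by a Ramsey-style argument, since three pairwise disjoint hyperplanes separating $y$ from $y'$ cannot all fail to separate one another), $L$-well-separation bounds the remainder by $L$, giving a uniform bound on $d(p, p')$ in terms of $L$ and the radius of the ball.

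For the converse, I would argue contrapositively: assuming that no pair of hyperplanes skewered by $g$ is $L$-well-separated for any $L$, I produce sequences of vertices off a combinatorial axis whose projections diverge while the vertices themselves stay close, contradicting metric contraction. Given a skewered pair $J_1, J_2$ and a facing-triple-free family $\mathcal{F}$ of hyperplanes of size $>L$ transverse to both, Helly's theorem for halfspaces in cube complexes guarantees a vertex $z$ lying on the "opposite" side of every hyperplane in $\mathcal{F}$ from $x_0$ while remaining in the slab between $J_1$ and $J_2$. Repeating with larger $L$ along translates $g^k J_1, g^k J_2$ produces vertices at uniformly bounded distance from $Y$ whose projections onto $Y$ are arbitrarily far apart, ruling out metric contraction.

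The main obstacle is the careful separation between the combinatorial data controlled by well-separation (transverse families containing no facing triple) and the metric data entering the definition of contraction. A priori, a geodesic between two points may cross many hyperplanes transverse to a given slab, and well-separation only bounds those that avoid facing triples; so the delicate point is to argue that facing triples along such a geodesic can be filtered out without destroying the projection estimate. This is exactly where the disc diagram machinery of \cite{MoiHypCube} is used in the referenced proof of Theorem 3.13, and a rigorous version of the plan above would need to reproduce that filtration argument rather than sweep it under a Ramsey-style remark.
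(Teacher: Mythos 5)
The paper does not actually prove this proposition; it states it and attributes the proof to \cite[Theorem~3.13]{article3}, so there is no in-paper argument to compare against. Evaluated on its own terms, your sketch captures the right overall architecture (skewered well-separated hyperplanes give linear growth for the quasi-isometric embedding; well-separation should bound projections; contrapositive for the converse), but it has a genuine gap in the forward direction that you yourself flag at the end. The assertion that \emph{any hyperplane crossing $[y,y']$ and separating $p$ from $p'$ must be transverse to both $g^kJ_1$ and $g^kJ_2$} is not correct as stated: a hyperplane separating $p$ from $p'$ need only cross the axis somewhere between them, and nothing forces it to meet the chosen slab, let alone be transverse to both of its walls. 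One first has to show that hyperplanes separating $y$ from $y'$ that also cross the axis between the projections actually \emph{enter} the slab, and that the ones that do so can be organized so that well-separation applies; this is precisely the filtration/disc-diagram step you correctly identify as missing, and it cannot be replaced by a quick Ramsey-type observation because nested chains have no facing triples yet can be arbitrarily long.

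The converse sketch has a similar unresolved step: Helly does produce a vertex $z$ on the far side of a large facing-triple-free transverse family inside the slab, but to contradict metric contraction you need $z$ to stay at \emph{uniformly bounded} distance from the combinatorial axis while its projection onto the axis drifts, and the construction as written does not control the distance from $z$ to the axis. Without that control one only gets vertices whose projections move, which is consistent with contraction. So the proposal is a reasonable outline of the standard strategy, but both directions would need the quantitative hyperplane-interaction lemmas (the ``grid'' or disc-diagram machinery of \cite{MoiHypCube}) to become a proof; as it stands, the two central estimates are asserted rather than established.
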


\noindent
Consequently, our terminology agrees with the usual terminology which can be found in the litterature.

\medskip \noindent
Given a CAT(0) cube complex $X$ and an integer $L \geq 0$, one next defines a new metric on (the set of vertices of) $X$ by:
$$\delta_L : (x,y) \mapsto \begin{array}{c} \text{maximal number of pairwise $L$-well-separated} \\ \text{hyperplanes separating $x$ and $y$} \end{array}$$
We showed in \cite{MoiHypCube} that $\delta_L$ is indeed a metric, and we proved the following statement:

\begin{thm}\label{thm:hypmodel}
Let $X$ be a CAT(0) cube complex and $L \geq 0$ some integer. The metric space $(X,\delta_L)$ is hyperbolic, and an isometry of $X$ defines a loxodromic isometry of $(X,\delta_L)$ if and only if it $L$-contracting. 
\end{thm}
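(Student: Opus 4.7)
The plan is to treat the two assertions of Theorem \ref{thm:hypmodel} separately. The hyperbolicity of $(X,\delta_L)$ should be proved first, after which the characterization of loxodromic isometries will follow from a standard ping-pong / skewering argument.

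For the hyperbolicity, I would identify a convenient family of candidate geodesics: given two vertices $x,y \in X$, take any combinatorial geodesic $\gamma$ in $X^{(1)}$. Since combinatorial geodesics cross each hyperplane of $X$ at most once, and since each pair of $L$-well-separated hyperplanes between $x$ and $y$ is crossed by $\gamma$, one shows that $\gamma$ is a $(1,C)$-quasi-geodesic in $(X,\delta_L)$ for some constant $C=C(L)$. Then I would verify the Gromov four-point condition directly: given four vertices $x,y,z,w$ and a chain of pairwise $L$-well-separated hyperplanes realizing $\delta_L(x,y)$, each hyperplane in the chain is crossed by any combinatorial geodesic $[x,y]$, hence can be ``routed'' through the cubical medians $m(x,z,w)$, $m(y,z,w)$, etc., onto chains connecting the other pairs. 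The key combinatorial lemma is a transitivity-in-nesting statement: if $J_2$ lies between $J_1$ and $J_3$, then $L$-well-separation of $\{J_1,J_3\}$ forces comparable control on at least one of the nested pairs $\{J_1,J_2\}$, $\{J_2,J_3\}$. Together with an application of Helly's property for halfspaces, this should yield the 4-point inequality with a constant depending only on $L$.

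For the characterization of loxodromics, the easy direction is: if $g$ is $L$-contracting with $L$-well-separated pair $J_1,J_2$ skewered so that $gD_2 \subsetneq D_1 \subset D_2$, then the bi-infinite family $\{g^k J_1, g^k J_2 \mid k \in \mathbb{Z}\}$ consists of pairwise disjoint, nested hyperplanes, and pairwise $L$-well-separated (since $L$-well-separation is preserved by isometries and under nesting into sub-collections). For $x$ situated between $J_1$ and $J_2$ this yields $\delta_L(x,g^n x) \geq n$, so $g$ is loxodromic in $(X,\delta_L)$. The converse requires extracting a skewered $L$-well-separated pair from the hypothesis that the orbit map $n \mapsto g^n x$ is a quasi-isometric embedding into $(X,\delta_L)$. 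My approach would be: pick $n$ large, take a maximal chain $H_1,\ldots,H_k$ of pairwise $L$-well-separated hyperplanes separating $x$ from $g^n x$ with $k = \delta_L(x,g^n x)$, then push this chain forward under powers of $g$ and apply a pigeonhole argument on the finitely many possible ``nesting positions'' of $gH_i$ relative to the $H_j$'s. A suitable choice produces two hyperplanes in a single $g$-orbit forming a skewered $L$-well-separated pair.

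The main obstacle will be the hyperbolicity step, specifically the combinatorial lemma describing how $L$-well-separation behaves under nesting and medians. The difficulty is that $\delta_L$ discards a great deal of information from the CAT(0) metric, so one must argue that the surviving ``$L$-chains'' of hyperplanes still enjoy a tree-like arrangement up to uniform error. Once that combinatorial backbone is in place, both the four-point condition and the pigeonhole extraction for the converse direction should follow without further significant input, and the constants obtained will depend only on $L$ as required.
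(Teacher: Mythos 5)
Theorem \ref{thm:hypmodel} is not proved in this paper at all: it is quoted verbatim from the author's earlier work \cite{MoiHypCube}, alongside Lemmas \ref{lem:deltaI} and \ref{lem:GromovProductEst}. There is therefore no in-paper argument to compare your outline against; a genuine proof would have to be supplied from scratch.

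That said, two concrete problems stand out in your sketch. First, your ``transitivity-in-nesting'' lemma points the wrong way. The correct statement is that $L$-well-separation propagates \emph{outward}: if $J_2$ lies between $J_1$ and $J_3$, then any hyperplane transverse to both $J_1$ and $J_3$ is forced (by connectedness) to cross $J_2$ as well, so $L$-well-separation of the inner pair $\{J_1,J_2\}$ implies $L$-well-separation of the outer pair $\{J_1,J_3\}$. This is exactly what makes your easy direction of the loxodromic characterization work: the bi-infinite nested family $\{g^kJ_1,g^kJ_2\}_{k\in\mathbb{Z}}$ is pairwise $L$-well-separated because each pair contains a nested subpair which is a $g$-translate of $\{J_1,J_2\}$. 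But the inward implication you invoke for the four-point estimate --- that $L$-well-separation of the outer pair $\{J_1,J_3\}$ forces control on one of the nested pairs --- can fail: a hyperplane transverse to $J_1$ and $J_2$ may lie entirely on the $J_2$-side of $J_3$ and so contribute nothing to the count for $\{J_1,J_3\}$. The median reduction you have in mind is precisely Lemma \ref{lem:GromovProductEst}, which is the right tool, but the combinatorial lemma you need to feed into it is not the one you stated. Second, the pigeonhole step in the hard direction of the loxodromic characterization is not yet an argument: the number of ``nesting positions'' of $gH_i$ relative to the chain $H_1,\ldots,H_k$ grows with $k$, so there is no finite menu to pigeonhole over, and extracting a skewered $L$-well-separated pair from a long chain requires a real counting step, for instance comparing the growth $\delta_L(x,g^nx)\gtrsim n$ against the bounded number of hyperplanes separating consecutive orbit points $g^ix$ and $g^{i+1}x$. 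As written, this converse direction is the genuine gap.
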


\noindent
In the rest of the section, we would like link the Gromov-boundary of $(X,\delta_L)$ with the Roller boundary of $X$. Notice that it is not clear whether $(X,\delta_L)$ is a geodesic metric space, so, given a basepoint $x_0 \in X$, the boundary will be defined as the quotient of the collection of sequences $(x_i)$ satisfying $(x_i,x_j)_{x_0} \underset{i,j \to + \infty}{\longrightarrow} + \infty$ modulo the equivalence relation: $(y_i) \sim (z_i)$ if $(y_i,z_i)_{x_0} \underset{i,j \to + \infty}{\longrightarrow} + \infty$. (Nevertheless, it follows from \cite[Lemma 6.55]{MoiHypCube} that $(X,\delta_L)$ is a quasigeodesic metric space, so the boundary can also be defined as the asymptotic classes of quasigeodesic rays.) Our main statement is:

\begin{prop}\label{prop:boundarymap}
Let $X$ be a CAT(0) cube complex and $L \geq 0$ an integer. There exists an $\mathrm{Isom}(X)$-equivariant map sending a point of $\partial(X,\delta_L)$ to a subset of diameter at most $L$ in a cubical component of $\mathfrak{R}X$ .
\end{prop}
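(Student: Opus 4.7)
The approach is to define, for each $\xi \in \partial(X,\delta_L)$, the set $S_\xi \subset \overline{X}$ of all subsequential limits, in the product topology on $\overline{X} \subset \{0,1\}^{\mathcal{H}}$, of the principal ultrafilters $(\sigma_{x_n})$ as $(x_n)$ ranges over $\delta_L$-quasigeodesic rays representing $\xi$. I then need to check: (i) $S_\xi$ is a nonempty subset of $\mathfrak{R}X$; (ii) $S_\xi$ sits inside a single cubical component with diameter at most $L$; (iii) the assignment is equivariant. Equivariance is automatic from the construction, since $\mathrm{Isom}(X)$ permutes hyperplanes, acts continuously on $\overline{X}$, and preserves $\delta_L$ (the condition of $L$-well-separation being isometry-invariant).

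For (i), note that $\delta_L \leq d_X$ since $\delta_L$ counts a subcollection of the hyperplanes separating two vertices, so any $\delta_L$-quasigeodesic ray $(x_n)$ satisfies $d_X(x_0, x_n) \to \infty$; a subsequential limit $\sigma \in \overline{X}$ exists by compactness. If $\sigma = \sigma_x$ with $x \in X$, then product-topology convergence applied to the finite collection of hyperplanes in the star of $x$ forces $x_n = x$ for large $n$, contradicting unboundedness. So $\sigma \in \mathfrak{R}X$. Existence of $\delta_L$-quasigeodesic rays uses the quasigeodesicity of $(X,\delta_L)$ recalled in the excerpt.

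The heart of the proof is (ii). Let $\sigma, \tau \in S_\xi$ be realized as limits of representing rays $(x_n), (y_m)$, and let $\mathcal{H}_{\sigma,\tau}$ denote the set of hyperplanes on which $\sigma$ and $\tau$ disagree; the goal is $|\mathcal{H}_{\sigma,\tau}| \leq L$. Choose an infinite chain $(J_k)$ of pairwise $L$-well-separated hyperplanes crossed by $(x_n)$, oriented so that $x_0 \in D_k^c$ and $x_n \in D_k$ for large $n$. Using that equivalent $\delta_L$-quasigeodesic rays Morse-fellow-travel in $(X,\delta_L)$, one argues that $\tau$ agrees with $\sigma$ on every $J_k$: otherwise $\delta_L(x_n, y_m)$ would grow at the rate of the chain, contradicting the fellow-traveling. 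Next, one shows that every $H \in \mathcal{H}_{\sigma,\tau}$ must be transverse to infinitely many $J_k$: if $H$ is disjoint from all $J_k$ with $k \geq k_0$, a direct geometric analysis shows that the halfspace of $H$ containing the deeply-nested $D_k$'s is forced upon both $\sigma$ and $\tau$, contradicting $H \in \mathcal{H}_{\sigma,\tau}$.

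Now assume for contradiction that $|\mathcal{H}_{\sigma,\tau}| \geq L+1$, pick $L+1$ hyperplanes from it, and take $k < k'$ large enough that all of them are transverse to both $J_k$ and $J_{k'}$. By $L$-well-separation, some three of them must form a facing triple. But in any facing triple the three outer halfspaces are pairwise disjoint, giving the nesting $D_j^c \subset D_i$ for $i \neq j$; ultrafilter consistency then forces each ultrafilter to pick at most one outer halfspace, so two ultrafilters cannot disagree on all three members of a facing triple. Contradiction. The delicate technical input is the fellow-traveling of equivalent $\delta_L$-quasigeodesic rays used in (ii), which relies on the Morse property in the hyperbolic model $(X,\delta_L)$ established in \cite{MoiHypCube}; the remainder is a combinatorial interplay between $L$-well-separation, facing triples, and ultrafilter consistency.
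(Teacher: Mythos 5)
Your overall architecture mirrors the paper's: take accumulation points in $\overline{X}$ of sequences representing $\xi$, show they lie in $\mathfrak{R}X$, and bound the number of separating hyperplanes by $L$ via well-separation together with the fact that a wall interval contains no facing triple. Parts (i), (iii), and the closing facing-triple/ultrafilter-consistency step are sound. The gap is in the central step of (ii).

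You assert that every $H$ on which $\sigma$ and $\tau$ disagree must be transverse to infinitely many $J_k$, and you justify this by claiming that if $H$ misses all $J_k$ with $k\geq k_0$ then ``the halfspace of $H$ containing the deeply-nested $D_k$'s is forced upon both $\sigma$ and $\tau$.'' That covers only the configuration in which some halfspace $D$ of $H$ contains $D_{k_1}$ for some $k_1$ (then $D\in\sigma$ and $D\in\tau$ by ultrafilter consistency, since $D_{k_1}$ lies in both). It does not address the case where $H$ is nested strictly past the whole chain, i.e.\ one halfspace $D$ of $H$ satisfies $D\subset D_k$ for every $k$. Then no halfspace of $H$ contains any $D_k$, there is nothing to be ``forced,'' and I see no reason why $\sigma$ and $\tau$ must agree on $H$: the $\delta_L$-fellow-travelling you invoke only controls the number of pairwise $L$-well-separated hyperplanes between $x_n$ and $y_m$, and a single hyperplane $H$ lurking beyond all the $J_k$'s contributes nothing to $\delta_L$, so the Morse lemma does not see it. There is also a secondary issue you pass over: extracting a single infinite chain $(J_k)$ of $L$-well-separated hyperplanes crossed by one ray requires a K\"onig-type argument, which you do not give.

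The paper's proof sidesteps both problems by choosing the chain after the disagreement hyperplanes, not before. Given finitely many hyperplanes $J_1,\ldots,J_k$ separating the two limits, it sets $D=\max_j d(x_0,J_j)$; for $i$ large, Lemma \ref{lem:GromovProductEst} gives $\delta_L(x_0,m(x_0,y_i,z_i))\geq D+2$, hence a finite chain $H_1,\ldots,H_p$ of pairwise $L$-well-separated hyperplanes separating $x_0$ from the median, with $p\geq D+2$. Since each $J_j$ separates $y_i$ from $z_i$ (both on the far side of $H_p$) yet lies at distance at most $D$ from $x_0$, it cannot be contained in the far halfspace of $H_{D+1}$ or $H_{D+2}$ and therefore crosses both. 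This distance-from-basepoint argument is precisely what your ``forced halfspace'' analysis is missing: pushing the chain past the disagreement hyperplanes rules out the ``$H$ deeper than the chain'' configuration. Your route can be repaired in the same spirit (after fixing the $L+1$ candidate hyperplanes, choose $k$ large enough that $d(x_0,J_k)$ exceeds the maximal distance from $x_0$ to those hyperplanes, then argue transversality to $J_k,J_{k+1}$ as the paper does), but as written the key transversality claim is not established.
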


\noindent
First, we recall \cite[Lemma 6.55]{MoiHypCube}, which essentially states that the quasigeodesics in $(X,\delta_L)$ fellow-travel the geodesics in $X$. 

\begin{lemma}\label{lem:deltaI}
Let $X$ be a CAT(0) cube complex and $x,y,z \in X$ three vertices such that $z$ belongs to a geodesic between $x$ and $y$ in $X$. Then
$$\delta_L(x,z)+\delta_L(z,y)- 2(L+3) \leq \delta_L(x,y) \leq \delta_L(x,z)+\delta_L(z,y).$$
\end{lemma}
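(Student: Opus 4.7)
The plan is to prove the two inequalities separately.

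The upper bound $\delta_L(x,y)\leq \delta_L(x,z)+\delta_L(z,y)$ is immediate. Since $z$ lies on a combinatorial geodesic between $x$ and $y$, every hyperplane separating $x$ from $y$ separates exactly one of the pairs $(x,z)$ or $(z,y)$; the corresponding partition splits any maximal pairwise $L$-well-separated family $\mathcal{H}$ realising $\delta_L(x,y)$ into two subfamilies $\mathcal{H}_1,\mathcal{H}_2$. Each inherits pairwise $L$-well-separation, so $|\mathcal{H}_1|\leq \delta_L(x,z)$ and $|\mathcal{H}_2|\leq \delta_L(z,y)$; summing gives the bound.

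For the lower bound, take maximal pairwise $L$-well-separated families $\mathcal{H}_1=\{A_1,\ldots,A_p\}$ for $(x,z)$ and $\mathcal{H}_2=\{B_1,\ldots,B_q\}$ for $(z,y)$, indexed by the order of crossing along a geodesic from $x$ to $y$ so that $A_p$ and $B_1$ are the ones closest to $z$. When $\min(p,q)\leq L+3$ the inequality is vacuous; otherwise I trim the $L+3$ hyperplanes nearest $z$ from each family, forming $\mathcal{H}_1'=\{A_1,\ldots,A_{p-L-3}\}$ and $\mathcal{H}_2'=\{B_{L+4},\ldots,B_q\}$. The union still separates $x$ from $y$, so if one shows that $\mathcal{H}_1'\cup\mathcal{H}_2'$ is pairwise $L$-well-separated, the required estimate follows as $\delta_L(x,y)\geq |\mathcal{H}_1'|+|\mathcal{H}_2'| = \delta_L(x,z)+\delta_L(z,y)-2(L+3)$.

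The new pairs to analyse are those of the form $(A_i,B_j)$ with $i\leq p-L-3$ and $j\geq L+4$. Between $A_i$ and $B_j$ the ``buffer zone'' contains at least $L+3$ hyperplanes $A_{i+1},\ldots,A_p$ of $\mathcal{H}_1$ and at least $L+3$ hyperplanes $B_1,\ldots,B_{j-1}$ of $\mathcal{H}_2$, which are already pairwise $L$-well-separated within each family. I would use this buffer in two steps. First, for the transversal bound: any hyperplane $H$ transverse to both $A_i$ and $B_j$ must also be transverse to every hyperplane disjoint from both and separating them; using the buffer one identifies a pair of such separating hyperplanes that is itself $L$-well-separated (for instance a suitable pair drawn from $\mathcal{H}_1$), and the defining property of that pair immediately caps any facing-triple-free family of such $H$'s at $L$ elements. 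Second, for the non-transversality of $A_i,B_j$ themselves: one locates a buffer hyperplane $C$ disjoint from both, with $A_i$ and $B_j$ in opposite halfspaces of $C$, forcing $A_i\cap B_j=\emptyset$.

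The main obstacle is the non-transversality step. A buffer hyperplane $A_r\in\mathcal{H}_1$ is automatically disjoint from $A_i$ (by $L$-well-separation inside $\mathcal{H}_1$), but could \emph{a priori} be transverse to $B_j$; the symmetric issue occurs for buffer hyperplanes in $\mathcal{H}_2$. Ruling this out requires combining the $L$-well-separation conditions internal to $\mathcal{H}_1$ and $\mathcal{H}_2$ with the combinatorics of ``too many'' mutually disjoint hyperplanes being transverse to a single one, and the buffer size $L+3$ is the minimum slack that allows this ruling-out to go through simultaneously with the transversal-bound step.
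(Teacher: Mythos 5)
Your architecture for the lower bound is the right one, and the upper bound is correct as written. (For reference, the paper itself does not prove this lemma; it imports it from \cite[Lemma~6.55]{MoiHypCube}, so there is no in-text proof to compare against.) The reduction you set up is exactly what is needed: a cross pair $(A_i,B_j)$ is $L$-well-separated as soon as one finds \emph{two} buffer hyperplanes from the \emph{same} family, each separating $A_i$ from $B_j$ (one of them gives non-transversality, and any facing-triple-free family transverse to $A_i$ and $B_j$ is transverse to both separating hyperplanes, hence has size at most $L$ by their mutual $L$-well-separation). But the step you label ``the main obstacle'' is the entire content of the lemma, and you do not close it. Saying that the bad configuration is ruled out by ``combining the $L$-well-separation conditions \dots with the combinatorics of too many mutually disjoint hyperplanes being transverse to a single one'' is an assertion, not an argument; and the heuristic is misleading, since well-separation constrains only families transverse to a \emph{pair} of hyperplanes, so arbitrarily many disjoint hyperplanes transverse to a single $B_j$ is not in itself contradictory. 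That is a genuine gap.

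The gap is fillable, and here is the missing mechanism so you can see what your sketch omits. (i) If a buffer hyperplane $A_t$ ($t>i$) is \emph{disjoint} from $B_j$, it automatically separates $A_i$ from $B_j$: the halfspace of $A_t$ containing $z$ also contains $y$ (because $z$ lies on a geodesic, $A_t$ separates $x$ from $y$), so $B_j$, which separates $z$ from $y$, cannot lie in the $x$-side halfspace of $A_t$. (ii) Dually, if $A_t$ is \emph{transverse} to $B_j$, then $A_t$ is transverse to every $B_w$ with $w<j$: otherwise $A_t$, separating $x$ from $z$ with both points on the $z$-side of $B_w$, would lie in the $z$-side halfspace of $B_w$ while $B_j$ lies in the $y$-side, so $B_w$ would separate $A_t$ from $B_j$ and force them to be disjoint. (iii) Consequently, if more than $L$ of the buffer hyperplanes $A_{i+1},\dots,A_p$ were transverse to $B_j$, they would form a chain (hence a facing-triple-free family) of more than $L$ hyperplanes transverse to both $B_1$ and $B_2$, contradicting the $L$-well-separation of that pair (which exists since $j\geq L+4\geq 3$). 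So at least two buffer hyperplanes on the $A$-side are disjoint from $B_j$, and by (i) they are the separating pair you need. Note that this argument only needs a buffer of size $L+2$, so your claim that $L+3$ is ``the minimum slack'' is inaccurate, though harmless for the stated bound $2(L+3)$.
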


\noindent
As a consequence of the previous lemma, we are able to estimate the Gromov product in $(X,\delta_L)$. (In the following, Gromov products will always refer to the distance $\delta_L$.)

\begin{lemma}\label{lem:GromovProductEst}
Let $X$ be a CAT(0) cube complex, $L \geq 0$ an integer and $x,y,z \in X$ three vertices. Then
$$\left| (x,y)_z - \delta_L(z,m(x,y,z)) \right| \leq 3(L+3),$$
where $m(x,y,z)$ denotes the median point of $x,y,z$. 
\end{lemma}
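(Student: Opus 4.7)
The plan is to exploit the fact that the median point $m := m(x,y,z)$ of three vertices in a CAT(0) cube complex is characterised by the property that it lies on a combinatorial geodesic between any two of $x,y,z$ in $X$. This means Lemma \ref{lem:deltaI} applies with $z := m$ to each of the three pairs $(x,y)$, $(y,z)$, $(z,x)$, giving approximate additivity of $\delta_L$ along broken geodesics passing through $m$.

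More precisely, I would first record the three two-sided inequalities
\begin{align*}
\delta_L(z,m)+\delta_L(m,x)-2(L+3) &\leq \delta_L(z,x) \leq \delta_L(z,m)+\delta_L(m,x), \\
\delta_L(z,m)+\delta_L(m,y)-2(L+3) &\leq \delta_L(z,y) \leq \delta_L(z,m)+\delta_L(m,y), \\
\delta_L(x,m)+\delta_L(m,y)-2(L+3) &\leq \delta_L(x,y) \leq \delta_L(x,m)+\delta_L(m,y),
\end{align*}
obtained by applying Lemma \ref{lem:deltaI} to each pair with $m$ as the intermediate vertex.

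Next, I would substitute these into the definition $(x,y)_z = \tfrac12(\delta_L(z,x)+\delta_L(z,y)-\delta_L(x,y))$. Using the lower bounds on the first two lines and the upper bound on the third gives
\[ 2(x,y)_z \;\geq\; 2\delta_L(z,m) - 4(L+3), \]
while using the upper bounds on the first two lines and the lower bound on the third gives
\[ 2(x,y)_z \;\leq\; 2\delta_L(z,m) + 2(L+3). \]
Dividing by two yields $|(x,y)_z - \delta_L(z,m)| \leq 2(L+3)$, which is stronger than the claimed bound $3(L+3)$.

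The only non-trivial ingredient is the fact that $m$ lies simultaneously on three combinatorial geodesics of $X$, which is immediate from the median structure of CAT(0) cube complexes (e.g.\ because a hyperplane separates two of $x,y,z$ if and only if it separates $m$ from the third, so the hyperplanes separating any two of them split into those separating one from $m$ and those separating $m$ from the other). No real obstacle is expected here — the computation is mechanical once Lemma \ref{lem:deltaI} is in hand.
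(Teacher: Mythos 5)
Your proof is correct and follows essentially the same route as the paper: apply Lemma \ref{lem:deltaI} three times with the median $m$ as the intermediate vertex, which is valid because $m$ lies on a geodesic between each pair of $x,y,z$, and then substitute into the definition of the Gromov product. The only difference is that the paper estimates crudely inside an absolute value, charging an error of $2(L+3)$ to each of the three applications of Lemma \ref{lem:deltaI} and dividing by $2$ to get $3(L+3)$, whereas you exploit the asymmetry of the two-sided bound (the upper bound in Lemma \ref{lem:deltaI} carries no error term) to combine the three inequalities more carefully and obtain the slightly sharper constant $2(L+3)$. This is a minor quantitative refinement rather than a different method; either constant is perfectly adequate for the later use in Proposition \ref{prop:boundarymap}.
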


\begin{proof}
For convenience, set $m=m(x,y,z)$. By applying Lemma \ref{lem:deltaI}, we get
$$\begin{array}{lcl} \left| (x,y)_z - \delta_L(z,m(x,y,z)) \right| & = & \frac{1}{2} \left| \delta_L(z,x)+\delta_L(z,y)- \delta_L(x,y) - 2\delta_L(z,m) \right| \\ \\ & \leq & \frac{1}{2} \left| \delta_L(z,m)+\delta_L(m,x)+ \delta_L(z,m)+ \delta_L(m,y) \right. \\ & & \left. - \delta_L(x,m)- \delta_L(m,y) - 2 \delta_L(z,m) \right| + 3(L+3) \\ \\ & \leq & 3(L+3)\end{array}$$
which concludes the proof.
\end{proof}

\begin{proof}[Proof of Proposition \ref{prop:boundarymap}.]
For every $\xi \in \partial (X,\delta_L)$, we denote by $R(\xi)$ the set of all the accumulation points in $\overline{X}$ of all the sequence of vertices representing $\xi$. We want to prove that $\xi \mapsto R(\xi)$ is the map we are looking for. First of all, notice that $R(\xi)$ is non-empty for every $\xi \in \partial(X,\delta_L)$, as a consequence of the compactness of $\overline{X}$, and that our map is clearly $\mathrm{Isom}(X)$-equivariant. 

\medskip \noindent
Next, we claim that $R(\xi) \subset \mathfrak{R}X$ for every $\xi \in \partial (X,\delta_L)$. Indeed, let $(x_i)$ be a sequence representing $\xi$ and $z \in \overline{X}$ one of its accumulation points. For convenience, suppose that $(x_i)$ converges to $z$ in $\overline{X}$. Because
$$d(x_0,x_i) \geq \delta_L(x_0,x_i) = \delta_L(x_0,m(x_0,x_i,x_i)) \geq (x_i,x_i)_{x_0}- 3(L+3) \underset{i \to + \infty}{\longrightarrow} + \infty,$$
where the last inequality is justified by Lemma \ref{lem:GromovProductEst}, it is clear that $z$ cannot belong to $X$, so it must belong to $\mathfrak{R}X$.

\medskip \noindent
Finally, we need to verify that, given some $\xi \in \partial (X,\delta_L)$, if $(y_i)$ and $(z_i)$ are two sequences representing $\xi$ and converging respectively to $y$ and $z$ in $\overline{X}$, then $y$ and $z$ belong to the same cubical component and there the distance between them is at most $L$.

\medskip \noindent
Let $J_1, \ldots, J_k$ be $k$ hyperplanes such that, for every $1 \leq i \leq k$, the ultrafilters $y$ and $z$ does not contain the same halfspace delimited by $J_i$. Set $D= \max\limits_{1 \leq i \leq k} d(x_0,J_i)$. By definition of the topology of $\overline{X}$, there exists some $N \geq 1$ such that, for every $i \geq N$ and every halfspace $D$ delimited by one the $J_r$'s, $D$ belongs to the principal ultrafilter defined by $y_i$ if and only if $D \in y$ and similarly $D$ belongs to the principal ultrafilter defined by $z_i$ if and only if $D \in z$. It follows that the $J_r$'s separate $y_i$ and $z_i$ for every $i \geq N$. We also want to choose $N$ sufficiently large so that $(y_i,z_i)_{x_0} \geq D+2+3(L+3)$ for every $i \geq N$. Now, fix some $i \geq N$. As a consequence of Lemma \ref{lem:GromovProductEst}, we have
$$\delta_L(x_0,m(x_0,y_i,z_i)) \geq (y_i,z_i)_{x_0} - 3(L+3) \geq D+2.$$
Consequently, there exist $p \geq D+2$ pairwise $L$-well-separated hyperplanes $H_1, \ldots, H_p$ separating $x_0$ and $m:=m(x_0,y_i,z_i)$. Without loss of generality, suppose that $H_j$ separates $H_{j-1}$ and $H_{j+1}$ for every $2 \leq j \leq p-1$ and that $H_1$ separates $x_0$ from $H_p$. For every $1 \leq j \leq k$, notice that $J_j$ intersects the halfspace delimited by $H_p$ which contains $y_i$ and $z_i$ since it separates these two vertices; on the other hand, $J_j$ cannot be included into the halfspace delimited by $H_{D+1}$ which contains $m$ since the distance between $x_0$ and $J_j$ is at most $D$, so we conclude that $J_j$ must be transverse to $H_{D+1}$ and $H_{D+2}$. Since $H_{D+1}$ and $H_{D+2}$ are $L$-well-separated, and since $\{ J_1, \ldots, J_k \}$ does not contain a facing triple, we deduce that $k \leq L$. 

\medskip \noindent
The distance (possibly infinite) between $y$ and $z$ in the graph $\overline{X}$ being half the cardinality of the symmetric difference between $y$ and $z$, we conclude that $y$ and $z$ are at distance at most $L$ in $\overline{X}$. This concludes the proof of our claim, and finally of our proposition.
\end{proof}

\begin{cor}\label{cor:fixedpointXL}
Let $G$ be a group acting on some CAT(0) cube complex $X$. Fix some integer $L \geq 0$. If the induced action $G \curvearrowright (X,\delta_L)$ fixes a point at infinity, then $G$ stabilises a cube in the Roller boundary $\mathfrak{R}X$. 
\end{cor}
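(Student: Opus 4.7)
The plan is to apply Proposition~\ref{prop:boundarymap} to transfer the fixed point at infinity into a bounded $G$-invariant subset of the Roller boundary, and then invoke the fact, recalled at the beginning of Section~\ref{section:cubecomplexes}, that a group acting on a CAT(0) cube complex with a bounded orbit stabilises a cube.

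More precisely, let $\xi \in \partial(X,\delta_L)$ be a point fixed by $G$, and let $R(\xi) \subset \mathfrak{R}X$ be the image of $\xi$ under the $\mathrm{Isom}(X)$-equivariant map supplied by Proposition~\ref{prop:boundarymap}. By construction $R(\xi)$ is a non-empty set of diameter at most $L$ lying inside a single cubical component $Y$ of $\mathfrak{R}X$. Since $G$ fixes $\xi$ and $R$ is equivariant, $G$ stabilises $R(\xi)$ setwise; and since distinct cubical components are disjoint while every $G$-translate of $R(\xi)$ still lies in a single component, $G$ must preserve $Y$.

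To finish, I would observe that $Y$ is itself a (finite-dimensional, by Lemma~\ref{lem:dimcubicalcomp}) CAT(0) cube complex, and that for any vertex $v \in R(\xi)$ the orbit $G \cdot v$ is contained in $R(\xi)$, hence has diameter at most $L$ in the graph metric of $Y$. The action $G \curvearrowright Y$ therefore has a bounded orbit, so the theorem recalled at the beginning of Section~\ref{section:cubecomplexes} produces a $G$-stabilised cube in $Y \subset \mathfrak{R}X$, which is the cube we are looking for. There is no substantive obstacle: all the work has been done inside Proposition~\ref{prop:boundarymap}, and the corollary is merely a bookkeeping consequence of packaging the fixed point $\xi$ into a bounded invariant set and then cubulating that bounded orbit.
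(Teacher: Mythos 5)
Your proof is correct and follows essentially the same route as the paper: apply Proposition~\ref{prop:boundarymap} to package the fixed point $\xi$ into the bounded $G$-invariant set $R(\xi)$ inside a $G$-stable cubical component $Y$, then invoke the bounded-orbit-implies-fixed-cube theorem recalled at the start of Section~\ref{section:cubecomplexes}. One small inaccuracy: the parenthetical ``finite-dimensional, by Lemma~\ref{lem:dimcubicalcomp}'' is not justified here, since the corollary does not assume $X$ itself is finite-dimensional; fortunately this is irrelevant, as the cited theorem on bounded orbits requires no dimension bound, so the remark can simply be deleted.
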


\begin{proof}
If $G$ fixes a point at infinity in $(X,\delta_L)$, it follows from Proposition \ref{prop:boundarymap} that $G$ stabilises some cubical component $Y$ of $\mathfrak{R}X$ and that the induced action $G \curvearrowright Y$ has a bounded orbit. Consequently, $G$ stabilises a cube in $\mathfrak{R}X$. 
\end{proof}

\begin{remark}
It can be shown that the set $R(\xi)$ we associated to a point $\xi \in \partial(X,\delta_L)$ in the proof of Proposition \ref{prop:boundarymap} is not only a small subset in a cubical component of $\mathfrak{R}X$ but it is a small cubical component: $R(\xi)$ is a cubical component of $\mathfrak{R}X$ of diameter at most $L$. As a consequence, the boundary of $(X,\delta_0)$ coincides with the set of strongly separated ultrafilters in $\mathfrak{R}X$ defined in \cite{PingPong} (and they have the same topology since they are both Cantor sets). However, we do not need this stronger statement, Proposition \ref{prop:boundarymap} will be sufficient for our purpose in Section \ref{section:cubicalrigidity}. 
\end{remark}

\section{Reducible elements in Thompson's group $V$}\label{section:reducible}

\noindent
This section is dedicated to the study of reducible elements (defined below) of Thompson's group $V$. It is the key starting point of our proof of the hyperbolic rigidity of $V$. First of all, let us recall the definition of $V$ as a homeomorphism group of the Cantor set. For more information, we refer to \cite{IntroThompson}.

\begin{definition}\label{def:V}
A \emph{dyadic decomposition} of $[0,1]$ is a collection of intervals $(I_k)$ of the form $\left[ \frac{j}{2^m}, \frac{j+1}{2^m} \right]$ covering $[0,1]$ such that the intersection between any two intervals contains at most one point. Given two dyadic decompositions $\mathfrak{A},\mathfrak{B}$ of $[0,1]$ and a bijection $\sigma : \mathfrak{A} \to \mathfrak{B}$, the map $\mathfrak{C} \to \mathfrak{C}$ defined on the Cantor set $\mathfrak{C} \subset [0,1]$ by sending $A \cap \mathfrak{C}$ to $\sigma(A) \cap \mathfrak{C}$ via an affine map induces a homeomorphism of $\mathfrak{C}$. \emph{Thompson's group} $V$ is the group of the homeomorphisms of $\mathfrak{C}$ which decompose in this way.
\end{definition}

\noindent
Here are the fundamental objects of our paper:

\begin{definition}
An element $g \in V$ is \emph{reducible} if there exists some non-trivial dyadic interval on which $g$ is the identity. Its \emph{thickness} is the maximal diameter of such an interval.
\end{definition}

\noindent
Our first lemma shows that $V$ is boundedly generated by reducible elements with controlled thickness.

\begin{lemma}\label{lem:reduciblegenerate}
Every element of $V$ is a product of four reducible elements of thickness at least $1/8$.
\end{lemma}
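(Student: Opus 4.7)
The plan is to produce an explicit factorization $g = r_1 r_2 r_3 h_L h_R$ with each factor reducible of thickness at least $1/8$. The starting observation is that any $h \in V$ preserving the two halves $L = [0, 1/2]$ and $R = [1/2, 1]$ (meaning $h(L \cap \mathfrak{C}) = L \cap \mathfrak{C}$) splits as $h = h_L h_R$, where $h_L$ coincides with $h$ on $L$ and is the identity on $R$, and $h_R$ is defined symmetrically; both are then reducible of thickness $1/2$. It therefore suffices to exhibit three reducibles $r_1, r_2, r_3$ of thickness $\geq 1/8$ whose product $r := r_1 r_2 r_3$ makes $rg$ preserve $\{L, R\}$; writing $rg = h_L h_R$ then gives $g = r^{-1} h_L h_R$, and the construction will make $r$ an involution so that $r^{-1} = r$.

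Consider the partition $\mathfrak{C} = A \sqcup B \sqcup C \sqcup D$ with $A = L \cap g(L)$, $B = L \cap g(R)$, $C = R \cap g(L)$, and $D = R \cap g(R)$. An involution $r \in V$ that swaps $B$ with $C$ via some piece-bijection $\phi : B \to C$ and is the identity on $A \sqcup D$ satisfies $rg(L) = r(A \sqcup C) = A \sqcup B = L$, so $rg$ preserves $\{L, R\}$. To build such an $r$, I would refine both dyadic decompositions defining $g$ so that (i) each piece lies inside one of the eight intervals $I_0, \ldots, I_7$ of length $1/8$ partitioning $[0,1]$, and (ii) $B$ and $C$ consist of the same number of pieces. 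Both refinements are routine: (i) is obtained by subdividing pieces that cross a boundary of the $8$-equipartition, and (ii) by subdividing pieces of $B$ or of $C$ individually, operations that tune the piece-counts of $B$ and $C$ independently.

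The single involution $r$ may fail to be reducible of thickness $\geq 1/8$, because $B \cup C$ might intersect all eight intervals $I_k$. To circumvent this, split $r$ into three commuting involutions $r_1, r_2, r_3$ with pairwise disjoint supports, each fixing one of $I_0, I_2, I_4$ respectively. Partition the swap-pairs $(b, \phi(b))$ into three groups: Group~1 collects pairs with $b \notin I_0$, Group~2 collects pairs with $b \notin I_2$, and Group~3 collects pairs with $\phi(b) \notin I_4$ (recall that $b \in L$ and $\phi(b) \in R$, so $b \notin I_4$ and $\phi(b) \notin I_0, I_2$ automatically). Every pair lies in at least one group: a pair fails Group~1 only if $b \in I_0$, fails Group~2 only if $b \in I_2$, and fails Group~3 only if $\phi(b) \in I_4$; since $I_0 \cap I_2 = \emptyset$, no pair fails all three. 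Letting $r_i$ be the involution swapping exactly the pairs in Group $i$ (identity elsewhere), each $r_i$ fixes the corresponding avoided $I_k$ and is hence reducible of thickness $\geq 1/8$; and $r_1 r_2 r_3 = r$ since the supports are disjoint.

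Assembling the pieces yields $g = r_1 r_2 r_3 h_L h_R$, a product of five reducibles of thickness $\geq 1/8$. The main delicate point is arranging the two refinements (i) and (ii) simultaneously so that a coherent piece-bijection $\phi$ exists; the three-group case check is elementary.
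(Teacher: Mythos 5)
Your approach is genuinely different from the paper's (which normalises $g$ by conjugating with two explicit ``contracting'' and ``translating'' elements so that the conjugate fixes a small dyadic interval, then unwinds the conjugation). Yours has an attractive structural flavour, but there is a real gap in the construction of the involution $r$.

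You want $r$ to swap $B = L \cap g(R)$ with $C = R \cap g(L)$ and to be the identity on $A \sqcup D$, and you assert that after ``routine'' refinements the clopen sets $B$ and $C$ can be made to consist of the same number of dyadic pieces. This equalisation is only possible when \emph{both} $B$ and $C$ are non-empty. But since elements of $V$ are not measure-preserving, it can perfectly well happen that $g(L)$ and $L$ are strictly nested, in which case exactly one of $B = L \setminus g(L)$ and $C = g(L) \setminus L$ is empty. Concretely, take $g$ sending $[0,\tfrac12]$ affinely onto $[0,\tfrac14]$, $[\tfrac12,\tfrac34]$ onto $[\tfrac14,\tfrac12]$, and $[\tfrac34,1]$ onto $[\tfrac12,1]$: then $g(L)=[0,\tfrac14]\subsetneq L$, so $C = R\cap g(L) = \emptyset$ while $B=[\tfrac14,\tfrac12]\neq\emptyset$. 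No subdivision can turn an empty collection of pieces into a non-empty one, so no piece-bijection $\phi$ exists, and no involution $r$ of the prescribed form (identity on $A\sqcup D$, swapping $B$ and $C$) exists at all. This is not a stray edge case: it is exactly the regime where your ``swap'' picture breaks, and the subsequent three-group decomposition into $r_1,r_2,r_3$ does not get off the ground. Any fix would have to abandon the requirement that $r$ fix $A\sqcup D$ pointwise (thereby losing the swap-pair structure on which the three-group reducibility argument depends), or else replace the fixed partition $\{L,R\}$ by a $g$-dependent partition that is still guaranteed to leave dyadic intervals of length $\geq 1/8$ on each side -- and that new choice would need its own justification. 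As written, the proof does not go through.
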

\begin{figure}
\begin{center}
\includegraphics[scale=0.16]{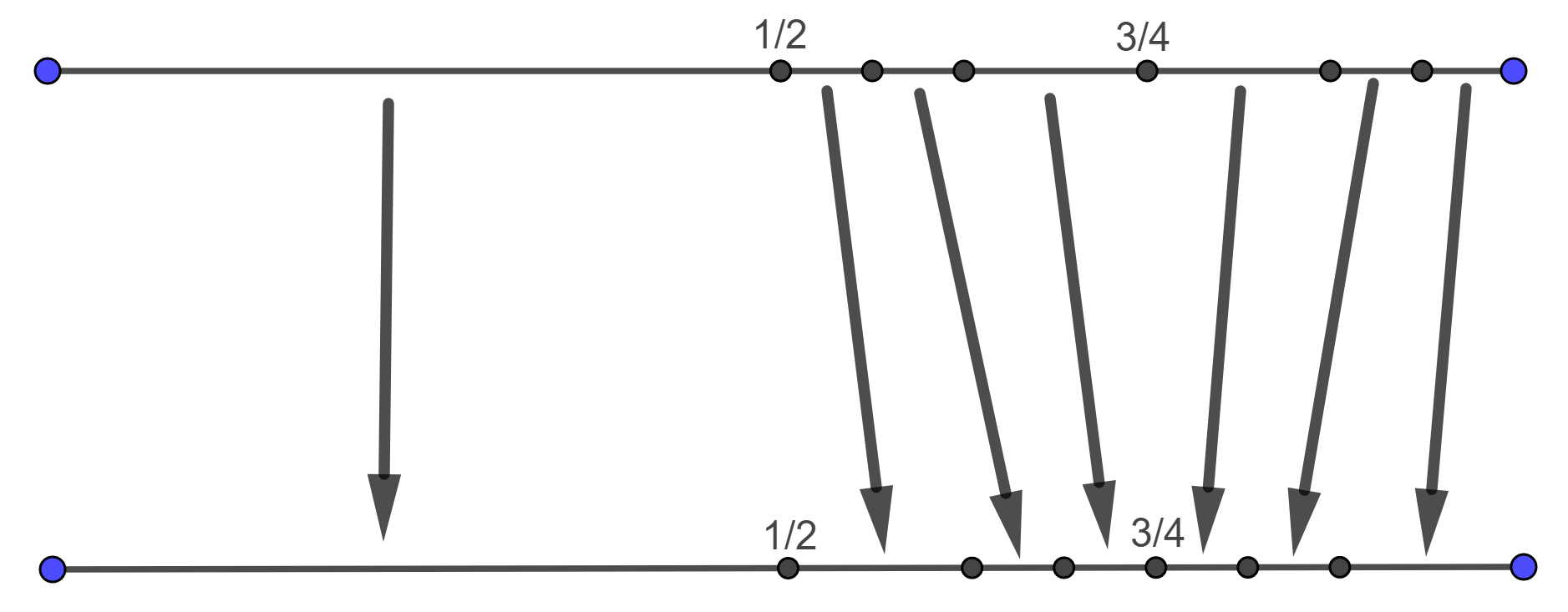}
\caption{The element $a$ in the proof of Lemma \ref{lem:reduciblegenerate}.}
\label{figure1}
\end{center}
\end{figure}
\begin{figure}
\begin{center}
\includegraphics[scale=0.16]{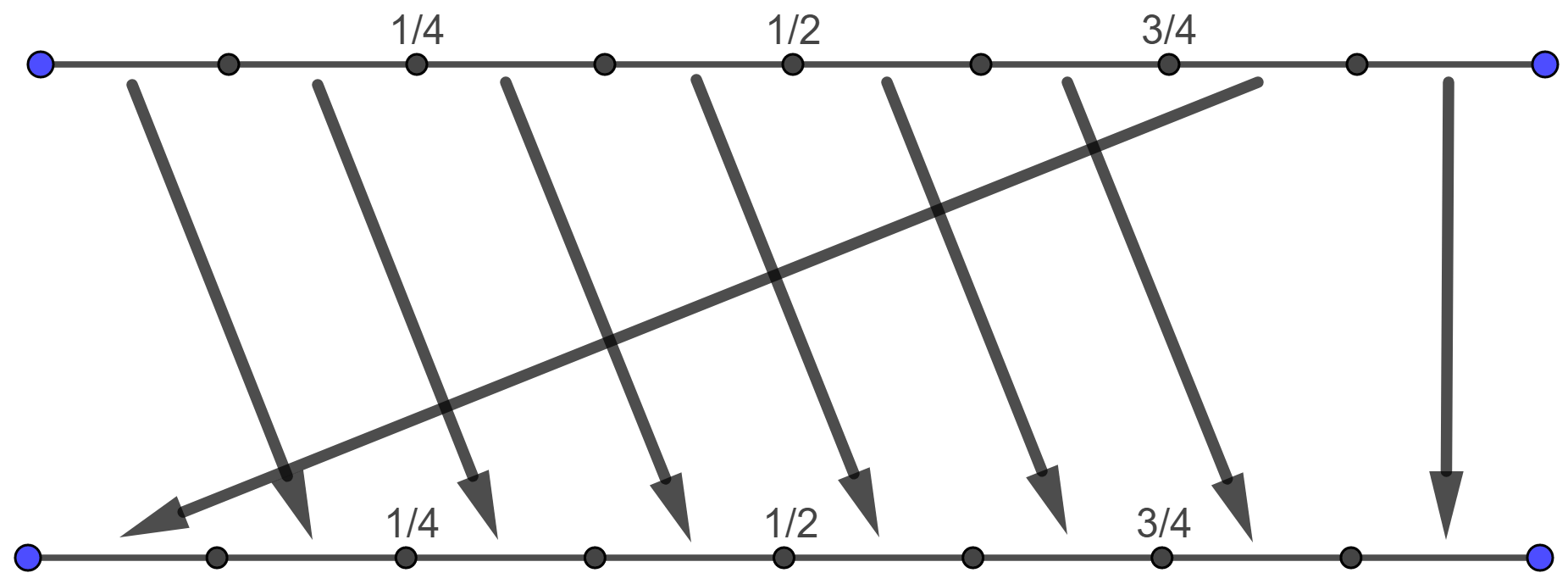}
\caption{The element $b$ in the proof of Lemma \ref{lem:reduciblegenerate}.}
\label{figure2}
\end{center}
\end{figure}
\begin{proof}
Let $a,b \in V$ be the elements defined by Figures \ref{figure1} and \ref{figure2} respectively. These elements satisfy the following properties:
\begin{itemize}
	\item $a$ fixes $3/4$ and, for every dyadic interval $I \subset (1/2,1)$, $\mathrm{length}(a^nI) \underset{n \to + \infty}{\longrightarrow} 0$;
	\item the restriction of $b$ over $[0,3/4]$ is a translation of length $1/8$.
\end{itemize}
Notice also that they are reducible elements of thickness at least $1/8$. Fix some $g \in V$.

\begin{claim}
There exist $n,m \in \mathbb{Z}$ such that $b^mga^n([3/4,7/8])$ is a dyadic interval contained into $[1/2,1]$. 
\end{claim}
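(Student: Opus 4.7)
The plan is to take $p = 0$ and to exploit the contracting behaviour of $a$ at its fixed point $3/4$ in order to make $a^n([3/4, 7/8])$ a tiny dyadic interval accumulating to $3/4$. Once this interval is short enough to lie in a single piece of $g$'s dyadic decomposition, $g$ sends it to a dyadic interval $K$, and a suitable power of $b$ translates $K$ into $[1/2, 1]$.

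More precisely, using the hypothesis $\mathrm{length}(a^n I) \to 0$ for dyadic $I \subset (1/2, 1)$, the sets $J_n := a^n([3/4, 7/8])$ are dyadic intervals with $3/4$ as left endpoint and with lengths tending to $0$. Since $g \in V$ is piecewise affine on a fixed dyadic decomposition of $[0, 1]$, there is some $\epsilon > 0$ such that $[3/4, 3/4 + \epsilon]$ is contained in a single affine piece of $g$. For $n$ large enough we have $J_n \subset [3/4, 3/4 + \epsilon]$, so $K := g(J_n)$ is a dyadic interval; and by further enlarging $n$ we may also assume $|K| \leq 1/8$, i.e.\ $K = [j/2^k, (j+1)/2^k]$ with $k \geq 3$.

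A dyadic interval of length at most $1/8$ cannot straddle $1/2$, so either $K \subset [1/2, 1]$ (and we are done with $m = p = 0$) or $K \subset [0, 1/2]$. In the latter case, $b$ acts on $[0, 3/4]$ as translation by $1/8$, so an easy induction yields $b^m(K) = K + m/8$ provided each iterate $K + i/8$, $0 \leq i \leq m$, remains in $[0, 3/4]$. The set of real numbers $m$ with $K + m/8 \subset [1/2, 3/4]$ is an interval of length $2 - 8 \cdot 2^{-k} \geq 1$, hence contains at least one integer; for that integer $m$, the intermediate constraint holds automatically, and $a^p b^m g a^n([3/4, 7/8]) = K + m/8$ is a dyadic interval contained in $[1/2, 3/4] \subset [1/2, 1]$ (with $p = 0$).

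The main obstacle is the dyadic-alignment issue in the final translation step: $K + m/8$ is a dyadic interval only when $1/8$ is an integer multiple of $|K|$, which is why we must enlarge $n$ not merely to ensure affine behaviour of $g$ on $J_n$ but also to force $|K| \leq 1/8$. Once this smallness is secured, the existence of $m$ reduces to the routine observation that a real interval of length at least $1$ contains an integer.
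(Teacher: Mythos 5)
Your proof is correct and reaches the claim by a genuinely different, and in some ways cleaner, route than the paper.

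The paper fixes a dyadic interval $A$ with left endpoint $3/4$ on which $g$ is affine, sets $B=g(A)$, and picks $n$ only large enough that $a^n([3/4,7/8])\subset A$. It then works with the whole interval $B$ (which may have length up to $1/2$): it splits into cases according to whether $B$ is short, uses $b^m$ to translate $B$ so that it contains $3/4$, and finally uses $a^p$ to shrink $a^p b^m(B)$ to a dyadic interval of length at most $1/4$ containing $3/4$, which is then forced to lie in $[1/2,1]$. You instead enlarge $n$ further so that $K:=g(a^n([3/4,7/8]))$ itself already has length at most $1/8$, set $p=0$, and use a one-line pigeonhole: the set of reals $m$ with $K+m/8\subset[1/2,3/4]$ is a closed interval of length $2-8\cdot 2^{-k}\geq 1$, hence contains a non-negative integer, and for that $m$ all intermediate translates $K+i/8$ ($0\leq i<m$) stay inside $[0,3/4]$ where $b$ really is the translation. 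The smallness $|K|\leq 1/8$ plays a double role: it kills the case distinction (any proper dyadic interval avoids $1/2$, and the remaining case is handled by one pigeonhole) and it guarantees the dyadic alignment of $K+m/8$, since shifting a dyadic interval of denominator $2^k$ with $k\geq 3$ by $m/2^3$ keeps it dyadic. This sidesteps the delicate alignment questions that arise in the paper when $B$ has length $1/2$ and $b^m$ is applied to it. One small point you assert without comment: that $K=g(J_n)$ is a dyadic interval once $J_n$ sits in a single dyadic piece of $g$'s decomposition. This is indeed automatic (a dyadic affine map with power-of-two slope between dyadic intervals sends dyadic subintervals to dyadic subintervals), but it is worth saying a word about, since it is exactly what makes the rest of the argument type-check.
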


\noindent
Without loss of generality, we may suppose that there exists a dyadic interval $A\subset (0,1)$ with $3/4$ as its left endpoint which is sent by $g$ to a dyadic interval $B$. Let $n \geq 0$ be sufficiently large so that $a^n([3/4,7/8]) \subset A$. As a consequence, $ga^n([3/4,7/8])$ is a dyadic interval. Moreover, we can also choose $n$ sufficiently large so that $ga^n([3/4,7/8])$ has length at most $1/8$. If $ga^n([3/4,7/8])$ is included into $[1/2,1]$, we are done. Otherwise, $ga^n([3/4,7/8])$ is included into $[0,5/8]$, we can translate it by a power of $b$, say $b^m$, into $[1/2,1]$. Thus, we have found $n,m \geq 0$ such that $b^mga^n([3/4,7/8]) \subset [1/2,1]$, proving our claim. 

\medskip \noindent
Now, let $c \in V$ be any element which sends the dyadic interval $b^mga^n([3/4,7/8])$ to $[3/4,7/8]$ and which is the identity over $[0,1/4]$. Then $c$ is a reduced element of thickness at least $1/4$, and by construction $cb^mga^n$ fixes $[3/4,7/8]$ so that it must be a reduced element of thickness at least $1/8$. Our lemma follows from the equality
$$g = b^{-m} \cdot c^{-1} \cdot cb^mga^n \cdot a^{-n}$$
and from the observation that any power of a reducible element of thickness at least $1/8$ is again a reduced element of thickness at least $1/8$.
\end{proof}

\noindent
Our second lemma essentially shows that any reducible element generates a direct product with at least one of its conjugates.

\begin{lemma}\label{lem:conjdisjointsupp}
Let $g \in V$ be a reducible element and $I \subset \mathrm{Fix}(g)$ a dyadic interval. There exists some $h \in V$ such that $\mathrm{supp}(hgh^{-1}) \subset I$.
\end{lemma}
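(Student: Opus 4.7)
The plan is to construct explicitly an element $h \in V$ satisfying $h([0,1] \setminus I) \subset I$. Since conjugation in any homeomorphism group satisfies $\mathrm{supp}(hgh^{-1}) = h(\mathrm{supp}(g))$, and since $g$ being trivial on $I$ forces $\mathrm{supp}(g) \subset [0,1] \setminus I$, the existence of such an $h$ will immediately yield the lemma.

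To build $h$, I would first write the complement $[0,1] \setminus I$ as the (finite) union of its maximal dyadic subintervals $J_1, \ldots, J_r$; this is possible because $I$ itself is dyadic. I would then choose an integer $k$ large enough that $2^k \geq r$ and subdivide $I$ into $2^k$ dyadic subintervals $I_1, \ldots, I_{2^k}$ of equal length. The set $\mathfrak{A} = \{J_1, \ldots, J_r, I_1, \ldots, I_{2^k}\}$ is then a dyadic decomposition of $[0,1]$.

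Following Definition \ref{def:V}, I would describe $h$ via the pair of decompositions $(\mathfrak{A}, \mathfrak{A})$ together with the bijection $\sigma : \mathfrak{A} \to \mathfrak{A}$ that sends each outside piece into $I$: namely $\sigma(J_i) = I_i$ for $1 \leq i \leq r$, $\sigma(I_i) = I_{r+i}$ for $1 \leq i \leq 2^k - r$, and $\sigma(I_{2^k - r + i}) = J_i$ for $1 \leq i \leq r$, so that the excess subpieces of $I$ cycle back onto the outside pieces to compensate. A direct verification shows $\sigma$ is a bijection, hence $h$ is a well-defined element of $V$, and by construction $h(J_i) = I_i \subset I$ for every $i$, so $h([0,1] \setminus I) \subset I$.

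I do not expect a genuine obstacle here: the argument is essentially combinatorial bookkeeping. The only point requiring care is matching the cardinalities of the source and target dyadic decompositions, which is precisely why I take the same $\mathfrak{A}$ on both sides and use the cyclic permutation above to absorb the surplus of $2^k - r$ pieces inside $I$.
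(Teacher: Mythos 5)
Your proof is correct, and it takes a genuinely different route from the paper's. The paper constructs $h$ dynamically: it first finds a permutation $a \in V$ sending $I$ to a dyadic interval $J$ containing $1$, then iterates a fixed element $d$ (defined by a figure) that expands any dyadic neighborhood of $1$, so that $d^n a$ shrinks $\mathrm{supp}(g)$ to diameter at most $\mathrm{length}(I)/2$, and finally composes with another permutation $b$ to push the result into $I$, giving $h = bd^n a$. Your argument instead builds $h$ in one shot, purely combinatorially: decompose $[0,1]\setminus I$ into its finitely many maximal dyadic pieces $J_1,\ldots,J_r$, subdivide $I$ into $2^k \geq r$ equal dyadic pieces, and define $h$ by a cyclic permutation $\sigma$ of the resulting common decomposition $\mathfrak{A}$ that sends every $J_i$ into $I$; then $h(\mathrm{supp}(g)) \subset h([0,1]\setminus I) \subset I$ since $\mathrm{supp}(hgh^{-1}) = h(\mathrm{supp}(g))$. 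Your version avoids any appeal to the asymptotic behaviour of a specific dynamical element and makes the cardinality bookkeeping explicit (matching $|\mathfrak{A}|$ on both sides and absorbing the surplus $2^k - r$ inside pieces by cycling them onto the $J_i$), so it is arguably more self-contained; the paper's version has the advantage of reusing the same style of argument (pick an element with good expansion dynamics, iterate, conjugate) that appears in Lemmas \ref{lem:reduciblegenerate} and \ref{lem:bigF}, giving a more uniform toolkit across the section. Either proof is complete.
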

\begin{figure}
\begin{center}
\includegraphics[scale=0.16]{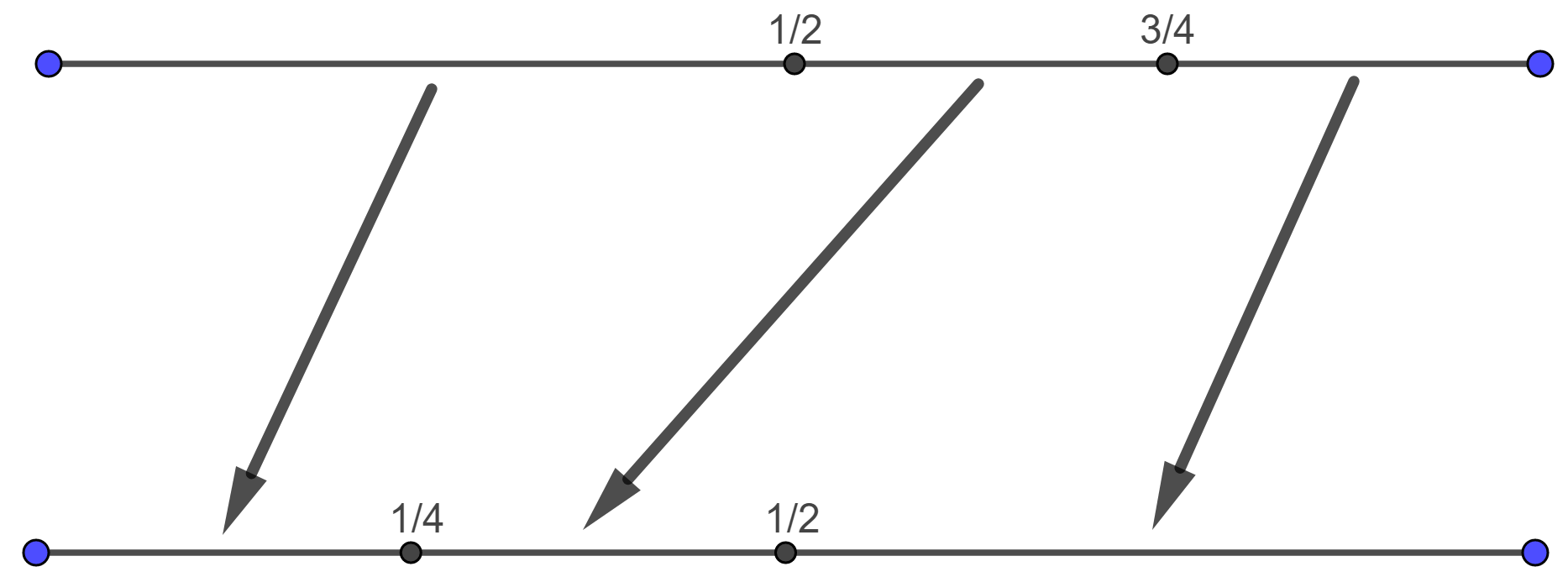}
\caption{The element $d$ in the proof of Lemma \ref{lem:conjdisjointsupp}.}
\label{figure3}
\end{center}
\end{figure}
\begin{proof}
Let $a \in V$ be a permutation sending the dyadic interval $I$ to a dyadic interval $J$ containing $1$, and let $d \in V$ be the element defined by Figure \ref{figure3}. Notice that $d$ satisfies the following property: for every dyadic interval $K$ containing $1$, $\mathrm{length}(d^nK) \underset{n \to + \infty}{\longrightarrow} 1$. Therefore, there exists some $n \geq 0$ such that $d^nJ$ has length at least $1- \mathrm{length}(I)/2$. As a consequence, the image of $\mathrm{supp}(g)$ by $d^na$ has diameter at most $\mathrm{length}(I)/2$. It follows that there exists a permutation $b \in V$ such that $bd^na$ sends $\mathrm{supp}(g)$ inside $I$. Thus, if we set $h=bd^na$ then
$$\mathrm{supp}(hgh^{-1})= h \cdot \mathrm{supp}(g) \subset I,$$
which concludes the proof of our lemma.
\end{proof}

\noindent
Our third lemma shows that two arbitrary elements of $V$ can be made reducible simultaneously in many different ways.

\begin{lemma}\label{lem:simult}
For every $g_1,g_2 \in V$, there exist a reducible $h \in V$ and a dyadic interval $I \subset (0,1)$, such that $fh$, $fhg_1$ and $fhg_2$ are all reducible for every $f \in V$ satisfying $\mathrm{supp}(f) \subset I$. 
\end{lemma}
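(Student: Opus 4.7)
The plan is to construct $h$ so that its behavior on three carefully chosen dyadic intervals forces each of $fh$, $fhg_1$, $fhg_2$ to fix a non-trivial dyadic interval, regardless of the choice of $f$. Concretely, I would choose dyadic intervals $A_0, A_1, A_2 \subset (0,1)$ intended to witness the reducibility of $fh$, $fhg_1$ and $fhg_2$ respectively. Setting $B_i := g_i(A_i)$, I define $h \in V$ by prescribing
\[
h|_{A_0} = \mathrm{id}, \quad h|_{B_1} = g_1^{-1}|_{B_1}, \quad h|_{B_2} = g_2^{-1}|_{B_2},
\]
and extending arbitrarily to a dyadic-preserving bijection of $[0,1]$. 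Since $h$ fixes the whole non-trivial dyadic interval $A_0$, $h$ itself will automatically be reducible.

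To make this set-up work, I first invoke the fact that each $g_i \in V$ is piecewise affine (with dyadic slopes) on some dyadic decomposition of $[0,1]$, so one can find arbitrarily small dyadic intervals $A_i$ on which $g_i$ restricts to a single affine dyadic piece; then $B_i = g_i(A_i)$ is again a dyadic interval and $g_i^{-1}|_{B_i}$ is a single affine piece, so the prescription of $h|_{B_i}$ makes sense inside $V$. By passing to dyadic sub-intervals and refining the surrounding dyadic decomposition, I arrange that $A_0, B_1, B_2$ are pairwise disjoint (so $h$ is well-defined on its prescribed domain), that $A_0, A_1, A_2$ are pairwise disjoint (so $h$ is bijective onto its image there), and that $I \subset (0,1)$ is a further dyadic interval disjoint from $A_0 \cup A_1 \cup A_2$. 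There is ample room to do all of this simultaneously, because the $A_i$'s (and therefore the $B_i$'s) can be shrunk as much as we like while retaining all the properties above. The extension of $h$ to the complement of $A_0 \cup B_1 \cup B_2$ is obtained by taking a common dyadic refinement of the complementary region and of the complement of $A_0 \cup A_1 \cup A_2$, then matching their pieces arbitrarily.

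Finally, for any $f \in V$ with $\mathrm{supp}(f) \subset I$: on $A_0 \subset [0,1] \setminus I$ one has $fh = h = \mathrm{id}$, so $A_0$ witnesses that $fh$ is reducible; for $x \in A_1 \subset [0,1] \setminus I$, one computes
\[
fhg_1(x) = f\bigl( g_1^{-1}(g_1(x)) \bigr) = f(x) = x,
\]
so $A_1$ witnesses the reducibility of $fhg_1$, and symmetrically $A_2$ witnesses the reducibility of $fhg_2$. The only real obstacle is the combinatorial bookkeeping needed to fit all of $A_0, A_1, A_2, B_1, B_2, I$ compatibly inside $(0,1)$, but this is resolved once we observe that shrinking the $A_i$'s sufficiently leaves plenty of dyadic real estate for the remaining intervals.
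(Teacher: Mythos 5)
Your proposal is correct and follows essentially the same route as the paper's proof. Your intervals $A_1, A_2$ play the role of the paper's $A, B$, your $B_i = g_i(A_i)$ correspond to $g_1(A), g_2(B)$, and your $A_0$ serves the same dual purpose as the paper's $J$ (making $h$ reducible and witnessing the reducibility of $fh$); in fact your explicit prescription $h|_{B_i} = g_i^{-1}|_{B_i}$ makes precise what the paper expresses more tersely as ``$h$ sends $g_1(A)$ to $A$.''
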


\begin{proof}
Let $g_1,g_2 \in V$ be two elements.

\begin{claim}
There exist two disjoint dyadic intervals $A,B$ such that $g_1(A)$ and $g_2(B)$ are also two disjoint dyadic intervals.
\end{claim}

\noindent
Fix two disjoint dyadic interval $A_0,B_0$ such that $A_0$ contains $0$ and $B_0$ contains $1$, and such that $g_1(A_0)$ and $g_2(B_0)$ are dyadic intervals. If one the endpoints of $g_1(A_0)$ does not belong to $g_2(B_0)$, then there exists a dyadic subinterval $A \subset A_0$ such that $g_1(A)$ and $g_2(B_0)$ are disjoint, and we are done. Similarly, if one of the endpoints of $g_2(B_0)$ does not belong to $g_1(A_0)$, then there exists a dyadic subinterval $B \subset B_0$ such that $g_1(A_0)$ and $g_2(B)$ are disjoint, and we are done. The only remaining case to consider is $g_1(A_0)=g_2(B_0)$. Here, set $A$ as the first fourth of $A_0$ and $B$ as the last fourth of $B_0$. Then $g_1(A)$ and $g_2(B)$ are disjoint. This concludes the proof of our claim.

\medskip \noindent
Now, fix two dyadic intervals $A,B$ given by our claim. Notice that, up to replacing $A$ with one of its halves, we may suppose without loss of generality that $A \cup B \cup g_1(A) \cup g_2(B)$ does not cover all the dyadic intervals of $[0,1]$. Fix a dyadic interval disjoint from $A \cup B \cup g_1(A) \cup g_2(B)$, and let $I,J$ denote its two halves. Because $A$, $B$, $g_1(A)$, $g_2(B)$, $I$ and $J$ are pairwise disjoint, it follows that there exists some $h \in V$ sending $g_1(A)$ to $A$, $g_2(B)$ to $B$ and fixing $I \cup J$. Notice that $h$ is a reducible element. Now, let $f \in V$ be an arbitrary element satisfying $\mathrm{supp}(f) \subset I$. Notice that
$$fhg_1(A)=f(A)=A, \ fhg_2(B)=f(B)=B \ \text{and} \ fh(J)=f(J)=J,$$
so $fh$, $fhg_1$ and $fhg_2$ are all reducible. This concludes the proof of our lemma. 
\end{proof}

\noindent
Finally, our fourth and last lemma shows how to conjugate a reducible element, in a controlled way, in order to include its support into a given dyadic interval.

\begin{lemma}\label{lem:bigF}
For every dyadic interval $I \subset (0,1)$ and every $\epsilon>0$, there exist reducible $h_1,\ldots, h_8 \in V$ such that $F := \langle h_1 \rangle \cdots \langle h_8 \rangle$ satisfies the following. For every reducible element $g \in V$ of thickness at least $\epsilon$, there exists some $f \in F$ such that $fgf^{-1}$ fixes $I^c$. 
\end{lemma}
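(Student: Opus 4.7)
The plan is to reduce the task to a finite list of conjugacy targets and then to encode that list inside a product of eight cyclic subgroups. Fix the dyadic interval $I \subset (0,1)$ and $\epsilon > 0$. Since every dyadic interval has length $2^{-k}$ for some $k \geq 0$, the collection $\mathcal{J}$ of dyadic subintervals of $[0,1]$ of length at least $\epsilon$ is finite; enumerate it as $J_1, \ldots, J_N$. Given a reducible $g$ of thickness at least $\epsilon$, some $J_i \in \mathcal{J}$ is contained in $\mathrm{Fix}(g)$, so $\mathrm{supp}(g) \subseteq J_i^c$; since $\mathrm{supp}(fgf^{-1}) = f(\mathrm{supp}(g))$, it will be enough to produce, for each $i$, a single element $f_i \in F$ with $f_i(J_i^c) \subseteq I$. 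Such an $f_i$ then serves as the desired $f$ uniformly over all reducible $g$ whose fixed set contains $J_i$.

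For each fixed $i$, the existence of some $f_i \in V$ with $f_i(J_i^c) \subseteq I$ is immediate, since $\mathfrak{C}$ splits as the clopen pieces $J_i$ and $J_i^c$, both of which admit arbitrary dyadic refinements, so an element of $V$ (defined by two dyadic decompositions and a bijection between them, as in Definition \ref{def:V}) can be constructed to map a dyadic decomposition of $J_i^c$ into one of $I$. The real difficulty is that $N$ grows like $1/\epsilon$ while we are only allowed eight cyclic factors in $F$; we must therefore choose reducible $h_1, \ldots, h_8 \in V$ whose combined dynamics on dyadic intervals are rich enough that every $f_i$ can be written in the form $h_1^{n_{i,1}} \cdots h_8^{n_{i,8}}$. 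My plan is to build $h_1, \ldots, h_8$ out of two families of reducible elements, modelled on $a$ of Lemma \ref{lem:reduciblegenerate} and $d$ of Lemma \ref{lem:conjdisjointsupp}: \emph{expanding} reducibles, whose iterates stretch a standard dyadic interval around a chosen attracting fixed point into a dyadic set covering $I^c$; and \emph{permutation} reducibles, whose iterates sweep through the dyadic intervals at a fixed scale, so that a single element of suitable step sends any $J_i$ to any other of the same length.

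Given $J_i$, the element $f_i$ will be assembled as a short product of these pieces in a fixed shape: first a permutation factor brings $J_i$ to a standard dyadic position compatible with the expanding dynamics, then one or two expanding factors blow it up to a dyadic set whose image covers $I^c$, and final permutation factors line up the image of $J_i^c$ inside $I$. With expansions based at two or three attracting fixed points and translations of a few step sizes, eight cyclic factors will be enough to encode this recipe simultaneously for every $J_i \in \mathcal{J}$. The main obstacle is precisely this uniformity: because $N$ depends on $\epsilon$ but the number of cyclic factors does not, the dynamics of $h_1, \ldots, h_8$ have to be chosen so that a single product formula, with exponents varying with $i$, works for every $J_i$; this is what forces the careful selection of fixed points and step sizes for the $h_k$. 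Once the eight generators are in hand, setting $f = f_i$ concludes the proof.
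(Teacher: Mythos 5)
Your reduction is sound: if $g$ has thickness at least $\epsilon$, some dyadic interval $J$ of length $\geq\epsilon$ lies in $\mathrm{Fix}(g)$, so $\mathrm{supp}(g)\subseteq J^c$ and it suffices to find $f\in F$ with $f(J^c)\subseteq I$. Your plan to build $F$ from two kinds of reducible elements---expanders (like the $a$ of Lemma~\ref{lem:reduciblegenerate} or the $d$ of Lemma~\ref{lem:conjdisjointsupp}, iterates of which blow a small interval up to nearly all of $[0,1]$) and sweepers (translations that move a fixed-scale interval through the grid)---is exactly in the spirit of what the paper does.

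However, the proof stops precisely at the decisive step. You never actually exhibit $h_1,\dots,h_8$, and you never verify that the required $f_i$ can be written in the single prescribed form $h_1^{n_1}\cdots h_8^{n_8}$. You candidly flag this yourself (``this is what forces the careful selection of fixed points and step sizes for the $h_k$''), but that selection, together with the check that the exponents can be chosen to make the conjugate land in $I$, \emph{is} the content of the lemma. In particular, the enumeration of the finitely many intervals $J_1,\dots,J_N$ does not really help, because the obstruction is not the number of targets but the rigidity of the word shape: the factors appear in a fixed order, each used at most once, and one cannot simply interleave a ``bring $J_i$ to standard position, blow up, line up'' recipe of arbitrary length. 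The paper resolves this by splitting into exactly three cases according to whether $J$ contains $0$, contains $1$, or neither, and by allotting disjoint blocks of the product to each case: $\langle T_l\rangle\langle P_l\rangle$ for the first, $\langle T_r\rangle\langle P_r\rangle$ for the second, and $\langle T\rangle\langle Q\rangle\langle P\rangle\langle T\rangle$ for the third (with the unused factors trivial), giving exactly eight cyclic factors. This case-splitting, the construction of $P_l,P_r,T_l,T_r,P,T,Q$ with specified dynamics, and the exponent choices in each case are the parts of the argument still missing from your write-up; as it stands, the proposal is a correct strategy but not yet a proof.
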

\begin{figure}
\begin{center}
\includegraphics[scale=0.2]{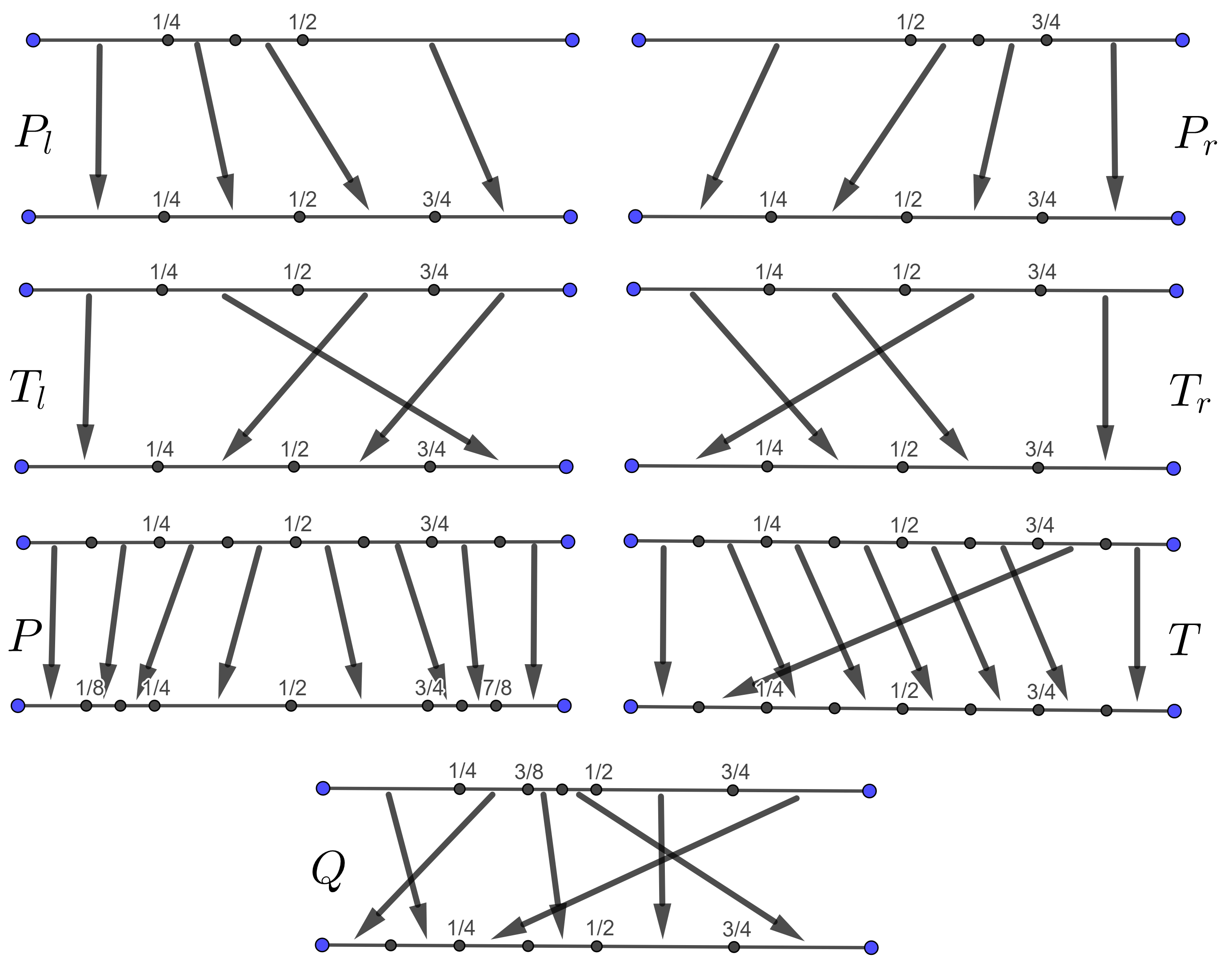}
\caption{Examples of elements $P_l, P_r, T_l,T_r, P,T,Q \in V$ from the proof of Lemma \ref{lem:bigF}.}
\label{GrosseFigure}
\end{center}
\end{figure}
\begin{proof}
Fix a dyadic interval $I \subset (0,1)$, and real number $\epsilon>0$. Without loss of generality, we will suppose that $\epsilon$ is a negative power of two which is sufficiently small so that $[0,\epsilon]$ and $[1-\epsilon,\epsilon]$ are included into $I^c$ and so that $\epsilon < \mathrm{length}(I)/2$. Fix some elements $P_l, P_r, T_l,T_r, P,T,Q \in V$ satisfying the following properties (such elements are illustrated by Figure \ref{GrosseFigure} for $\epsilon=1/2$):
\begin{itemize}
	\item $P_l$ fixes $[0,\epsilon/2]$, and $\mathrm{length}(P_l^nK) \underset{n \to + \infty}{\longrightarrow} 1$ for every dyadic interval $K$ containing $0$ and $\epsilon$;
	\item $P_r$ fixes $[1,1-\epsilon/2]$, and $\mathrm{length}(P_l^nK) \underset{n \to + \infty}{\longrightarrow} 1$ for every dyadic interval $K$ containing $1$ and $1-\epsilon$;
	\item $T_l$ fixes $[0,\epsilon/2]$ and is a translation of length $\epsilon/2$ to the left on $[\epsilon,1]$;
	\item $T_r$ fixes $[1,1-\epsilon/2]$ and is a translation of length $\epsilon/2$ to the right on $[0,1-\epsilon]$;
	\item $P$ fixes $[0,\epsilon/4] \cup [1-\epsilon/4,1]$, and $\mathrm{length}(P^nK) \underset{n \to + \infty}{\longrightarrow} 1-\epsilon/2$ for every dyadic interval $K$ containing $1/2$ in its interior;
	\item $T$ fixes $[0,\epsilon/4] \cup [1-\epsilon/4,1]$ and is a translation of length $\epsilon/4$ to the right on $[\epsilon/4,1-\epsilon/2]$;
	\item $Q$ sends $[0,\epsilon/2] \cup [1-\epsilon/2,1]$ into $[\epsilon/4,1/2-\epsilon/4]$ and is the identity on $[1/2,1-\epsilon/2]$.
\end{itemize}
Let $g \in V$ be a reduced element of thickness at least $\epsilon$, and let $J \subset [0,1]$ be a dyadic interval of length at least $\epsilon$ on which $g$ is the identity. Setting 
$$F = \langle T_l \rangle \cdot \langle P_l \rangle \cdot \langle T_r \rangle \cdot \langle P_r \rangle \cdot \langle T \rangle \cdot \langle Q \rangle \cdot \langle P \rangle \cdot \langle T \rangle,$$ 
our goal is to show that $fgf^{-1} \in \mathrm{Fix}(I^c)$ for some $f \in F$. If $g=1$, there is nothing to prove, so from now on we suppose that $g\neq 1$.

\medskip \noindent
\textbf{Case 1:} $0$ belongs to $J$. Notice that $\epsilon$ belongs to $J$ since $J$ has length at least $\epsilon$. Consequently, there exists some $n \geq 0$ such that $P_l^nJ$ has length at least $1-\mathrm{length}(I)/2$, so that $\mathrm{supp}(P_l^ngP_l^{-n})=P_l^n \mathrm{supp}(g)$ is included into the dyadic segment $K$ which contains $1$ and which has length at most $\mathrm{length}(I)/2$. Now, let $m \geq 0$ be such that $T_l^m(K) \subset I$. So
$$\mathrm{supp}(T_l^mP_l^n g P_l^{-n}T_l^{-m}) = T_l^mP_l^n \mathrm{supp}(g) \subset T_l^m(K) \subset I,$$
which shows that $T_l^mP_l^n g P_l^{-n}T_l^{-m}$ belongs to $\mathrm{Fix}(I^c)$. 

\medskip \noindent
\textbf{Case 2:} $1$ belongs to $J$. The situation is symmetric to the previous one: just replace $P_l$ and $T_l$ with $P_r$ and $T_r$ respectively. 

\medskip \noindent
\textbf{Case 3:} $J$ does not contain $0$ and $1$. Up to extracting a dyadic subinterval from $J$, we may suppose that $J$ is disjoint from $[0,\epsilon/4] \cup [1-\epsilon/4,1]$ and has length at least $\epsilon/2$. There exists some $n \in \mathbb{Z}$ such that $T^n(J)$ contains $1/2$ in its interior. Next, there exists some $m \geq 0$ such that $P^mT^n(J)$ has length at least $1-2\epsilon/3$. Consequently, the support of $P^mT^ngT^{-n}P^{-m}$ is included into $[0,\epsilon/3] \cup [1-\epsilon/3,1]$, and a fortiori into $[0,\epsilon/2] \cup [1-\epsilon/2,1]$. Because $Q$ sends $[0,\epsilon/2] \cup [1-\epsilon/2,1]$ into an interval $K$ of length at most $2\epsilon/3$ inside $[\epsilon/4,1/2-\epsilon/4]$, there exists some $p \in \mathbb{Z}$ such that $T^p(K) \subset I$. One has
$$\begin{array}{lcl} \mathrm{supp}(T^pQ P^mT^n g T^{-n}P^{-m}Q^{-1}T^{-p}) & = & T^pQ \cdot \mathrm{supp}( P^mT^n g T^{-n}P^{-m}) \\ \\ & \subset & \subset T^pQ \left( [0,\epsilon/3] \cup [1-\epsilon/3,1] \right) \\ \\ & \subset & T^p \left(K \right) \subset I \end{array}$$
hence $T^ pQP^mT^n g T^{-n}P^{-m}Q^{-1}T^{-p} \in \mathrm{Fix}(I^c)$.
\end{proof}

\section{Hyperbolic rigidity}\label{section:hyprigidity}

\noindent
We begin this section by proving the main criterion of our article, namely:

\begin{thm}\label{thm:maincriterion}
Let $G$ be a group. Suppose that there exist two subsets $A \subset B \subset G$ satisfying the following conditions.
\begin{itemize}
	\item $G$ is \emph{boundedly generated} by $A$, ie., there exists some $N \geq 0$ such that every element of $G$ is the product of at most $N$ elements of $A$.
	\item For every $a,b \in B$, there exist $g,h \in G$ such that $$[gag^{-1},a]=[gag^{-1},hgag^{-1}h^{-1}]=[hgag^{-1}h^{-1},b]=1.$$
	\item For every $a,b \in G$, there exist some $h, h_1, \ldots, h_r \in B$ such that the following holds. For every $k \in A$, there exists some $f \in \langle h_1 \rangle \cdots \langle h_r \rangle$ such that the elements $fkf^{-1}h$, $fkf^{-1}ha$ and $fkf^{-1}hb$ all belong to $B$.
\end{itemize}
Then any isometric action of $G$ on a hyperbolic space fixes a point at infinity, or stabilises a pair of points at infinity, or has bounded orbits. 
\end{thm}


\noindent
From now on, we fix a group $G$ and two subsets $A \subset B \subset G$ satisfying the above conditions. We recall from Section \ref{section:hyp} that we may suppose without loss of generality that our hyperbolic spaces are graphs. Our statement will be an easy consequence of the following two lemmas.

\begin{lemma}\label{lem:reducibleelliptic}
Let $G$ act on some hyperbolic graph. If $G$ does not fix a point at infinity nor stabilises a pair of points at infinity, then all the elements of $B$ are elliptic.
\end{lemma}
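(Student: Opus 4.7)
The strategy is to argue by contradiction. Suppose that some $b_0 \in B$ is not elliptic; then $b_0$ is either loxodromic, with a pair $\{\xi_+,\xi_-\}$ of fixed points at infinity, or parabolic, with a unique fixed point $\xi$ at infinity. In each case I will deduce from property (2), together with the bounded generation property (1), that either $G$ fixes a point at infinity or $G$ stabilises a pair of points at infinity, contradicting the hypothesis. Only properties (1) and (2) should be needed here; property (3) appears tailored for subsequent lemmas.

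Consider first the loxodromic case. For an arbitrary $b' \in B$, applying property (2) to the pair $(b_0, b')$ yields some $g \in G$ such that $gb_0g^{-1}$ commutes with both $b_0$ and $b'$. Since $gb_0g^{-1}$ is itself loxodromic, and since two commuting loxodromic isometries of a hyperbolic space share their pair of fixed points at infinity, $gb_0g^{-1}$ must have fixed points $\{\xi_+,\xi_-\}$. Then $b'$ commutes with this loxodromic isometry, hence preserves the pair $\{\xi_+,\xi_-\}$. Since $b'$ was arbitrary in $B$ and, by property (1), $G$ is generated by $A \subset B$, the whole group $G$ stabilises $\{\xi_+,\xi_-\}$, a contradiction.

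The parabolic case is handled analogously. For any $b' \in B$, property (2) provides $g \in G$ with $gb_0g^{-1}$ commuting with both $b_0$ and $b'$. The conjugate $gb_0g^{-1}$ is parabolic with unique fixed point $g\xi$, and two commuting parabolic isometries of a hyperbolic space share their unique fixed point at infinity; therefore $g\xi = \xi$. Consequently $b'$ commutes with a parabolic isometry whose only fixed point at infinity is $\xi$, which forces $b'$ to fix $\xi$. By bounded generation $G$ also fixes $\xi$, contradicting the hypothesis.

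The only real point that needs care is the standard pair of facts from hyperbolic geometry invoked above: commuting loxodromic isometries share their two fixed points at infinity, and commuting parabolic isometries share their unique fixed point. Both follow from the observation that if $g_1$ and $g_2$ commute, then $g_2$ preserves the fixed-point set of $g_1$ at infinity (applying $g_1$ to $g_2 \xi$ and using that $g_2$ centralises $g_1$), combined with the fact that a loxodromic isometry has exactly two and a parabolic isometry exactly one such fixed point; the possibility that $g_1$ swaps rather than fixes the two endpoints of the axis of $g_2$ is harmless, being resolved by passing to $g_1^2$.
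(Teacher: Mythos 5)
Your proof is correct and follows essentially the same route as the paper's: apply property (2) to get a conjugate of the non-elliptic $b_0$ commuting with both $b_0$ and any $b'\in B$, use that commuting isometries of the same (non-elliptic) type share the same fixed-point set at infinity to transfer the invariant set from $b_0$ to $b'$, and conclude by (bounded) generation of $G$ by $A\subset B$. The paper treats the loxodromic and parabolic cases uniformly by referring to "the set $\partial$ of points at infinity fixed by $g$" and using the equality of cardinalities; you spell out the two cases separately, which is merely a presentational difference.
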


\begin{proof}
Suppose that there is some element $a \in B$ which is not elliptic. Let $\partial$ be the set of points at infinity fixed by $a$ (so $\partial$ has cardinality one if $a$ is parabolic, or two if $a$ is loxodromic). Given any other element $b \in B$, we claim that $b$ stabilises $\partial$. 

\medskip \noindent
By assumption, we know that there exist $g,h \in G$ such that $$[gag^{-1},a]=[gag^{-1},hgag^{-1}h^{-1}]=[hgag^{-1}h^{-1},b]=1.$$ For convenience, set $\bar{a}=gag^{-1}$. Notice that, as $a$ and $\bar{a}$ are conjugate and commute, the sets of points at infinity fixed by $a$ and $\bar{a}$ coincide, ie., $\partial$ is also the set of points at infinity fixed by $\bar{a}$. Similarly, as $\bar{a}$ and $h\bar{a}h^{-1}$ are conjugate and commute, $\partial$ coincides with the set of points at infinity fixed by $h \bar{a} h^{-1}$. Next, because $b$ and $h\bar{a}h^{-1}$ commute, it follows that $b$ has to stabilise $\partial$, concluding the proof of our claim. 

\medskip \noindent
Since $G$ is generated by $B$ (as $B$ contains the generating set $A$), it follows that $G$ stabilises $\partial$. Consequently, $G$ fixes a point at infinity or stabilises a pair of points at infinity.
\end{proof}


\begin{lemma}\label{lem:Vboundedorbit}
Let $G$ act on some $\delta$-hyperbolic graph $X$. If the action does not fix a point at infinity nor stabilises a pair of points at infinity, and if all the elements of $B$ are elliptic, then $G$ has bounded orbits.
\end{lemma}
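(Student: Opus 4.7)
\textit{Proof plan.} The plan is to argue by contradiction, assuming the $G$-action has unbounded orbits. Since $G$ does not fix a point at infinity, the classification of isometric actions on Gromov-hyperbolic spaces forces $G$ to contain a loxodromic element $g$; by replacing it with a suitable power we may assume $[g] \geq 525\delta$ so that Lemma~\ref{lem:hypgeom} applies on $\mathrm{axis}(g)$. Moreover, either $G$ is non-elementary --- in which case we may arrange a second loxodromic $g' \in G$ whose axis is as far as we like from $\mathrm{axis}(g)$ --- or $G$ stabilises a pair of points at infinity without fixing either; the latter case I would handle separately, reducing via the index-two subgroup that fixes both points and acts on the preserved quasi-line.

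In the non-elementary case, I would invoke the third hypothesis of Theorem~\ref{thm:maincriterion} with $a := g$ and $b := g'$, producing elements $h, h_1, \ldots, h_r \in B$ such that, for every $k \in A$, there exists $f \in F := \langle h_1 \rangle \cdots \langle h_r \rangle$ with $fkf^{-1}h$, $fkf^{-1}hg$ and $fkf^{-1}hg'$ all in $B$, hence elliptic by hypothesis. Writing $x := fkf^{-1}h$, two applications of Lemma~\ref{lem:hypgeom} produce a point $y \in \mathrm{axis}(g)$ with $d(y, xy) \leq C$ and a point $y' \in \mathrm{axis}(g')$ with $d(y', xy') \leq C$, where $C := 8M(\delta,1,62\delta)+243\delta$. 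Choosing the two axes sufficiently far apart, the quasi-convexity of the displacement function of an isometry of a $\delta$-hyperbolic space --- a consequence of the thinness of triangles, that can be extracted from Corollary~\ref{cor:distproj} and Lemma~\ref{lem:brokengeod} --- forces $x$ to have displacement at most $C + \mathcal{O}(\delta)$ all along a geodesic from $y$ to $y'$, in particular at the midpoint $x_0$ of the bridge between the two axes. Since $x_0$ depends only on $g$ and $g'$, and not on $k$, this yields a uniform bound $d(x_0, xx_0) \leq C'$ for every $k \in A$.

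The final step converts this into a uniform displacement bound on $k$ itself. Writing $u := f^{-1}x_0$ and $v := f^{-1}hx_0$, the triangle inequality gives
\[
d(u, ku) \;\leq\; d(u, kv) + d(u,v) \;=\; d(f^{-1}x_0, kf^{-1}hx_0) + d(x_0, hx_0) \;\leq\; C' + d(x_0, hx_0),
\]
and since $h$ is a fixed element of $B$ independent of $k$, the last term is a fixed constant. Thus $d(u_k, ku_k)$ is uniformly bounded in $k \in A$. The main obstacle is then to show that the base-points $u_k = f_k^{-1} x_0$ stay in a bounded neighborhood of a single reference point as $k$ varies: this requires a careful analysis of how $f \in \langle h_1 \rangle \cdots \langle h_r \rangle$ depends on $k$, exploiting that each $h_i$ is a specific elliptic element of $B$ whose behaviour near the bridge region is controlled (in the Thompson setting, the $h_i$'s all share large common fixed dyadic intervals). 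Once such uniform control is achieved, the bounded generation of $G$ by $A$ (first hypothesis) immediately yields $d(x_1, g''x_1) \leq NR$ for every $g'' \in G$ at a fixed base-point $x_1$, contradicting the working assumption of unbounded orbits and completing the proof.
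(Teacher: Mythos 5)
Your plan retraces the paper's argument step for step up to the final estimate: contradiction, two independent loxodromics with long translation length, the third hypothesis with $a=g_1$, $b=g_2$, two applications of Lemma~\ref{lem:hypgeom} on the two axes, propagation of the displacement bound by convexity to a fixed point $x_0$ near $\mathrm{proj}_{\ell_2}(\ell_1)$, and then unwinding the conjugation. (The axes do not need to be arranged ``as far apart as we like''; the paper works with two arbitrary independent loxodromics.)

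The one genuine gap is exactly the step you flag at the end, and your proposed fix points in the wrong direction. You do not need any Thompson-specific information about the $h_i$ (shared fixed intervals, behaviour near the bridge region, and so on); such an argument would not fit the general criterion that this lemma is meant to establish. Everything follows from the mere ellipticity of the $h_i$, which is already part of the hypothesis. Since each $h_i\in B$ is elliptic, \cite[Lemma~III.$\Gamma$.3.3]{BH} gives a point $x_i\in X$ with $\mathrm{diam}\bigl(\langle h_i\rangle\cdot x_i\bigr)\le 17\delta$. For any $f=h_1^{n_1}\cdots h_r^{n_r}\in F=\langle h_1\rangle\cdots\langle h_r\rangle$, the triangle inequality yields
$$d(x_0,fx_0)\ \le\ \sum_{i=1}^r d\bigl(x_0,h_i^{n_i}x_0\bigr)\ \le\ \sum_{i=1}^r\Bigl(2\,d(x_0,x_i)+d\bigl(h_i^{n_i}x_i,x_i\bigr)\Bigr)\ \le\ 2r\max_{1\le i\le r}d(x_0,x_i)+17r\delta,$$
a bound that does not depend on which $k\in A$ produced $f$. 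In particular $d(x_0,u_k)=d(x_0,f_k^{-1}x_0)=d(x_0,f_kx_0)$ is uniformly bounded, and combining this with your bound on $d(u_k,ku_k)$ gives a uniform bound on $d(x_0,kx_0)$ over all $k\in A$; bounded generation then finishes the proof exactly as you describe. As written, your proposal stops short of this decisive estimate and gestures at a route that would not survive the generality of Theorem~\ref{thm:maincriterion}.
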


\begin{proof}
Suppose by contradiction that $G$ has unbounded orbits and does not fix any point at infinity. As a consequence, there exist two independent loxodromic isometries $g_1,g_2 \in G$ (see \cite[Paragraph 8.2.E]{GromovHyp}). Fix two axes $\ell_1,\ell_2$ of $g_1,g_2$ respectively. Let $h, h_1,\ldots,h_r \in B$ be the elements given in the statement of Theorem \ref{thm:maincriterion}. By assumptions, the $h_i$'s are elliptic, so, as a consequence of \cite[Lemma III.$\Gamma$.3.3]{BH}, for every $1 \leq i \leq r$ there exists some $x_i \in X$ such that the orbit $\langle h_i \rangle \cdot x_i$ has diameter at most $17\delta$. Set $F= \langle h_1 \rangle \cdots \langle h_r \rangle$. 

\begin{claim}\label{claim:reducibleboundedorbit}
Fix a basepoint $x_0 \in X$. For every element $k \in A$,
$$d(x_0,k \cdot x_0) \leq  (19+8r)D+\Delta+ 4(2+17r)\delta$$
where 
$$D= \max \left\{ d(x_0,x_1), \ldots, d(x_0,x_r), d(x_0, \mathrm{proj}_{\ell_2}(\ell_1))+\mathrm{diam}(  \mathrm{proj}_{\ell_2}(\ell_1)),d(x_0,hx_0) \right\}$$ 
and $M= M(\delta,1,62\delta)$ is the Morse constant.
\end{claim}

\noindent
Fix some $k \in A$. By assumption, there exists some $f \in F$ such that $fkf^{-1}h$, $fkf^{-1}hg_1$ and $fkf^{-1}hg_2$ all belong to $B$. As a consequence, they are elliptic isometries. It follows from Lemma \ref{lem:hypgeom} that there exist points $x_1 \in \ell_1$ and $x_2 \in \ell_2$ which are moved within distance at most $\Delta:= 8 M+243\delta$ by $fkf^{-1}h$. Fix a geodesic $[x_1,x_2]$ in $X$ between $x_1$ and $x_2$. As a consequence of Lemma \ref{lem:geodinterproj}, $[x_1,x_2]$ intersects the $M$-neighborhood of the nearest-point projection of $\ell_1$ onto $\ell_2$. Let $x$ be a point which belongs to this intersection. By $8\delta$-convexity of the metric (see \cite[Corollary 10.5.3]{CDP}), $fkf^{-1}h$ moves $x$ within distance at most $\Delta +8\delta$. On the other hand,
$$\begin{array}{lcl} d(fkf^{-1}h \cdot x,x) & = & d( f^{-1}hf k \cdot f^{-1}hx,f^{-1}hx) \\ \\ & \geq & d(f^{-1}hf k \cdot x,x)-2d(f^{-1}h \cdot x,x) \\ \\ & \geq  &d(k \cdot x,f^{-1} h^{-1}f \cdot x) - 2d(f^{-1}h \cdot x,x) \\ \\ & \geq & d(k \cdot x,x) - d(f^{-1}h^{-1}f \cdot x,x) - 2d(f^{-1}h \cdot x,x) \end{array}$$
Consequently,
$$\begin{array}{lcl} d(x,k \cdot x) & \leq & d(f^{-1}h^{-1}f \cdot x,x) + 2d(f^{-1}h \cdot x,x)+\Delta +8\delta \\ \\ & \leq & 4d(x,fx) +3 d(x,hx)+\Delta+8\delta \\ \\ & \leq & 4d(x_0,fx_0)+3 d(x_0,hx_0) + 14 d(x_0,x)+ \Delta +8\delta \end{array}$$
By combining the observation that 
$$d(x_0,x) \leq d(x_0, \mathrm{proj}_{\ell_2}(\ell_1)) +\mathrm{diam}(  \mathrm{proj}_{\ell_2}(\ell_1)) \leq D$$
together with the next claim, we deduce that
$$\begin{array}{lcl} d(x_0,k \cdot x_0) & \leq & d(x,k \cdot x)+2 d(x,x_0) \leq 4 \cdot r(2D+17\delta)+19D +\Delta+8\delta \\ \\ & \leq & (19+8r)D+\Delta+ 4(2+17r)\delta \end{array}$$
concluding the proof of our claim.

\begin{claim}
For every $f \in F$, $d(x_0, fx_0) \leq r(2D+17\delta)$. 
\end{claim}

\noindent
Write $f=h_1^{n_1} \cdots h_r^{n_r}$ for some $n_1, \ldots, n_r \in \mathbb{Z}$. Then
$$\begin{array}{lcl} d(x_0,fx_0) & \leq & \displaystyle \sum\limits_{i=1}^r d(h_i^{n_i}x_0,x_0) \leq \sum\limits_{i=1}^r \left( d(h_i^{n_i}x_i,x_i) + 2d(x_i,x_0) \right) \\ \\ & \leq & \displaystyle 2r \max\limits_{1 \leq i \leq r} d(x_0,x_i) + \sum\limits_{i=1}^r d(h_i^{n_i}x_i,x_i) \\ \\ & \leq & 2r \max\limits_{1 \leq i \leq r} d(x_0,x_i) + 17r \delta  \end{array}$$
This proves the claim. 

\medskip \noindent
Now, we are ready to conclude the proof of our lemma. Indeed, by combining Lemma \ref{lem:reduciblegenerate} with Claim \ref{claim:reducibleboundedorbit}, it follows that there exists some constant $K$ such that $d(gx_0,x_0) \leq K$ for every $g \in G$. Otherwise saying, $G$ has a bounded orbit, contradicting our starting hypothesis. 
\end{proof}

\begin{proof}[Proof of Theorem \ref{thm:maincriterion}.]
Let $X$ be a hyperbolic graph on which $G$ acts. If the action fixes a point at infinity or stabilises a pair of points at infinity, we are done. Otherwise, it follows from Lemma \ref{lem:reducibleelliptic} that the elements of $B$ must be elliptic, and we conclude from Lemma \ref{lem:Vboundedorbit} that $G$ has a bounded orbit. Thus, we have proved the desired statement for hyperbolic graphs. But the general case reduces to hyperbolic graphs according to Lemma \ref{lem:hypgraph}, so the proof is concluded. 
\end{proof}

\noindent
Now, we are ready to prove that Thompson's group $V$ is hyperbolically elementary.

\begin{thm}\label{thm:VhypRigid}
Any isometric action of Thompson's group $V$ on a Gromov-hyperbolic space either fixes a unique point at infinity or has a bounded orbit.
\end{thm}

\begin{proof}
We claim that $V$ satisfies the hypotheses of Theorem \ref{thm:maincriterion} if $B$ denotes the set of reducible elements and $A$ the set of reducible elements of thickness at least $1/8$. The first item of Theorem \ref{thm:maincriterion} is a direct consequence of Lemma \ref{lem:reduciblegenerate}.

\medskip \noindent
Next, let $a,b \in V$ be two reducible elements. If the supports of $a$ and $b$ do not cover the Cantor set $\mathfrak{C}$, set $g=1$. Otherwise, if the supports of $a$ and $b$ cover $\mathfrak{C}$, there exist two disjoint dyadic intervals $I$ and $J$ on which $a$ and $b$ respectively are the identity. According to Lemma \ref{lem:conjdisjointsupp}, there exists some element $g \in V$ such that the support of $gag^{-1}$ is included into $I$. Now, the point is that $a$ and $gag^{-1}$ commute (since they have disjoint supports) and that $gag^{-1}$ and $b$ are both the identity on some dyadic interval $J$. Again according to Lemma \ref{lem:conjdisjointsupp}, there exists some $h \in V$ such that the support of $hgag^{-1}h^{-1}$ is included into $J$. By construction, the support of $hgag^{-1}h^{-1}$ is disjoint from the supports of $b$ and $gag^{-1}$, so that $hgag^{-1}h^{-1}$ has to commute with both $b$ and $gag^{-1}$. This proves the second item of Theorem \ref{thm:maincriterion}.


\medskip \noindent
Finally, Fix two elements $a,b \in V$. Let $h \in V$ and $I \subset (0,1)$ be the element of $V$ and the dyadic interval given by Lemma \ref{lem:simult}, and let $h_1, \ldots, h_8 \in V$ be the elements given by Lemma \ref{lem:bigF} for $I$ and $\epsilon=1/8$. Set $F= \langle h_1 \rangle \cdots \langle h_8 \rangle$. Given a reducible element $k \in V$ of thickness at least $1/8$, we deduce from Lemma \ref{lem:bigF} there exists some $f \in F$ such that the support of $fkf^{-1}$ is included into $I$. It follows from Lemma \ref{lem:simult} that $fkf^{-1}h$, $fkf^{-1}ha$ and $fkf^{-1}hb$ are all reducible elements. This proves the third item of Theorem \ref{thm:maincriterion}.

\medskip \noindent
Therefore, Theorem \ref{thm:maincriterion} applies, proving that that any isometric action of $V$ on a hyperbolic space fixes a unique point at infinity, or stabilises a pair of points at infinity, or has a bounded orbit. It remains to show that $V$ cannot stabilise a pair of points at infinity, or equivalently: 

\begin{claim}
Any isometric action of $V$ on a quasi-line must have a bounded orbit. 
\end{claim}

\noindent
Since an action on a quasi-line with an unbounded orbit produces a \emph{quasi-morphism} (ie., a map $\varphi : V \to \mathbb{R}$ satisfying the following conditions: there exists a constant $D \geq 0$ such that $|\varphi(gh) - \varphi(g)- \varphi(h)| \leq D$ for every $g,h \in V$) which is \emph{unbounded} (ie., such that $\varphi(V)$ is not bounded in $\mathbb{R}$), it is sufficient to show that any quasi-morphism of $V$ is necessarily bounded. We refer to \cite{CalegariScl} for more information on quasi-morphisms. The last observation is a straightforward consquence of the fact that $V$ is \emph{uniformly simple} \cite[Corollary 6.6]{VuniformSimple}, meaning that there exists a constant $N \geq 1$ such that, for every non-trivial elements $f,g \in V$, $f$ can be written as a product of at most $N$ conjugates of $g$ or $g^{-1}$. 
\end{proof}

\section{Cubical rigidity}\label{section:cubicalrigidity}

\noindent
Our last section is dedicated to cubical rigidity, and more precisely, how to deduce it from hyperbolic rigidity. 

\begin{thm}\label{thm:FWinfty}
A finitely generated group all of whose finite-index subgroups 
\begin{itemize}
	\item are hyperbolically elementary,
	\item and do not surject onto $\mathbb{Z}$,
\end{itemize}
satisfies Property $(FW_{\infty})$.
\end{thm}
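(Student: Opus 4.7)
The plan is to prove Property $(FW_n)$ by induction on $n$; the base case $n=0$ is trivial as a connected $0$-dimensional CAT(0) cube complex is a single vertex, and the hypotheses are inherited by finite-index subgroups (a finite-index subgroup of a finite-index subgroup is finite-index in $G$), so the inductive assumption will be available for any finite-index subgroup encountered.

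For the inductive step, let $G \curvearrowright X$ be an action on an $n$-dimensional CAT(0) cube complex. First I would consider the induced action on the hyperbolic space $(X,\delta_L)$ of Theorem~\ref{thm:hypmodel}, for a suitable integer $L$. Hyperbolic elementarity of $G$ dichotomises this action into either bounded orbits on $(X,\delta_L)$ or a fixed point at infinity; in the second case, Corollary~\ref{cor:fixedpointXL} produces a cube $C \subset \mathfrak{R}X$ stabilised by $G$, and the kernel of the induced finite action $G \to \mathrm{Aut}(C)$ is a finite-index subgroup $H \leq G$ fixing each vertex of $C$. When $\dim C \geq 1$, adjacent vertices of $C$ are ultrafilters differing on exactly one hyperplane of $X$, so fixing both vertices forces $H$ to fix each of the $\dim C$ coordinate hyperplanes setwise; each such hyperplane is itself a CAT(0) cube complex of dimension strictly less than $n$, and the inductive hypothesis applied to $H$ produces an $H$-fixed vertex lying on such a hyperplane. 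Finiteness of $[G:H]$ makes the $G$-orbit of this vertex finite, hence bounded, so $G$ stabilises a cube of $X$ and thus fixes a point.

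The main obstacle is the case $\dim C = 0$, in which $G$ fixes a single ultrafilter $\xi \in \mathfrak{R}X$ and no larger cube. Here the no-surjection-onto-$\mathbb{Z}$ hypothesis enters via a Busemann-type homomorphism $\phi_\xi \colon G \to \mathbb{Z}$ defined by $\phi_\xi(g) = |A_\xi(gx_0) \setminus A_\xi(x_0)| - |A_\xi(x_0) \setminus A_\xi(gx_0)|$, where $A_\xi(x)$ denotes the set of hyperplanes separating $x$ from $\xi$; the symmetric difference is contained in the finite set of hyperplanes separating $x_0$ from $gx_0$, making the expression well-defined, and the cocycle identity together with $g\xi = \xi$ makes it a homomorphism. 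The hypothesis forces $\phi_\xi = 0$, and the plan is to combine the vanishing of such Busemann homomorphisms at all $G$-fixed (or $H$-fixed, for a further finite-index subgroup) points in $\mathfrak{R}X$ with the cube structure to enlarge the $G$-fixed set in $\mathfrak{R}X$ to a cube of positive dimension, reducing to the case treated above. The companion scenario of bounded orbits on $(X,\delta_L)$ for every $L$ corresponds to the absence of any contracting isometry of $X$, and would be handled by a Caprace--Sageev-style reduction to the essential core and its irreducible factors, on which the inductive hypothesis applies.
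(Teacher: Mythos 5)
The overall skeleton (induction on dimension, passing to the hyperbolic model $(X,\delta_L)$, exploiting a fixed point in $\mathfrak{R}X$) matches the paper, and your hyperplane-stabilisation trick in the case $\dim C\geq 1$ is a correct and somewhat more elementary alternative to what the paper does there. The genuine gap is the case $\dim C=0$, and you flag it yourself by calling it a ``plan.'' Knowing that the single Busemann-type homomorphism $\phi_\xi$ vanishes tells you that no element translates in the $\xi$-direction on average, but it gives you neither bounded orbits nor a larger $G$-invariant cube in $\mathfrak{R}X$: a group can fix a single ultrafilter and no more, have $\phi_\xi\equiv 0$, and still act with unbounded orbits a priori. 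What the paper actually invokes at exactly this point is a structural theorem (cited as~\cite[Theorem~B.1]{CIF}): if a finite-index subgroup $H$ fixes a vertex of $\mathfrak{R}X$, then $H$ \emph{virtually surjects onto $\mathbb{Z}^k$ for some $k\leq\dim X$ with locally elliptic kernel}. The no-surjection-onto-$\mathbb{Z}$ hypothesis then kills all $k$ Busemann directions at once, forcing $k=0$, so $H$ is (virtually) locally elliptic and hence, being finitely generated, elliptic. That last step from local ellipticity to a bounded orbit is precisely the content you are missing; a single vanishing homomorphism, or even the vanishing of every such homomorphism at every fixed ultrafilter, does not obviously produce it, and I see no way to ``enlarge the fixed cube'' from this data.

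A secondary, smaller issue: in your companion scenario where $(X,\delta_L)$ has bounded orbits for every $L$, you wave at a Caprace--Sageev reduction to the essential core. That reduction (\cite[Prop.~3.5]{CapraceSageev}) is only available when $G$ has no fixed point in the \emph{visual} boundary of $X$, which is why the paper treats the visual-boundary case separately first (via \cite[Prop.~2.26]{CIF}, again funnelling into~\cite[Thm.~B.1]{CIF}). Your outline as written skips that case and would need to account for it before invoking essentiality or irreducible decomposition.
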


\begin{proof}
We want to prove by induction that, for every $n \geq 0$, a group of all whose finite-index subgroups are hyperbolically elementary and do not surject onto $\mathbb{Z}$ satisfies Property $(FW_n)$. For $n=0$, there is nothing to prove; so suppose that our statement is true for some $n \geq 0$, and fix a group $G$, all of whose finite-index subgroups are hyperbolically elementary and do not surject onto $\mathbb{Z}$, acting on a $(n+1)$-dimensional CAT(0) cube complex $X$. 

\medskip \noindent
Suppose first that $G$ fixes a point at infinity in $X$ (ie., in the visual boundary). It follows from \cite[Proposition 2.26]{CIF} that $G$ contains a finite-index subgroup $H$ which stabilises a cubical component $Y \subset \mathfrak{R}X$. As $\dim(Y)<\dim(X)$ according to Lemma \ref{lem:dimcubicalcomp}, our induction hypothesis implies that $H$ fixes a point of $Y$, so that $H$ fixes a point in $\mathfrak{R}X$; up to taking a finite-index subgroup of $H$, we may suppose without loss of generality that $H$ fixes a vertex in the Roller boundary. It follows from \cite[Theorem B.1]{CIF} that $H$ virtually surjects onto a free abelian group of rank $k \leq \dim(X)$ with a kernel which is locally elliptic (in $X$). Since the finite-index subgroups of $G$ do not surject onto $\mathbb{Z}$, necessarily $k=0$, so $H$ is virtually locally elliptic, and finally elliptic since $H$ is finitely generated. We conclude that $G$ has a bounded orbit in $X$, and finally that it fixes a point. Notice that we have proved the following statement, which we record for future use:

\begin{fact}\label{fact:interiorfromboundary}
If $G$ contains a finite-index subgroup fixing a vertex of $\mathfrak{R}X$ then $G$ has to fix a point of $X$.
\end{fact}

\noindent
From now on, suppose that $G$ does not fix a point at infinity in $X$. According to \cite[Proposition 3.5]{CapraceSageev}, up to taking a convex subcomplex of $X$, we may suppose that the action is essential. If $X$ is bounded, then $G$ fixes a point, so suppose that $X$ is unbounded. As a consequence of \cite[Proposition 2.6]{CapraceSageev}, $X$ decomposes as a product of irreducible CAT(0) cube complexes $X_1 \times \cdots \times X_r$ and $G$ contains a finite-index subgroup $H$ lying in $\mathrm{Isom}(X_1) \times \cdots \times \mathrm{Isom}(X_r)$. If $r \geq 2$ (ie., if $X$ is reducible), then $\dim(X_i) < \dim(X)$ for every $1 \leq i \leq r$, so that our induction hypothesis implies that all the induced actions $H \curvearrowright X_i$ have global fixed points. Consequently, $H$ fixes a point in $X$, and it follows that $G$ has a bounded orbit in $X$, and finally that it fixes a point. 

\medskip \noindent
From now on, suppose that $X$ is irreducible. It follows from \cite[Theorem 6.3]{CapraceSageev} that $G$ contains a contracting isometry of $X$, so that Theorem \ref{thm:hypmodel} implies that there exists some $L \geq 0$ such that $G$ acts on the hyperbolic space $(X,\delta_L)$ defined in Section \ref{section:cubecomplexes} with a loxodromic isometry. Because $G$ is hyperbolically elementary, it must contain a subgroup $G'$ of index at most two which fixes a point at infinity in $(X,\delta_L)$. It implies, according to Corollary \ref{cor:fixedpointXL}, that $G'$ stabilises a cube in the Roller boundary of $X$, so that some finite-index subgroup $H \subset G$ fixes a vertex in $\mathfrak{R}X$. We conclude from Fact \ref{fact:interiorfromboundary} that $G$ fixes a point in $X$. 

\medskip \noindent
Thus, we have proved that $G$ necessarily fixes a point of $X$. This concludes the proof of our theorem. 
\end{proof}

\begin{proof}[Proof of Corollary \ref{cor:FWforV}.]
First of all, since $V$ is a finitely generated simple group, it does not contain any proper finite-index subgroup. We also know from Theorem \ref{thm:VhypRigid} that $V$ is hyperbolically elementary, and, once again because $V$ is a simple group, it does not surject onto $\mathbb{Z}$. Consequently, Theorem \ref{thm:FWinfty} applies, implying that $V$ satisfies Property~$(FW_{\infty})$. 
\end{proof}

\noindent
We conclude this section by an example, which was communicated to us by Pierre-Emmanuel Caprace, showing that the property of being hyperbolically elementary is not stable under taking finite-index subgroups. Consequently, in order to apply Theorem \ref{thm:FWinfty}, we really need to check the hyperbolic elementarity for all the finite-index subgroups of the group we are looking at. 

\begin{ex}\label{ex:Caprace}
Let $H$ be a non-elementary hyperbolic group, say a free group of rank two. Then the wreath product $H \wr \mathbb{Z}_2 = (H \times H) \rtimes \mathbb{Z}_2$ contains the group $H \times H$ as a finite-index subgroup which is not hyperbolically elementary, but turns out to be hyperbolically elementary itself as justified by the argument below.

\medskip \noindent
Let $H \wr \mathbb{Z}_2$ act on a Gromov-hyperbolic space $X$. For convenience, let $H_1$ (resp. $H_2$) denote the first copy of $H$ (resp. the second copy of $H$) in the decomposition $H \wr \mathbb{Z}_2 = (H \times H) \rtimes \mathbb{Z}_2$. Notice that, if $H_1$ has a bounded orbit, then so does $H_2$ since $H_1$ and $H_2$ are conjugate. Consequently, if $H_1$ has a bounded orbit then so does $H \wr \mathbb{Z}_2$. From now on, we suppose that $H_1$ has unbounded orbits. 

\medskip \noindent
Suppose that $H_1$ contains two independent loxodromic isometries $f$ and $h$. Let $\partial$ denote the union of the two pairs of points at infinity stabilised by $f$ and $h$. If $f'$ denote the element of $H_2$ conjugate to $f$, then $f'$ has to stabilise $\partial$ as it commutes with both $f$ and $g$. It implies that $f'$ must be elliptic, which is impossible since it is conjugate to $f$ which loxodromic. Thus, $H_1$ cannot contain two independent loxodromic isometries. 

\medskip \noindent
It follows that $H_1$ fixes a unique point at infinity $\xi$ or stabilises a pair of points at infinity $\{\xi_1,\xi_2\}$; set $\partial = \{\xi\}$ or $\partial = \{\xi_1,\xi_2\}$ depending on the situation. Notice that, for every $h \in H_2$, $h \partial$ is also stabilised by $H_1$ since $h$ commutes with any element of $H_1$. On the other hand, $H_1$ does not have a bounded orbit so it cannot stabilise a subset of cardinality at least three in the boundary of $X$, hence $h \partial = \partial$ for every $h \in H_2$. Therefore, $\partial$ is stabilised by both $H_1$ and $H_2$. It follows that $H \wr \mathbb{Z}_2$ stabilises a finite set in $\partial X$. If this subset has cardinality at least three, then $H \wr \mathbb{Z}_2$ has a bounded orbit; otherwise, $H \wr\mathbb{Z}_2$ fixes a point at infinity or stabilises a pair of points at infinity.

\medskip \noindent
Thus, we have proved that $H \wr \mathbb{Z}_2$ is hyperbolically elementary. 
\end{ex}

\addcontentsline{toc}{section}{References}

\bibliographystyle{alpha}
\bibliography{HypAndCubRigidityV}

\end{document}